\documentclass[hidelinks,onefignum,onetabnum]{siamart220329}
\usepackage{mathtools}
\usepackage{algorithm}
\usepackage{algorithmic}
\usepackage{subfigure}
\usepackage{enumitem}
\usepackage{amsmath}

\usepackage{microtype}

\makeatletter
\def\cref@override@label@type#1\@nil#2{#1}
\makeatother

\newlist{questions}{enumerate}{1}
\setlist[questions]{label=(Q\arabic*), leftmargin=*, align=left}

\usepackage{lipsum}
\usepackage{amsfonts}
\usepackage{graphicx}
\usepackage{epstopdf}
\usepackage{algorithmic}
\ifpdf
  \DeclareGraphicsExtensions{.eps,.pdf,.png,.jpg}
\else
  \DeclareGraphicsExtensions{.eps}
\fi

\newsiamremark{remark}{Remark}
\newsiamremark{hypothesis}{Hypothesis}
\crefname{hypothesis}{Hypothesis}{Hypotheses}
\newsiamthm{claim}{Claim}

\headers{Parallelizable Splitting Scheme for NS-Biot System}{}

\title{Error Analysis of the Explicit Splitting Scheme for Fluid-Poroelastic Structure Interaction Problems \thanks{Submitted to the editors DATE.
\funding{\v{C}ani\'{c}'s research has been supported in part by the
National Science Foundation under grants DMS-2408928, DMS-2247000 and by the  U.S. Department of Energy, Office of Science, Office of Advanced Scientific Computing Research's Applied Mathematics Competitive Portfolios program under Contract No. AC02-05CH11231. Yifan Wang’s research has been supported in part by the National Science Foundation under grant DMS-2247001 and by a Simons Foundation Travel Award.}}}

\author{
Yifan Wang \thanks{Department of Mathematics and Statistics, Texas Tech University, Lubbock, TX, USA (\email{yifan.wang@ttu.edu}).}
\and 
Jeonghun Lee \thanks{Department of Mathematics, Baylor University, Waco, TX, USA (\email{Jeonghun\_Lee@baylor.edu}).}
\and 
Sun\v{C}ica \v{C}ani\'{c} \thanks{\textbf{Corresponding author.} Department of Mathematics, University of California, Berkeley, Berkeley, CA, USA (\email{canics@berkeley.edu}).}
}

\usepackage{amsopn}

\allowdisplaybreaks[4]
\ifpdf
\hypersetup{
  pdftitle={Error Analysis of the Explicit Splitting Scheme for Fluid-Poroelastic Structure Interaction Problems},
  pdfauthor={Yifan Wang, Jeonghun Li, Sun\v{C}ica \v{C}ani\'{c}}
}
\fi

\newcommand{\sunny}{\color{black}}

\begin{document}

\maketitle
\tableofcontents

\begin{abstract}
We present \emph{a priori} error analysis for a fully discrete, parallelizable, explicit loosely coupled scheme for the time-dependent Stokes-Biot problem. The method decouples the fluid and poroelastic subproblems in a fully explicit fashion, allowing each problem to be solved independently at each time step, {\sunny{with a consistent treatment of the interface conditions that provides stability and convergence of the scheme.}} 
The {\sunny{error}}  analysis is carried out in a discrete energy framework. {\sunny{More specifically, we introduce}} Ritz-type projections in each subdomain, {\sunny{and subtract}} the fully discrete scheme from the time-discrete continuous formulation. {\sunny{This yields}} reduced error equations in which the dominant interpolation contributions cancel. The remaining consistency terms stem primarily from time discretization residuals and lagged interface data inherent to the explicit splitting. 
{\sunny{The main result of this manuscript is the derivation of a discrete error energy identity, and establishment of unconditional error estimates in a combined energy-dissipation norm via a Gronwall--type argument. These estimates demonstrate first-order accuracy in time and optimal spatial convergence rates, as determined by the degree of the finite element polynomials.}}
Numerical experiments based on a manufactured solution corroborate the theory, confirming first-order temporal convergence for all variables, and spatial convergence orders consistent with the chosen approximation spaces.
\end{abstract}
\begin{keywords}
Fluid-poroelastic structure interaction, Stokes-Biot problem, A priori error estimates, Explicit splitting scheme.
\end{keywords}

\begin{MSCcodes}
65M22,	65M60, 74F10, 76D05, 76S05
\end{MSCcodes}

\section{Introduction}

Coupled {\sunny{fluid-poroelastic structure interaction problems occur}} across a broad spectrum of applications, including bioartificial organ design, perfusion through soft biological tissues, and fluid injection in deformable porous materials \cite{Buka202404,fluids7070222,Benjamin2014,Martina2014,CanicSiam2021}.
From a mathematical standpoint, these processes are governed by a coupling between the incompressible {\sunny{time-dependent}} Stokes equations, which describe the free-fluid flow, and the Biot poroelasticity equations, which capture the mechanics of the saturated solid matrix \cite{biot1941general,biot1955theory}.
This coupled system, commonly referred to as the \emph{Stokes–Biot model}, has been the subject of extensive theoretical and numerical investigation, forming the foundation for numerous partitioned and monolithic computational formulations \cite{ambartsumyan2018lagrange,wen2020strongly,guo2022decoupled,cesmelioglu2017analysis,CanicBook,BociuMulti, SingularLimit, guo2025uncond, Andrew2026,Yotov2023,Mortar,NitCoup,MultiLayer,Martina2014,RobRob,MACscheme}.

In the coupled setting, the primary difficulty lies in the enforcement of
interface conditions expressing continuity of normal flux, and balance
of normal and tangential stresses across the interface separating the
fluid and poroelastic domains.
Monolithic methods, which solve both subproblems simultaneously,
are stable and accurate but computationally expensive.
Partitioned or splitting schemes, on the other hand, provide modularity
and enable independent solution of the subproblems,
but they require appropriate stabilization to guarantee convergence.
Nitsche-type coupling techniques have proven highly effective for this
purpose because they impose the interface conditions weakly and
symmetrically, allowing for nonmatching meshes and full parallelization
\cite{BADIA20097986,BUKAC2015138,MartinaOyekole,cesmelioglu2016optimization}.

{\sunny{Recently, using Nitche-type interface approaches, we developed a first fully discrete parallel splitting scheme for the transient Stokes-Biot problem in the context of both linear and nonlinearly--coupled problems \cite{24M1695713}.}} 
%
{\sunny{In this scheme,}} the fluid and poroelastic subproblems are solved
independently at each time step, enabling complete parallelization across the interface.
The coupling is realized through Nitsche-type interface terms weighted
by penalty parameters $\gamma>0$ (for tangential velocity continuity)
and $L>0$ (for normal velocity and pressure continuity). Moreover, the pore pressure is employed via the coupling conditions to replace the stress terms, thereby enhancing the stability of the system.
{\sunny{While this scheme has been shown in \cite{24M1695713} to be unconditionally stable, there has been no error analysis. 

In the present manuscript, we present a comprehensive error analysis. More specifically,}} 
we establish a rigorous \emph{a~priori error estimate} that demonstrates
first-order accuracy in time and optimal $O(h^{k+1})$ spatial accuracy
for finite elements of degree~$k$.

Our analysis employs a discrete energy argument.
Ritz projections are introduced in each subdomain to split the total
error into interpolation and discrete components.
By subtracting the continuous and discrete formulations,
we derive error equations in which the projection errors cancel.
An energy identity is then established, balancing subdomain and
interface contributions.
Through a series of trace and time-difference inequalities and the
discrete Gronwall lemma, we derive the convergence bound
\[
\max_{0\le n\le N} X_n
\;+\;
\Big(\sum_{n=1}^N Y_n^2\Big)^{1/2}
\;\le\;
C \big( h^{k+\frac{r}{2}} + \Delta t \big),
\]
where $X_n$ and $Y_n$ denote the discrete energy and dissipation norms
of the total error at time~$t_n$.

The remainder of the paper is organized as follows.
Section~\ref{sec:continuous} introduces the continuous
Stokes--Biot model and its weak formulation.
Section~\ref{sec:discrete} presents the fully discrete
parallel splitting scheme and the associated finite element spaces.
Section~\ref{sec:projections} defines the projection operators and
states their approximation properties.
Section~\ref{sec:error} derives the discrete error equations and the
main energy identity.
Section~\ref{sec:estimate} provides the detailed
a~priori error estimates and proves the main theorem.
Finally, Section~\ref{sec:numerics} then offers a numerical study demonstrating agreement with the theoretical error bounds.

\section{Continuous Problem}
\label{sec:continuous}

Let $\Omega_f\subset\mathbb{R}^d$ ($d=2,3$) denote the fluid domain
and $\Omega_p\subset\mathbb{R}^d$ the poroelastic (or Biot) domain.
The interface between the two subdomains is denoted by
$\Gamma=\partial\Omega_f\cap\partial\Omega_p$.
We write $\boldsymbol{n}_f$ for the unit outward normal on $\partial\Omega_f$
(so that $\boldsymbol{n}_p=-\boldsymbol{n}_f$ on $\Gamma$), and define the tangential projection
operators
\[
\boldsymbol{P}_f \boldsymbol{v} = \boldsymbol{v} - (\boldsymbol{v}\!\cdot\!\boldsymbol{n}_f)\boldsymbol{n}_f,
\qquad
\boldsymbol{P}_p \boldsymbol{v} = \boldsymbol{v} - (\boldsymbol{v}\!\cdot\!\boldsymbol{n}_p)\boldsymbol{n}_p .
\]
The outer boundaries of $\Omega_f$ and $\Omega_p$ are denoted
by $\Sigma_{D,f}$ and $\Sigma_{D,p}$, respectively, where appropriate
Dirichlet or Neumann data are prescribed.

\subsection{Governing equations}

The coupled Stokes--Biot system combines the time-dependent incompressible Stokes
equations in $\Omega_f$ with the quasi-static Biot equations
in $\Omega_p$:
\begin{subequations}\label{eq:stokes-biot}
\begin{align}
\rho_f \frac{\partial \boldsymbol{u}}{\partial t}
- \nabla\!\cdot \boldsymbol{\sigma}_f
  &= \boldsymbol{f}_f
\quad &&\text{in }\Omega_f\times(0,T),\\[0.3em]
\nabla\!\cdot \boldsymbol{u} &= 0
\quad &&\text{in }\Omega_f\times(0,T),\\[0.3em]
\rho_p \frac{\partial^2\boldsymbol{\eta}}{\partial t^2}
- \nabla\!\cdot \boldsymbol{\sigma}_s
  &= \boldsymbol{f}_p
\quad &&\text{in }\Omega_p\times(0,T),\\[0.3em]
C_0\frac{\partial\phi}{\partial t}
+ \alpha\nabla\!\cdot\!\boldsymbol{\xi}
- \nabla\!\cdot (K\nabla\phi)
  &= g_p
\quad &&\text{in }\Omega_p\times(0,T),\\
\boldsymbol{u}_p
  &= -K\nabla\phi
\quad &&\text{in }\Omega_p\times(0,T).
\end{align}
\end{subequations}
Here
$\boldsymbol{u}$ and $\boldsymbol{P}_f$ are the fluid velocity and pressure,
$\boldsymbol{\eta}$ and
$\boldsymbol{\xi}=\partial_t\boldsymbol{\eta}$
are the structure displacement and velocity respectively.
$\phi$ is the pore pressure and $\boldsymbol{u}_p$ is filtration velocity. The Cauchy stress tensors are defined as:
\begin{equation}\label{eq:stresses}
\boldsymbol{\sigma}_f
  = 2\mu_f \boldsymbol{D}(\boldsymbol{u}) - \boldsymbol{P}_f\boldsymbol{I},
\qquad
\boldsymbol{\sigma}_p
  =2\mu_p \boldsymbol{D}(\boldsymbol{\eta})
    + \lambda_p (\nabla\!\cdot\!\boldsymbol{\eta})\boldsymbol{I}
    - \alpha \phi \boldsymbol{I},
\end{equation}
where
$\boldsymbol{D}(\boldsymbol{v})
  =(\nabla \boldsymbol{v}+\nabla \boldsymbol{v}^{\!\top})/2$
is the strain-rate tensor.
The parameters $\rho_f,\mu_f$ denote the fluid density and viscosity,
$\rho_p,\mu_p,\lambda_p$ the solid density and Lamé constants,
$\alpha$ the Biot--Willis coefficient,
$K$ the hydraulic conductivity tensor,
and $C_0\ge0$ the storage coefficient
\cite{biot1941general,biot1955theory,temam2001navier,richter2017fluid}.

\subsection{Interface conditions}
To couple the fluid model and the Biot model, we impose the following set of coupling conditions on interface $\Gamma$:
\begin{subequations}
\begin{align}
    &(\boldsymbol{\xi} + \boldsymbol{u}_p)\cdot\boldsymbol{n}_f = \boldsymbol{u}\cdot\boldsymbol{n}_f \quad&&\text{on}\ \Gamma\times(0, T), \label{interface1}\\
    &\boldsymbol{\tau}_{f, j}\cdot\boldsymbol{\sigma}_f\boldsymbol{n}_f = -\gamma(\boldsymbol{u} - \boldsymbol{\xi})\cdot\boldsymbol{\tau}_{f, j}\quad \text{for } j = 1,\cdots, d-1 \quad&&\text{on}\ \Gamma\times(0, T),\label{interface2}\\
    &\boldsymbol{n}_f\cdot\boldsymbol{\sigma}_f\boldsymbol{n}_f = -\phi\quad&&\text{on}\ \Gamma\times(0, T), \label{interface3}\\
    &\boldsymbol{\sigma}_f\boldsymbol{n}_f =  \boldsymbol{\sigma}_p\boldsymbol{n}_f\quad&&\text{on}\ \Gamma\times(0, T)\label{interface4},
\end{align}
\label{interface condition}
\end{subequations}
where $\boldsymbol{\tau}_{f, j}$ denotes an orthonormal sets of unit vectors on the tangential plane to $\Gamma$. $\gamma > 0$ denotes the slip rate in the Beavers-Joseph-Saffman (BJS) interface condition \cite{BADIA20097986,BUKAC2015138,ambartsumyan2018lagrange, sun2021domain,chen2011parallel}.

\subsection{Weak formulation}

We introduce the function spaces:
\[
\begin{aligned}
\boldsymbol{V}_f &= \{\,\boldsymbol{v}\in H^1(\Omega_f)^d :
   \boldsymbol{v}=0 \text{ on } \Sigma_{D,f}\,\}, \qquad
& Q_f &= L_0^2(\Omega_f),\\
\boldsymbol{V}_p &= \{\,\boldsymbol{z}\in H^1(\Omega_p)^d :
   \boldsymbol{z}=0 \text{ on } \Sigma_{D,p}\,\}, \qquad
& Q_p &= H^1(\Omega_p).
\end{aligned}
\]
Denoting the $L^2$ inner products on $\Omega_f$ and $\Omega_p$
by $(\cdot,\cdot)_{f}$ and $(\cdot,\cdot)_{p}$,
and the duality pairing on $\Gamma$ by
$\langle\cdot,\cdot\rangle_\Gamma$,
the weak formulation reads:

Find
\[
(\boldsymbol{u}, ~\boldsymbol{P}_f)\in \boldsymbol{V}_f\times Q_f, \qquad
(\boldsymbol{\eta}, ~\boldsymbol{\xi}, ~\phi)
  \in \boldsymbol{V}_p\times \boldsymbol{V}_p\times Q_p,
\]
such that, for almost every $t\in(0,T)$,
\begin{align}
&\rho_f\!
\left(\frac{\partial\boldsymbol{u}}{\partial t}, \boldsymbol{v}\right)_f
+ 2\mu_{f}\left(\boldsymbol{D}(\boldsymbol{u}),\boldsymbol{D}(\boldsymbol{v})\right)_f
- \left(p, \nabla\!\cdot\!\boldsymbol{v}\right)_f + \left(\nabla\!\cdot\!\boldsymbol{u}, q\right)_f\label{coupled}\\
&\quad
+ \left\langle\phi,\,\boldsymbol{v}\!\cdot\!\boldsymbol{n}_{f}\right\rangle_\Gamma
+ \gamma\left\langle \boldsymbol{P}_{f}\!\left(\boldsymbol{u}-\boldsymbol{\xi}\right),
                   \boldsymbol{P}_{f}(\boldsymbol{v})\right\rangle_\Gamma \nonumber\\[0.3em]
&\quad
+ \rho_{p}\left(\frac{\partial\boldsymbol{\xi}}{\partial t},\boldsymbol{\zeta}\right)_p
+ 2\mu_{p}\left(\boldsymbol{D}(\boldsymbol{\eta}),\boldsymbol{D}(\boldsymbol{\zeta})\right)_p
+ \lambda_{p}\left(\nabla\!\cdot\!\boldsymbol{\eta},\,\nabla\!\cdot\!\boldsymbol{\zeta}\right)_p
- \alpha\left(\phi,\,\nabla\!\cdot\!\boldsymbol{\zeta}\right)_p\nonumber\\
&\quad
+ C_0\left(\frac{\partial\phi}{\partial t},\psi\right)_p
+ \alpha\left(\nabla\!\cdot\!\boldsymbol{\xi},\,\psi\right)_p
+ (K\nabla\phi,\,\nabla\psi)_p\nonumber\\[0.3em]
&\quad
- \gamma\left\langle \boldsymbol{P}_{p}\!\left(\boldsymbol{u}-\boldsymbol{\xi}\right),
                     \boldsymbol{P}_{p}(\boldsymbol{\zeta})\right\rangle_{\Gamma}
+ \left\langle (\boldsymbol{\xi}-\boldsymbol{u})\!\cdot\!\boldsymbol{n}_{p},\,\psi\right\rangle_{\Gamma}
+ \left\langle \phi,\,\boldsymbol{\zeta}\!\cdot\!\boldsymbol{n}_{p}\right\rangle_{\Gamma}\nonumber\\[0.4em]
&= (\boldsymbol{f}_{f},\boldsymbol{v})_f
+(\boldsymbol{f}_{p},\boldsymbol{\zeta})_p
+ (g_{p},\psi)_p,
\qquad
\forall (\boldsymbol{v}, q, \boldsymbol{\zeta}, \psi)
\in \boldsymbol{V}_f\times Q_{f}\times \boldsymbol{V}_{p}\times Q_{p}.\nonumber
\end{align}

\noindent
We note that in the above weak formulation, the pore pressure variable $\phi$
serves as the primary unknown for the Darcy equation in the poroelastic domain.
The Darcy filtration velocity $\boldsymbol{u}_p = -K\nabla\phi$
can subsequently be recovered through post-processing if desired. 

In the following derivation, we assume all external force terms are zero for the sake of simplicity.

\subsection{The Decoupled Continuous Stokes–Biot System}

{\sunny{We will now decouple the weak formulation \eqref{coupled} into a fluid subproblem and a Biot subproblem. To do this,  we will rewrite the integrals over $\Gamma$ in such a way that the interface conditions \eqref{interface1}-\eqref{interface4} can be expressed as Robin boundary conditions for the subproblems. More specifically, for the fluid subproblem we will utilize the following interface conditions, for each given  penalty parameter $L > 0$:
\begin{equation}
\begin{array}{ll}
\boldsymbol{n}_f \cdot\left(\boldsymbol{\sigma}_f \boldsymbol{n}_f\right)+L \boldsymbol{u} \cdot \boldsymbol{n}_f
= L \boldsymbol{u} \cdot \boldsymbol{n}_f-\phi
& \text{on } \Gamma, \\[2mm]
\boldsymbol{P}_f\!\left(\boldsymbol{\sigma}_f \boldsymbol{n}_f\right)
+\gamma \boldsymbol{P}_f\!\left(\boldsymbol{u}\right)
=\gamma \boldsymbol{P}_f\!\left(\boldsymbol{\xi}\right)
& \text{on } \Gamma, 
\end{array}
\end{equation}
and for the Biot subproblem we will utilize the following interface conditions:
\begin{equation}
\begin{array}{ll}
 {\boldsymbol{n}}_p \cdot \left( {\boldsymbol{\sigma}}_p{\boldsymbol{n}}_p
\right)+ {\phi}
+{\boldsymbol{\xi}}\cdot {\boldsymbol{n}}_p
={\boldsymbol{\xi}}\cdot {\boldsymbol{n}}_p
& \text{on } {\Gamma}, \\[2mm]
{K}{\nabla}{\phi}\cdot {\boldsymbol{n}}_p
+ {\phi}/L
- {\boldsymbol{\xi}}\cdot  {\boldsymbol{n}}_p
= {\phi}/L- {\boldsymbol{u}}\cdot  {\boldsymbol{n}}_p
& \text{on } {\Gamma}, \\[2mm]
  \boldsymbol{P}_p\!\left( {\boldsymbol{\sigma}}_p {\boldsymbol{n}}_p\right)
+\gamma   \boldsymbol{P}_p\!\left( {\boldsymbol{\xi}}\right)
=\gamma   \boldsymbol{P}_p\!\left( {\boldsymbol{u}}\right)
& \text{on } {\Gamma}.
\end{array}
\end{equation}
In the fully discrete scheme, presented below in equations \eqref{eq:fluid-discrete} and \eqref{eq:poro-discrete}, the left hand-sides will be considered at time $t^{n+1}$, while the right hand-sides will be considered at time $t^n$.
}}

%

{\sunny{
The decoupled continuous problem can now be written as follows:
}}
\begin{align}
	\label{eq:continuous-Stokes-eq}
	&\rho_f \left( \partial_t \boldsymbol{u} , \boldsymbol{v} \right)_{f}
	+ 2\mu_f \left( \boldsymbol{D}(\boldsymbol{u}), \boldsymbol{D}(\boldsymbol{v}) \right)_{f}
	- (p, \nabla \cdot \boldsymbol{v} )_{f} 
	\\
	\notag 
	&+ (\nabla \cdot \boldsymbol{u}, q )_{f} + \gamma \langle \boldsymbol{P}_f(\boldsymbol{u}), \boldsymbol{P}_f(\boldsymbol{v}) \rangle_\Gamma
	+ L \langle \boldsymbol{u} \cdot \boldsymbol{n}_f,\; \boldsymbol{v} \cdot \boldsymbol{n}_f \rangle_\Gamma 
	\\
	\notag 
	&= \langle \gamma \boldsymbol{P}_f(\boldsymbol{\xi}), \boldsymbol{P}_f(\boldsymbol{v}) \rangle_{\Gamma} 
    + \langle L \boldsymbol{u} \cdot \boldsymbol{n}_f - \phi, \boldsymbol{v} \cdot \boldsymbol{n}_f \rangle_{\Gamma}, 
\end{align}
and 
\begin{align}
	\label{eq:continuous-poroelasticity-eq}
	&\rho_p \left( \partial_t \boldsymbol{\xi}, \boldsymbol{\zeta} \right)_{p} + 2\mu_p \left( \boldsymbol{D}(\boldsymbol{\eta}), \boldsymbol{D}(\boldsymbol{\zeta}) \right)_{p} + \lambda_p ( \nabla \cdot \boldsymbol{\eta}, \nabla \cdot \boldsymbol{\zeta} )_{p} 
	\\
	\notag 
	&- \alpha ( \phi, \nabla \cdot \boldsymbol{\zeta} )_{p} + C_0 \left( \partial_t \phi, \psi \right)_{p}
+ \alpha ( \nabla \cdot \boldsymbol{\xi}, \psi )_{p} + ( K \nabla \phi, \nabla \psi )_{p} 
	\\
	\notag 
	&+ \gamma \langle \boldsymbol{P}_p(\boldsymbol{\xi}), \boldsymbol{P}_p(\boldsymbol{\zeta}) \rangle_\Gamma + \langle \boldsymbol{\xi} \cdot \boldsymbol{n}_p,\; \boldsymbol{\zeta} \cdot \boldsymbol{n}_p \rangle_\Gamma + \langle \phi, \boldsymbol{\zeta} \cdot \boldsymbol{n}_p \rangle_\Gamma 
	+ \frac{1}{L} \langle \phi, \psi \rangle_\Gamma - \langle \boldsymbol{\xi} \cdot \boldsymbol{n}_p, \psi \rangle_\Gamma 
	\\
	\notag 
	&= \langle \gamma \boldsymbol{P}_p (\boldsymbol{u}), \boldsymbol{P}_p(\boldsymbol{\zeta}) \rangle_{\Gamma}
    + \langle \boldsymbol{\xi} \cdot \boldsymbol{n}_p, \boldsymbol{\zeta} \cdot \boldsymbol{n}_p \rangle_{\Gamma} + \langle -\boldsymbol{u} \cdot \boldsymbol{n}_p + \phi/L, \psi \rangle_{\Gamma}. 
\end{align}

\section{Discretized problem: A parallelizable explicit splitting scheme}
\label{sec:discrete}

Let $t_n = n\Delta t$ for $n=0,1,\ldots,N$, with final time $T=N\Delta t$. For spatial discretization we consider conforming finite element subspaces:
\[
\boldsymbol{V}_{f,h}\subset\boldsymbol{V}_f,\qquad
Q_{f,h}\subset Q_f,\qquad
\boldsymbol{V}_{p,h}\subset\boldsymbol{V}_p,\qquad
Q_{p,h}\subset Q_p,
\]
where $(\boldsymbol{V}_{f,h},Q_{f,h})$ is a Stokes-stable pair.
All discrete spaces consist of piecewise polynomials of degree~$k$
and satisfy the usual approximation properties:
\[
\inf_{\boldsymbol{v}_h\in\boldsymbol{V}_{f,h}}
  \|\boldsymbol{v}-\boldsymbol{v}_h\|_{H^1(\Omega_f)}
  \le C h^k \|\boldsymbol{v}\|_{H^{k+1}(\Omega_f)}, \]
\[
\inf_{q_h\in Q_{f,h}}
  \|q-q_h\|_{L^2(\Omega_f)}
  \le C h^{k+1} \|q\|_{H^{k+1}(\Omega_f)},
\]
and analogously on~$\Omega_p$.

{\sunny{Our {\bf{fully discrete, parallelizable explicit splitting algorithm}} for solving the coupled Stokes-Biot problem \eqref{coupled} is obtained by solving the
fluid and poroelastic subproblems \emph{independently and in parallel},
where the coupling is enforced weakly via Nitsche-type interface terms
with penalty parameters $\gamma>0$ and $L>0$. More specifically, 
given $(\boldsymbol{u}_h^n$, $p_h^n$,
$\boldsymbol{\eta}_h^n$, $\boldsymbol{\xi}_h^n$, $\phi_h^n)$,
find 
$(\boldsymbol{u}_h^{n+1}$, $p_h^{n+1}$,
$\boldsymbol{\eta}_h^{n+1}$, $\boldsymbol{\xi}_h^{n+1}$, $\phi_h^{n+1})$
such that the following two {\emph{independent}} subproblems are solved:
}}
\vskip 0.1in
\begin{enumerate}[leftmargin=12pt,labelsep=0.5em]
\item{\emph{Discrete fluid subproblem}}

Find $(\boldsymbol{u}_h^{n+1},p_h^{n+1})
\in\boldsymbol{V}_{f,h}\times Q_{f,h}$ such that:
\begin{equation}\label{eq:fluid-discrete}
\begin{aligned}
&\rho_f\!\left(
  \frac{\boldsymbol{u}_h^{n+1}-\boldsymbol{u}_h^n}{\Delta t},
  \boldsymbol{v}\right)_f
+ 2\mu_f \left(\boldsymbol{D}(\boldsymbol{u}_h^{n+1}),
\boldsymbol{D}(\boldsymbol{v})\right)_f
- ( p_h^{n+1},
     \nabla\boldsymbol{v})_f
\\
&\quad
+ (\nabla\!\cdot\!\boldsymbol{u}_h^{n+1},q)_f
+ \gamma\langle
   \boldsymbol{P}_f\boldsymbol{u}_h^{n+1},
\boldsymbol{P}_f\boldsymbol{v}\rangle_{\Gamma}
+ L\langle
\boldsymbol{u}_h^{n+1}\!\cdot\!\boldsymbol{n}_f,
\boldsymbol{v}\!\cdot\!\boldsymbol{n}_f\rangle_{\Gamma}
\\
&= (\boldsymbol{F}_f^{n+1},\boldsymbol{v})_f
+ \langle
\gamma\,\boldsymbol{P}_f\boldsymbol{\xi}_h^n,
\boldsymbol{P}_f\boldsymbol{v}\rangle_{\Gamma}
+ \langle
L\,\boldsymbol{u}_h^n\!\cdot\!\boldsymbol{n}_f
   - \phi_h^n,
\boldsymbol{v}\!\cdot\!\boldsymbol{n}_f\rangle_{\Gamma}
+ \langle
\boldsymbol{f}_f^{n+1},\boldsymbol{v}\rangle_{f},
\end{aligned}
\end{equation}
for all $(\boldsymbol{v}_h,q_h)\in\boldsymbol{V}_{f,h}\times Q_{f,h}$.
\vskip  0.1in
\item{\emph{Discrete poroelastic structure subproblem}}
Find $(\boldsymbol{\eta}_h^{n+1},
        \boldsymbol{\xi}_h^{n+1},
        \phi_h^{n+1})
\in\boldsymbol{V}_{p,h}\times\boldsymbol{V}_{p,h}$ $\times Q_{p,h}$
such that:
\[
\boldsymbol{\xi}_h^{n+1}
  = \frac{\boldsymbol{\eta}_h^{n+1}-\boldsymbol{\eta}_h^n}{\Delta t},
\]
and
\begin{equation}\label{eq:poro-discrete}
\begin{aligned}
&{\small{\rho_p\!\left(
  \frac{\boldsymbol{\xi}_h^{n+1}-\boldsymbol{\xi}_h^n}{\Delta t},
  \boldsymbol{\zeta}\right)_p
+ 2\mu_p\left( \boldsymbol{D}(\boldsymbol{\eta}_h^{n+1}),
\boldsymbol{D}(\boldsymbol{\zeta})\right)_p
 + \lambda_p\left( \nabla\!\cdot\!\boldsymbol{\eta}_h^{n+1},
     \nabla\boldsymbol{\zeta}\right)_p
- \alpha(\phi_h^{n+1},\nabla\!\cdot\!\boldsymbol{\zeta})_p
}}
\\
& \quad
{\small{+ C_0\!\left(
  \frac{\phi_h^{n+1}-\phi_h^n}{\Delta t},
  \psi\right)_p
+ \alpha(\nabla\!\cdot\!\boldsymbol{\xi}_h^{n+1},\psi)_p
+ (K\nabla\phi_h^{n+1},\nabla\psi)_p
+ \gamma\langle
   \boldsymbol{P}_p\boldsymbol{\xi}_h^{n+1},
   \boldsymbol{P}_p\boldsymbol{\zeta}\rangle_{\Gamma}
   }}
 \\
& \quad
+\langle\boldsymbol{\xi}_h^{n+1}\!\cdot\!\boldsymbol{n}_p,
\boldsymbol{\zeta}\!\cdot\!\boldsymbol{n}_p\rangle_{\Gamma}
+ \langle
   \phi_h^{n+1},
   \boldsymbol{\zeta}\cdot\boldsymbol{n}_p\rangle_{\Gamma}
+ \frac{1}{L}\langle\phi_h^{n+1},\psi\rangle_{\Gamma}
- \langle
\boldsymbol{\xi}_h^{n+1}\!\cdot\!\boldsymbol{n}_p,
   \psi\rangle_{\Gamma}
\\
&= (\boldsymbol{f}_p^{n+1},\boldsymbol{\zeta})_p
+ (g_p^{n+1},\psi)_p
+ \langle
   \gamma\,\boldsymbol{P}_p\boldsymbol{u}_h^n,
   \boldsymbol{P}_p\boldsymbol{\zeta}\rangle_{\Gamma}
+ \langle
   \boldsymbol{\xi}_h^n\!\cdot\!\boldsymbol{n}_p,
\boldsymbol{\zeta}\!\cdot\!\boldsymbol{n}_p\rangle_{\Gamma}
\\
& \quad + \left\langle
   -\boldsymbol{u}_h^n\!\cdot\!\boldsymbol{n}_p
   + \tfrac{1}{L}\phi_h^n,
   \psi\right\rangle_{\Gamma}
\end{aligned}
\end{equation}
for all $(\boldsymbol{\zeta}_h,\psi_h)
 \in\boldsymbol{V}_{p,h}\times Q_{p,h}$.
\end{enumerate}
\if 1 = 0
\smallskip
\textcolor{black}{Specifically, We consider the following Robin interface boundary conditions:
\begin{equation}
\begin{array}{ll}
\boldsymbol{n}_f^n \cdot\left(\boldsymbol{\sigma}_f \boldsymbol{n}_f\right)^{n+1}+L \boldsymbol{u}^{n+1} \cdot \boldsymbol{n}_f^n
= L \boldsymbol{u}^n \cdot \boldsymbol{n}_f^n-\phi^n
& \text{on } \Gamma, \\[2mm]
\boldsymbol{P}_f\!\left(\boldsymbol{\sigma}_f \boldsymbol{n}_f\right)^{n+1}
+\gamma \boldsymbol{P}_f\!\left(\boldsymbol{u}^{n+1}\right)
=\gamma \boldsymbol{P}_f\!\left(\boldsymbol{\xi}^n\right)
& \text{on } \Gamma, \\[2mm]
 {\boldsymbol{n}}_p \cdot \left( {\boldsymbol{\sigma}}_p{\boldsymbol{n}}_p
\right)^{n+1}+ {\phi}^{n+1}
+{\boldsymbol{\xi}}^{n+1}\cdot {\boldsymbol{n}}_p
={\boldsymbol{\xi}}^n\cdot {\boldsymbol{n}}_p
& \text{on } {\Gamma}, \\[2mm]
\mathbb{K}{\nabla}{\phi}^{n+1}\cdot {\boldsymbol{n}}_p
+ {\phi}^{n+1}/L
- {\boldsymbol{\xi}}^{n+1}\cdot  {\boldsymbol{n}}_p
= {\phi}^n/L- {\boldsymbol{u}}^n\cdot  {\boldsymbol{n}}_p
& \text{on } {\Gamma}, \\[2mm]
  \boldsymbol{P}_p\!\left( {\boldsymbol{\sigma}}_p {\boldsymbol{n}}_p\right)^{n+1}
+\gamma   \boldsymbol{P}_p\!\left( {\boldsymbol{\xi}}^{n+1}\right)
=\gamma   \boldsymbol{P}_p\!\left( {\boldsymbol{u}}^n\right)
& \text{on } {\Gamma}.
\end{array}
\end{equation}
}
\fi

Equations \eqref{eq:fluid-discrete}–\eqref{eq:poro-discrete} define a fully parallelizable splitting scheme, in which the fluid and poroelastic subproblems are decoupled and can be solved simultaneously at each time step \cite{24M1695713}. The Nitsche-type terms on $\Gamma$ ensure the consistency and stability of the interface coupling while maintaining parallel efficiency \cite{BADIA20097986,BUKAC2015138,MartinaOyekole}. Furthermore, the proposed scheme attains additional stabilization by replacing the stress terms with the pore pressure through the coupling condition.

\section{Ritz Projections and General Approximation Properties}
\label{sec:projections}

To relate the continuous problem~\eqref{eq:continuous-Stokes-eq}--\eqref{eq:continuous-poroelasticity-eq}
and the discrete scheme
\eqref{eq:fluid-discrete}--\eqref{eq:poro-discrete},
{\sunny{we introduce projection operators $\boldsymbol{\Pi}_u$ and ${\Pi}_p$, from continuous space $V_f \times Q_f$ to discrete space $V_{f,h} \times Q_{f,h}$, which are defined
in such a way that they preserve the
bilinear forms of each subproblem.}}
These projections play a key role in the error analysis,
ensuring that interpolation errors disappear from the
variational equations when subtracting continuous and discrete formulations.

\paragraph{Ritz projection for Stokes velocity and pressure:}
Consider the Ritz projections
$\left(\boldsymbol{\Pi}_u \boldsymbol{u}^n, {\Pi}_p p^n\right) : \boldsymbol{V}_f \times Q_p \to  \boldsymbol{V}_{f, h} \times Q_{f, h}$ defined by:
\begin{equation}\label{eq:proj-fluid}
\begin{aligned}
2 & \mu_f\left(\boldsymbol{D}\left(\boldsymbol{\Pi}_u \boldsymbol{u}^n\right), \boldsymbol{D}(\boldsymbol{v})\right)_f
-\left({\Pi}_p p^n, \nabla \cdot \boldsymbol{v}\right)_f+\left(\nabla \cdot \boldsymbol{\Pi}_u \boldsymbol{u}^n, q\right)_f \\
& +\gamma\left\langle \boldsymbol{P}_f\left(\boldsymbol{\Pi}_u \boldsymbol{u}^n\right), \boldsymbol{P}_f(\boldsymbol{v})\right\rangle_{\Gamma}+L\left\langle\boldsymbol{\Pi}_u \boldsymbol{u}^n \cdot \boldsymbol{n}_f, \boldsymbol{v} \cdot \boldsymbol{n}_f\right\rangle_{\Gamma} \\
= & 2 \mu_f\left(\boldsymbol{D}\left(\boldsymbol{u}^n\right), \boldsymbol{D}(\boldsymbol{v})\right)_f-\left(p^n, \nabla \cdot \boldsymbol{v}\right)_f+\left(\nabla \cdot \boldsymbol{u}^n, q\right)_f \\
& +\gamma\left\langle \boldsymbol{P}_f\left(\boldsymbol{u}^n\right), \boldsymbol{P}_f(\boldsymbol{v})\right\rangle_{\Gamma}+L\left\langle\boldsymbol{u}^n \cdot \boldsymbol{n}_f, \boldsymbol{v} \cdot \boldsymbol{n}_f\right\rangle_{\Gamma}, \quad \forall(\boldsymbol{v}, q) \in \boldsymbol{V}_{f, h} \times Q_{f, h} .
\end{aligned}
\end{equation}
{\sunny{We now show the boundedness and continuity of the interface/boundary terms 
$\gamma\left\langle \boldsymbol{P}_f\left(\boldsymbol{u}^n\right), \boldsymbol{P}_f(\boldsymbol{v})\right\rangle_{\Gamma}$ and $L\left\langle\boldsymbol{u}^n \cdot \boldsymbol{n}_f, \boldsymbol{v} \cdot \boldsymbol{n}_f\right\rangle_{\Gamma}$, which can be considered parts of the penalty terms enforcing the interface conditions in the continuous weak formulation. We will use this result to show the Ritz bilinear form \eqref{eq:proj-fluid} associated with the fluid subproblem is well-defined.
}}
\begin{lemma}[Trace boundedness and continuity of the interface penalty terms]\label{BoundaryTerms}
Let $\Omega_f \subset \mathbb{R}^d$ be a Lipschitz domain and let $\Gamma \subset \partial \Omega_f$ denote the fluid structure interface with unit outward normal $\boldsymbol{n}_f$.
Then, for all $\boldsymbol{v} \in [H^1(\Omega_f)]^d$, the tangential projector $\boldsymbol{P}_f$ {\sunny{satisfies}}
\[
\|\boldsymbol{P}_f \boldsymbol{v}\|_{L^2(\Gamma)}
\le
\|\boldsymbol{v}\|_{L^2(\Gamma)}
\le
C_{\mathrm{tr}}\|\boldsymbol{v}\|_{H^1(\Omega_f)} .
\]
Consequently, for all $\boldsymbol{w},\boldsymbol{v} \in [H^1(\Omega_f)]^d$, there exists a constant $C>0$ such that
\[
\left|\left\langle \boldsymbol{P}_f \boldsymbol{w}, \boldsymbol{P}_f \boldsymbol{v}\right\rangle_{\Gamma}\right|
\le
C\|\boldsymbol{w}\|_{H^1(\Omega_f)}\|\boldsymbol{v}\|_{H^1(\Omega_f)},
\]
and
\[
\left|\left\langle \boldsymbol{w}\cdot \boldsymbol{n}_f, \boldsymbol{v}\cdot \boldsymbol{n}_f\right\rangle_{\Gamma}\right|
\le
C\|\boldsymbol{w}\|_{H^1(\Omega_f)}\|\boldsymbol{v}\|_{H^1(\Omega_f)} .
\]
\end{lemma}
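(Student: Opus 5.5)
The plan is to work pointwise on $\Gamma$ first and then invoke the standard trace theorem. Since $\Omega_f$ is Lipschitz, the outward unit normal $\boldsymbol{n}_f$ exists for almost every point of $\partial\Omega_f$ and is measurable and bounded, with $|\boldsymbol{n}_f|=1$. Hence for a.e. $x\in\Gamma$ the map $\boldsymbol{P}_f(x)=\boldsymbol{I}-\boldsymbol{n}_f\otimes\boldsymbol{n}_f$ is an orthogonal projection. This gives the pointwise identity
\[
|\boldsymbol{v}|^2=|\boldsymbol{P}_f\boldsymbol{v}|^2+|\boldsymbol{v}\cdot\boldsymbol{n}_f|^2 .
\]
From it we read off both $|\boldsymbol{P}_f\boldsymbol{v}|\le|\boldsymbol{v}|$ and $|\boldsymbol{v}\cdot\boldsymbol{n}_f|\le|\boldsymbol{v}|$ a.e. on $\Gamma$. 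Integrating over $\Gamma$ yields
\[
\|\boldsymbol{P}_f\boldsymbol{v}\|_{L^2(\Gamma)}\le\|\boldsymbol{v}\|_{L^2(\Gamma)},
\qquad
\|\boldsymbol{v}\cdot\boldsymbol{n}_f\|_{L^2(\Gamma)}\le\|\boldsymbol{v}\|_{L^2(\Gamma)} .
\]
Here $\boldsymbol{v}$ on $\Gamma$ is understood as the trace $\gamma_0\boldsymbol{v}\in L^2(\Gamma)^d$, and the products with the $L^\infty$ normal are measurable, so all quantities are well defined.

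The second inequality of the first claim is the trace theorem for Lipschitz domains: $\gamma_0:H^1(\Omega_f)\to L^2(\partial\Omega_f)$ is bounded. I will apply it componentwise and restrict to $\Gamma\subset\partial\Omega_f$, which gives
\[
\|\boldsymbol{v}\|_{L^2(\Gamma)}\le\|\boldsymbol{v}\|_{L^2(\partial\Omega_f)}\le C_{\mathrm{tr}}\|\boldsymbol{v}\|_{H^1(\Omega_f)} .
\]
The constant $C_{\mathrm{tr}}$ depends only on $\Omega_f$ and $d$. If a sharper form is wanted, the multiplicative trace inequality $\|v\|_{L^2(\partial\Omega_f)}^2\le C\|v\|_{L^2}\|v\|_{H^1}$ could be used, but it is not needed here.

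For the bilinear bounds, I will apply the Cauchy--Schwarz inequality in $L^2(\Gamma)$ to each pairing:
\[
|\langle\boldsymbol{P}_f\boldsymbol{w},\boldsymbol{P}_f\boldsymbol{v}\rangle_\Gamma|\le\|\boldsymbol{P}_f\boldsymbol{w}\|_{L^2(\Gamma)}\|\boldsymbol{P}_f\boldsymbol{v}\|_{L^2(\Gamma)}\le C_{\mathrm{tr}}^2\|\boldsymbol{w}\|_{H^1}\|\boldsymbol{v}\|_{H^1},
\]
and in the same way
\[
|\langle\boldsymbol{w}\cdot\boldsymbol{n}_f,\boldsymbol{v}\cdot\boldsymbol{n}_f\rangle_\Gamma|\le C_{\mathrm{tr}}^2\|\boldsymbol{w}\|_{H^1}\|\boldsymbol{v}\|_{H^1}.
\]
So the constant in both bilinear bounds is $C=C_{\mathrm{tr}}^2$. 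Multiplying by $\gamma$ or $L$ gives the continuity of the penalty terms in \eqref{eq:proj-fluid} with constants $\gamma C_{\mathrm{tr}}^2$ and $L C_{\mathrm{tr}}^2$.

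There is no real obstacle in this argument. The only point needing care is that on a merely Lipschitz interface the normal is defined only almost everywhere and is discontinuous at corners. The pointwise projection argument needs only $|\boldsymbol{n}_f|=1$ a.e. and measurability, which Lipschitz regularity guarantees. I will state this explicitly and interpret $\langle\cdot,\cdot\rangle_\Gamma$ as the $L^2(\Gamma)$ inner product, which is legitimate since all the traces involved lie in $L^2(\Gamma)$.
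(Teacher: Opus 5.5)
Your proposal is correct and follows the standard argument the paper takes for granted; the paper states this lemma without a written proof. It relies on exactly the steps you give: $\boldsymbol{P}_f$ is a pointwise orthogonal projection (so $|\boldsymbol{P}_f\boldsymbol{v}|\le|\boldsymbol{v}|$ and $|\boldsymbol{v}\cdot\boldsymbol{n}_f|\le|\boldsymbol{v}|$ a.e. on $\Gamma$), then the trace inequality \eqref{eq:trace}, then Cauchy--Schwarz, which yields $C=C_{\mathrm{tr}}^2$.
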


{\sunny{We now show the well-posedness of \eqref{eq:proj-fluid}, namely, we show that the Ritz bilinear form is well-defined. For this purpose we introduce the bilinear form describing the left hand-side of \eqref{eq:proj-fluid}:
\begin{equation}\label{RitzForm}
\begin{aligned}
\mathcal{A}_f\big((\boldsymbol{w}, r),(\boldsymbol{v}, q)\big)
:=\;&
2 \mu_f(\boldsymbol{D}(\boldsymbol{w}), \boldsymbol{D}(\boldsymbol{v}))_f
-(r, \nabla \cdot \boldsymbol{v})_f
+(\nabla \cdot \boldsymbol{w}, q)_f \\
&\quad
+\gamma\left\langle \boldsymbol{P}_f(\boldsymbol{w}), \boldsymbol{P}_f(\boldsymbol{v})\right\rangle_{\Gamma}
+L\left\langle \boldsymbol{w} \cdot \boldsymbol{n}_f, \boldsymbol{v} \cdot \boldsymbol{n}_f\right\rangle_{\Gamma},
\end{aligned}
\end{equation}
where 
$(\boldsymbol{w}, r),(\boldsymbol{v}, q) \in \boldsymbol{V}_f \times Q_f$, with $(\boldsymbol{V}_f \times Q_f)$ satisfying the continuous inf-sup condition.
Because of Lemma~\ref{BoundaryTerms} we see that the boundary terms are both well-defined since $\boldsymbol{w}, \boldsymbol{v} \in \boldsymbol{V}_f \subset [H^1(\Omega_f)]^d$. 

Furtheremore, we have the following properties of the Ritz bilinear form $\mathcal{A}_f$ associated with the fluid subproblem:
}}
\begin{lemma}[Well-posedness and boundedness of the fluid Ritz bilinear form]\label{lemma4p2}
{\sunny{Let $\mathcal{A}_f$ be the Ritz bilinear form defined in \eqref{RitzForm}.}} Then there exists a constant $C>0$ (depending on $\mu_f, \gamma, L$, and the trace constant) such that
\[
\left|\mathcal{A}_f\big((\boldsymbol{w}, r),(\boldsymbol{v}, q)\big)\right|
\le
C\left(\|\boldsymbol{w}\|_{H^1(\Omega_f)}+\|r\|_{L^2(\Omega_f)}\right)
\left(\|\boldsymbol{v}\|_{H^1(\Omega_f)}+\|q\|_{L^2(\Omega_f)}\right).
\]
Moreover, for $\gamma \ge 0$ and $L \ge 0$,
\[
\mathcal{A}_f\big((\boldsymbol{v}, q),(\boldsymbol{v},q)\big)
=
2 \mu_f\|\boldsymbol{D}(\boldsymbol{v})\|_{L^2(\Omega_f)}^2
+\gamma\|\boldsymbol{P}_f(\boldsymbol{v})\|_{L^2(\Gamma)}^2
+L\|\boldsymbol{v} \cdot \boldsymbol{n}_f\|_{L^2(\Gamma)}^2 .
\]
\end{lemma}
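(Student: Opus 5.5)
The plan is to bound each of the five terms in $\mathcal{A}_f$ separately and then read off the coercivity identity by setting the two arguments equal.

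\emph{Boundedness.} For the viscous term, Cauchy--Schwarz together with $\|\boldsymbol{D}(\boldsymbol{w})\|_{L^2(\Omega_f)}\le \|\nabla\boldsymbol{w}\|_{L^2(\Omega_f)}\le\|\boldsymbol{w}\|_{H^1(\Omega_f)}$ gives
\[
2\mu_f|(\boldsymbol{D}(\boldsymbol{w}),\boldsymbol{D}(\boldsymbol{v}))_f|\le 2\mu_f\|\boldsymbol{w}\|_{H^1(\Omega_f)}\|\boldsymbol{v}\|_{H^1(\Omega_f)}.
\]
For the two pressure--divergence couplings, I will use $\|\nabla\cdot\boldsymbol{v}\|_{L^2(\Omega_f)}\le\sqrt{d}\,\|\nabla\boldsymbol{v}\|_{L^2(\Omega_f)}$. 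This yields
\[
|(r,\nabla\cdot\boldsymbol{v})_f|\le\sqrt d\,\|r\|_{L^2}\|\boldsymbol{v}\|_{H^1},\qquad |(\nabla\cdot\boldsymbol{w},q)_f|\le\sqrt d\,\|\boldsymbol{w}\|_{H^1}\|q\|_{L^2}.
\]
For the two interface terms, I will invoke Lemma~\ref{BoundaryTerms} directly, which bounds them by $\gamma C_{\mathrm{tr}}^2$ and $L C_{\mathrm{tr}}^2$ times $\|\boldsymbol{w}\|_{H^1}\|\boldsymbol{v}\|_{H^1}$. The normal-component bound there follows from $|\boldsymbol{v}\cdot\boldsymbol{n}_f|\le|\boldsymbol{v}|$ pointwise, since $\boldsymbol{n}_f$ is a unit vector a.e.\ on a Lipschitz boundary. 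Each of the five bounds is dominated by a product of the form $(\|\boldsymbol{w}\|_{H^1}+\|r\|_{L^2})(\|\boldsymbol{v}\|_{H^1}+\|q\|_{L^2})$. Summing them therefore gives the claimed estimate with $C=2\mu_f+2\sqrt d+(\gamma+L)C_{\mathrm{tr}}^2$.

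\emph{Energy identity.} Set $(\boldsymbol{w},r)=(\boldsymbol{v},q)$. The two pressure terms then read $-(q,\nabla\cdot\boldsymbol{v})_f+(\nabla\cdot\boldsymbol{v},q)_f$, which cancel exactly because of the skew-symmetric sign convention in \eqref{RitzForm}. The remaining three terms are already squares, namely
\[
2\mu_f\|\boldsymbol{D}(\boldsymbol{v})\|^2_{L^2(\Omega_f)},\qquad \gamma\|\boldsymbol{P}_f\boldsymbol{v}\|^2_{L^2(\Gamma)},\qquad L\|\boldsymbol{v}\cdot\boldsymbol{n}_f\|^2_{L^2(\Gamma)}.
\]
Each is nonnegative when $\gamma,L\ge0$, which gives the stated identity.

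There is no genuine obstacle here. The only points needing care are the constant in the divergence bound and the pointwise inequality $|\boldsymbol{P}_f\boldsymbol{v}|\le|\boldsymbol{v}|$. The latter holds because $\boldsymbol{P}_f=\boldsymbol{I}-\boldsymbol{n}_f\otimes\boldsymbol{n}_f$ is an orthogonal projection, and it is already contained in Lemma~\ref{BoundaryTerms}. Note that the lemma claims only boundedness and the identity; the identity gives nonnegativity but not full coercivity in $\|\cdot\|_{H^1}\times\|\cdot\|_{L^2}$. Solvability of \eqref{eq:proj-fluid} would additionally require Korn's inequality on $\boldsymbol{V}_f$ and the discrete inf-sup condition of the Stokes pair, and neither is needed for the statement as written.
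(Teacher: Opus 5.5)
Your proposal is correct and follows the paper's route. The paper only says the lemma is a consequence of Lemma~\ref{BoundaryTerms}: term-by-term Cauchy--Schwarz with the trace bound for the interface terms, and cancellation of the skew-symmetric pressure--divergence pair for the identity. You spell out exactly this, with explicit constants, and your remark that actual solvability of \eqref{eq:proj-fluid} additionally needs Korn's inequality and the discrete inf-sup condition is apt, since the lemma's title claims more than its statement proves.
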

{\sunny{The proof is a consequence of Lemma~\ref{BoundaryTerms}.}}

{\sunny{Next, we state the following two approximation estimates for the two projection operators $\boldsymbol{\Pi}_u$ and $\Pi_p$, introduced above, which will be used in Section~\ref{sec:residual_estimates} to derive the estimates of the residual terms needed for the final error estimate. }}

\vskip 0.1in
\begin{itemize}[leftmargin=12pt,labelsep=0.5em]
\item{\emph{Approximation estimate 1:}}
For sufficiently smooth $\boldsymbol{u}$ and $p$, the following approximation estimates hold \cite{Brezzi1991,Girault1986}:
\begin{align}
    \label{eq:proj-error-fluid}
    \textcolor{black}{\|\boldsymbol{u}-\boldsymbol{\Pi}_u\boldsymbol{u}\|_{H^1(\Omega_f)} + \|p-\Pi_p p\|_{L^2(\Omega_f)}  \le C h^k (\|\boldsymbol{u}\|_{H^{k+1}(\Omega_f)} + \|p\|_{H^k(\Omega_f)}) . }
\end{align}
\textcolor{black}{We emphasize that the operators $\boldsymbol{\Pi}_u$ and $\Pi_p$ appearing in the above inequality are not constructed independently; instead, they depend jointly on the velocity-pressure pair $(\boldsymbol{u}, p)$.}
\vskip 0.1in
\item {\emph{Approximation estimate 2:}} 
We will use the following generalization of the $L^2$-estimate for the projection $\boldsymbol{\Pi}_u \boldsymbol{u}$ (see \cite{Grisvard:85,Dauge:88,Mazya-Rossmann:10}), adapted to the setting in which the trace of the fluid velocity is not required to vanish on the entire boundary $\partial\Omega_f$. This formulation is particularly suited for the FSI problem considered in this manuscript.
\begin{lemma}\label{LemmaReg2}
Let $\boldsymbol{V}_{f,0}$ be the subspace of $\boldsymbol{V}_f$ such that the trace on $\partial \Omega_f \setminus \Gamma$ vanishes, and let $Q_{f,0}$ be \textcolor{black}{the  mean-value zero functions on $\Omega_f$ if $\Gamma = \emptyset$ and $Q_{f,0} = Q_f$ otherwise}. Assume the following dual regularity property: for any
$\boldsymbol g\in \boldsymbol L^2(\Omega_f)$, the solution
$(\boldsymbol\Psi_u,\Psi_p)\in \boldsymbol V_{f,0}\times Q_{f,0}$ of the dual Stokes problem
\begin{align}\label{eq:dual-stokes-revised}
& 2 \mu_f\bigl(\boldsymbol D(\boldsymbol v), \boldsymbol D(\boldsymbol \Psi_u)\bigr)_f
-\bigl(\Psi_p, \nabla \cdot \boldsymbol v\bigr)_f
+\bigl(\nabla \cdot \boldsymbol \Psi_u, q\bigr)_f \notag\\
& 
+\gamma\bigl\langle \boldsymbol P_f(\boldsymbol v), \boldsymbol P_f(\boldsymbol \Psi_u)\bigr\rangle_{\Gamma}
+L\bigl\langle \boldsymbol v\cdot \boldsymbol n_f, \boldsymbol \Psi_u\cdot \boldsymbol n_f\bigr\rangle_{\Gamma}
=
(\boldsymbol g,\boldsymbol v)_f, \quad \forall (\boldsymbol v,q)\in \boldsymbol V_{f,0}\times Q_{f,0}
\end{align}
satisfies
$
\|\boldsymbol\Psi_u\|_{H^{1+r}(\Omega_f)}
+\|\Psi_p\|_{H^r(\Omega_f)}
\le C\|\boldsymbol g\|_{L^2(\Omega_f)},
~ 0<r\le 1$ 
for $C>0$ depending on $\Omega_f$. 
Then the Stokes projection $(\boldsymbol\Pi_u\boldsymbol u,\Pi_p p)$ satisfies
\begin{equation}\label{eq:proj-L2-error-fluid}
\|\boldsymbol u-\boldsymbol\Pi_u\boldsymbol u\|_{L^2(\Omega_f)}
\le
C h^{k+r}
\left(
\|\boldsymbol u\|_{H^{k+1}(\Omega_f)}
+
\|p\|_{H^k(\Omega_f)}
\right).
\end{equation}
\end{lemma}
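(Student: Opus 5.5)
This is an Aubin--Nitsche duality argument. Set $\boldsymbol e_u := \boldsymbol u-\boldsymbol\Pi_u\boldsymbol u$ and $e_p := p-\Pi_p p$. By the definition \eqref{eq:proj-fluid} of the Ritz projection, the pair is Galerkin orthogonal with respect to the form \eqref{RitzForm}:
\[
\mathcal A_f\big((\boldsymbol e_u,e_p),(\boldsymbol v_h,q_h)\big)=0
\qquad \forall(\boldsymbol v_h,q_h)\in \boldsymbol V_{f,h}\times Q_{f,h}.
\]

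\emph{Step 1 (dual problem).} Take $\boldsymbol g=\boldsymbol e_u$ in the dual Stokes problem \eqref{eq:dual-stokes-revised} and let $(\boldsymbol\Psi_u,\Psi_p)$ be its solution. By assumption,
\[
\|\boldsymbol\Psi_u\|_{H^{1+r}}+\|\Psi_p\|_{H^r}\le C\|\boldsymbol e_u\|_{L^2}.
\]
Now test \eqref{eq:dual-stokes-revised} with $\boldsymbol v=\boldsymbol e_u$, which is admissible when $\boldsymbol e_u\in \boldsymbol V_{f,0}$; I assume the outer boundary condition is homogeneous Dirichlet on $\Sigma_{D,f}$, so that $\boldsymbol V_f$ and $\boldsymbol V_{f,0}$ coincide. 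Take $q=-e_p$, adjusting by a constant if needed for the $Q_{f,0}$ convention; this works because the divergence pairing sees only $\nabla\cdot\boldsymbol\Psi_u$, which has the right compatibility. Rearranging the pressure terms gives the adjoint identity
\[
\|\boldsymbol e_u\|_{L^2}^2=\mathcal A_f\big((\boldsymbol e_u,e_p),(\boldsymbol\Psi_u,-\Psi_p)\big).
\]
The sign bookkeeping needs care here: $\mathcal A_f$ is skew in the pressure coupling, so one checks that $-(e_p,\nabla\cdot\boldsymbol\Psi_u)+(\nabla\cdot\boldsymbol e_u,-\Psi_p)$ reproduces exactly the dual problem's terms $(\nabla\cdot\boldsymbol\Psi_u,q)-(\Psi_p,\nabla\cdot\boldsymbol v)$.

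\emph{Step 2 (Galerkin orthogonality and interpolation).} Let $(\boldsymbol\Psi_{u,h},\Psi_{p,h})\in\boldsymbol V_{f,h}\times Q_{f,h}$ be the best approximations, for example Scott--Zhang interpolants. Subtracting them using the orthogonality above and then applying the boundedness in Lemma~\ref{lemma4p2} gives
\[
\|\boldsymbol e_u\|_{L^2}^2\le C\big(\|\boldsymbol e_u\|_{H^1}+\|e_p\|_{L^2}\big)\big(\|\boldsymbol\Psi_u-\boldsymbol\Psi_{u,h}\|_{H^1}+\|\Psi_p-\Psi_{p,h}\|_{L^2}\big).
\]
The interface terms in this bound are controlled by Lemma~\ref{BoundaryTerms}. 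Interpolation in fractional Sobolev spaces gives
\[
\|\boldsymbol\Psi_u-\boldsymbol\Psi_{u,h}\|_{H^1}+\|\Psi_p-\Psi_{p,h}\|_{L^2}\le Ch^r\big(\|\boldsymbol\Psi_u\|_{H^{1+r}}+\|\Psi_p\|_{H^r}\big)\le Ch^r\|\boldsymbol e_u\|_{L^2}.
\]
Combining this with \eqref{eq:proj-error-fluid} and dividing by $\|\boldsymbol e_u\|_{L^2}$ yields \eqref{eq:proj-L2-error-fluid}.

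The main obstacle is the pressure-space and test-space compatibility in Step 1. The dual problem is posed on $\boldsymbol V_{f,0}\times Q_{f,0}$, while $e_p$ lives in $Q_f=L^2_0$, so one must verify that $\boldsymbol e_u$ and $e_p$ are legitimate test functions, or correct them by constants. In the case $\Gamma\neq\emptyset$, the dual pressure is not mean-zero, and this is harmless only because the $L\langle\cdot,\cdot\rangle_\Gamma$ term fixes the normal flux. If $\Sigma_{D,f}$ carries a nonhomogeneous or Neumann condition, I would instead restrict attention to the Dirichlet part and argue that the Ritz projection inherits the boundary trace.
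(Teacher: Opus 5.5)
Your proposal is correct and follows the same Aubin--Nitsche duality argument as the paper: the dual problem with $\boldsymbol g=\boldsymbol e_u$, Galerkin orthogonality of the Ritz projection, boundedness of $\mathcal A_f$ via Lemma~\ref{lemma4p2}, $O(h^r)$ approximation of $(\boldsymbol\Psi_u,\Psi_p)$ combined with the assumed regularity, and then \eqref{eq:proj-error-fluid}. Your test choice $q=-e_p$ is the cleaner one: it gives exactly $\|\boldsymbol e_u\|^2=\mathcal A_f\bigl((\boldsymbol e_u,e_p),(\boldsymbol\Psi_u,-\Psi_p)\bigr)$, so orthogonality applies verbatim, whereas the paper's $q=e_p$ leaves an extra term $2(e_p,\nabla\cdot\boldsymbol\Psi_{u,h})$ that is harmless only because $(\nabla\cdot\boldsymbol\Psi_u,e_p)=0$.
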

\begin{remark} It is known that $r = 1$ when $\Omega_f$ is a convex Lipschitz domain and $\Gamma = \emptyset$. For nonconvex Lipschitz domains, as well as nonconvex polygonal or polyhedral domains, the exponent $0 < r < 1$ depends on the geometric features of $\Omega_f$. We refer to \cite{Grisvard:85,Dauge:88,Mazya-Rossmann:10} for further details.
\end{remark}
\begin{proof}
\textcolor{black}{The proof is a consequence of  a variant of the standard duality argument for Stokes problems. Define the projection errors }
$$
\boldsymbol e_u:=\boldsymbol u-\boldsymbol\Pi_u\boldsymbol u,
\qquad
e_p:=p-\Pi_p p.
$$
\textcolor{black}{Let $(\boldsymbol\Psi_u,\Psi_p)\in \boldsymbol V_{f,0}\times Q_{f,0}$ be the solution of the dual problem \eqref{eq:dual-stokes-revised}
with the forcing term 
$\boldsymbol{g}:=\boldsymbol{e}_u$}. 
Taking $(\boldsymbol v,q)=(\boldsymbol e_u,e_p)$ in this dual problem,
we obtain
\begin{align}
\|\boldsymbol e_u\|_{L^2(\Omega_f)}^2
&=
2\mu_f\bigl(\boldsymbol D(\boldsymbol e_u),\boldsymbol D(\boldsymbol\Psi_u)\bigr)_f
-(\Psi_p,\nabla\cdot \boldsymbol e_u)_f
+(\nabla\cdot \boldsymbol\Psi_u,e_p)_f \notag\\
&\qquad
+\gamma\langle \boldsymbol P_f(\boldsymbol e_u),\boldsymbol P_f(\boldsymbol\Psi_u)\rangle_\Gamma
+L\langle \boldsymbol e_u\cdot \boldsymbol n_f,\boldsymbol\Psi_u\cdot \boldsymbol n_f\rangle_\Gamma.
\label{eq:eueu-dual}
\end{align}
Next, we let $(\boldsymbol\Psi_{u,h},\Psi_{p,h})\in \boldsymbol V_{f,h,0}\times Q_{f,h,0}$ be a suitable finite element
approximation of $(\boldsymbol\Psi_u,\Psi_p)$, where
$\boldsymbol V_{f,h,0}:=\boldsymbol V_{f,h}\cap \boldsymbol V_{f,0}$, $Q_{f,h,0}:=Q_{f,h}\cap Q_{f,0}$.
\textcolor{black}{By the Galerkin orthogonality} 
\begin{align}
\|\boldsymbol e_u\|_{L^2(\Omega_f)}^2
&=
2\mu_f\bigl(\boldsymbol D(\boldsymbol e_u),\boldsymbol D(\boldsymbol\Psi_u-\boldsymbol\Psi_{u,h})\bigr)_f
-(\Psi_p-\Psi_{p,h},\nabla\cdot \boldsymbol e_u)_f
\notag
\\
&+(\nabla\cdot (\boldsymbol\Psi_u-\boldsymbol\Psi_{u,h}),e_p)_f 
+\gamma\langle \boldsymbol P_f(\boldsymbol e_u),\boldsymbol P_f(\boldsymbol\Psi_u-\boldsymbol\Psi_{u,h})\rangle_\Gamma
\notag
\\
&+L\langle \boldsymbol e_u\cdot \boldsymbol n_f,
(\boldsymbol\Psi_u-\boldsymbol\Psi_{u,h})\cdot \boldsymbol n_f\rangle_\Gamma.
\notag
\end{align}
\textcolor{black}{Employing the Cauchy--Schwarz inequality and the trace inequality from Lemma~\ref{lemma4p2}} we get
\begin{align}
\|\boldsymbol e_u\|_{L^2(\Omega_f)}^2
&\le
C\|\boldsymbol e_u\|_{H^1(\Omega_f)}
\|\boldsymbol\Psi_u-\boldsymbol\Psi_{u,h}\|_{H^1(\Omega_f)}
+C\|\boldsymbol e_u\|_{H^1(\Omega_f)}
\|\Psi_p-\Psi_{p,h}\|_{L^2(\Omega_f)} \nonumber \\
&\quad
+C\|e_p\|_{L^2(\Omega_f)}
\|\boldsymbol\Psi_u-\boldsymbol\Psi_{u,h}\|_{H^1(\Omega_f)}\nonumber \\
\le& 
{\small{C\Bigl(
\|\boldsymbol e_u\|_{H^1(\Omega_f)}
+\|e_p\|_{L^2(\Omega_f)}
\Bigr)
\Bigl(
\|\boldsymbol\Psi_u-\boldsymbol\Psi_{u,h}\|_{H^1(\Omega_f)}
+\|\Psi_p-\Psi_{p,h}\|_{L^2(\Omega_f)}
\Bigr).}}
\end{align}
\textcolor{black}{Now choose} $(\boldsymbol\Psi_{u,h},\Psi_{p,h})$ \textcolor{black}{such that}
\[
\|\boldsymbol\Psi_u-\boldsymbol\Psi_{u,h}\|_{H^1(\Omega_f)}
+\|\Psi_p-\Psi_{p,h}\|_{L^2(\Omega_f)}
\le
C h^r
\Bigl(
\|\boldsymbol\Psi_u\|_{H^{1+r}(\Omega_f)}
+\|\Psi_p\|_{H^r(\Omega_f)}
\Bigr).
\]
Then, the estimate provided in the regularity assumption on the solution to the dual Stokes problem, stated in Lemma~\ref{LemmaReg2}:
\[
\|\boldsymbol\Psi_u\|_{H^{1+r}(\Omega_f)}
+\|\Psi_p\|_{H^r(\Omega_f)}
\le
C\|\boldsymbol e_u\|_{L^2(\Omega_f)},
\]
together with
\[
\|\boldsymbol e_u\|_{H^1(\Omega_f)}
+\|e_p\|_{L^2(\Omega_f)}
\le
C h^k
\left(
\|\boldsymbol u\|_{H^{k+1}(\Omega_f)}
+
\|p\|_{H^k(\Omega_f)}
\right),
\]
\textcolor{black}{yield the conclusion of the lemma.}
\end{proof}
\end{itemize}

\paragraph{Ritz projection for Biot structure displacement}
We introduce the following Ritz projection $\boldsymbol{\Pi}_\eta:\boldsymbol{V}_p\to\boldsymbol{V}_{p,h}$ 
for the Biot structure displacement:
\begin{equation}\label{eq:proj-structure}
\begin{array}{ll}
&~~~~2 \mu_p\left(\boldsymbol{D}\left(\boldsymbol{\Pi}_\eta \boldsymbol{\eta}^n\right), \boldsymbol{D}(\boldsymbol{\zeta})\right)_p
+\lambda_p\left(\nabla \cdot \boldsymbol{\Pi}_\eta \boldsymbol{\eta}^n, \nabla \cdot \boldsymbol{\zeta}\right)_p  \\
&= 2 \mu_p\left(\boldsymbol{D}\left(\boldsymbol{\eta}^n\right), \boldsymbol{D}(\boldsymbol{\zeta})\right)_p+\lambda_p\left(\nabla \cdot \boldsymbol{\eta}^n, \nabla \cdot \boldsymbol{\zeta}\right)_p, \quad\quad\quad\forall \zeta \in V_{p, h}.
\end{array}
\end{equation}
{\sunny{Similarly as before, we have the following approximation estimates:}}
\vskip 0.1in
\begin{itemize}[leftmargin=12pt,labelsep=0.5em]
\item{\emph{Approximation estimate 3:}}
For sufficiently smooth $\boldsymbol{\eta}$, the following holds \cite{Ciarlet2002, Brenner2008, Chen2024}:
\begin{align}
    \label{eq:proj-error-structure}
    \|\boldsymbol{\eta}-\boldsymbol{\Pi}_\eta\boldsymbol{\eta}\|_{H^1(\Omega_p)}
  \le C h^k \|\boldsymbol{\eta}\|_{H^{k+1}(\Omega_p)}.
\end{align}
\item{\emph{Approximation estimate 4:}}
\textcolor{black}{Let $V_{p,0}$ be the subspace of $V_p$ with vanishing trace on $\partial \Omega_p$. {\sunny{Suppose}} that solutions of the elasticity equations with the homogeneous Dirichlet boundary condition are $(r+1)$-regular on $\Omega_p$ ($0<r\le 1$) {\sunny{in the sense that}}  the solution $\boldsymbol{\Psi}_{\eta} \in V_{p,0}$ of 
\begin{align*}
    2 \mu_p\left(\boldsymbol{D}\left(\boldsymbol{\Psi}_\eta \right), \boldsymbol{D}(\boldsymbol{\zeta})\right)_p
+\lambda_p\left(\nabla \cdot \boldsymbol{\Psi}_\eta , \nabla \cdot \boldsymbol{\zeta}\right)_p = (\boldsymbol{g}, \boldsymbol{\zeta})_p
\end{align*}
satisfies $\|\boldsymbol{\Psi}_{\eta}\|_{H^{1+r}(\Omega_p)} \le C \|\boldsymbol{g}\|_{L^2(\Omega_p)}$ for $C>0$ depending on $\Omega_p$. 
\vskip 0.1in
Then, 
\begin{align}
    \label{eq:proj-L2-error-structure}
    \|\boldsymbol{\eta}-\boldsymbol{\Pi}_\eta\boldsymbol{\eta}\|_{L^2(\Omega_p)}
  \le C h^{k+r} \|\boldsymbol{\eta}\|_{H^{k+1}(\Omega_p)}.
\end{align}
}
{\sunny{See \cite{Ciarlet2002, Brenner2008} for more details.}}
\end{itemize}
\vskip 0.1in
\paragraph{Ritz projection for Biot pore pressure }
We introduce the following Ritz projection $\Pi_\phi:Q_p\to Q_{p,h}$ for the Biot pore pressure:
\begin{equation}\label{eq:proj-pressure}
\left(K \nabla \Pi_\phi \phi^n, \nabla \psi\right)_p+\frac{1}{L}\left\langle\Pi_\phi \phi^n, \psi\right\rangle_{\Gamma}=\left(K \nabla \phi^n, \nabla \psi\right)_p+\frac{1}{L}\left\langle\phi^n, \psi\right\rangle_{\Gamma}, \quad \forall \psi \in Q_{p, h} .
\end{equation}
{\sunny{Similarly as before, we have the following approximation estimates:}}
\vskip 0.1in
\begin{itemize}[leftmargin=12pt,labelsep=0.5em]
\item{\emph{Approximation estimate 5:}}
Standard elliptic projection estimates yield:
\begin{align}
    \label{eq:proj-error-pressure}
    \|\phi-\Pi_\phi \phi\|_{H^1(\Omega_p)}
    \le C h^k \|\phi\|_{H^{k+1}(\Omega_p)}. 
\end{align}
\item{\emph{Approximation estimate 6:}}
The $(r+1)$-regularity for Poisson equation on $\Omega_p$ with the homogeneous Dirichlet boundary condition and  $0<r\le 1$  can be defined similarly as before. 
In the same vein, if the $(r+1)$-regularity holds for Poisson equation, then 
\begin{align}
    \label{eq:improved-L2-approx}
    \|\phi-\Pi_\phi \phi\|_{L^2(\Omega_p)} \le C h^{k+r} \|\phi\|_{H^{k+1}(\Omega_p)}.
\end{align}
\end{itemize}
For simplicity of presentation in the rest of this paper we assume that the $r$-regularity assumption for the Stokes, elasticity, and Poisson equations holds 
for the same $0<r\le 1$.

{\sunny{We conclude this section  by summarizing the trace and approximation inequalities that will be used later in this manuscript.
}}
\paragraph{Trace and approximation inequalities}

For all ${v}\in H^{1}(\Omega_*)$
with $\Omega_*\in\{\Omega_f,\Omega_p\}$,
the standard trace inequality holds (cf.~\cite[(1.6.2)]{Brenner2008}):
\begin{equation}\label{eq:trace}
    \|{v}\|_{L^2(\Gamma)} \le C \|{v}\|_{H^1(\Omega_*)}^{\frac 12} \|{v}\|_{L^2(\Omega_*)}^{\frac 12} \le C \|{v}\|_{H^1(\Omega_*)}.
\end{equation}
Assuming that the Poincare inequality and Korn's inequality are available, we can derive  
\begin{align}
    \label{eq:scalar-trace}
    \|{v}\|_{L^2(\Gamma)} &\le C \|\nabla v \|_{L^2(\Omega_*)}^{\frac 12} \|{v}\|_{L^2(\Omega_*)}^{\frac 12}, \qquad v \in H^1(\Omega_*),
    \\
    \label{eq:vector-trace}
    \|\boldsymbol{v}\|_{L^2(\Gamma)} &\le C \|\boldsymbol{D}(\boldsymbol{v})\|_{L^2(\Omega_*)}^{\frac 12} \|\boldsymbol{v}\|_{L^2(\Omega_*)}^{\frac 12}, \qquad \boldsymbol{v} \in H^1(\Omega_*)^d .
\end{align}
Combining~\eqref{eq:scalar-trace} and \eqref{eq:vector-trace} with the projection estimates
\eqref{eq:proj-error-fluid}--\eqref{eq:improved-L2-approx}
gives:
\begin{equation}\label{eq:trace-approx}
\|\boldsymbol{v}-\boldsymbol{\Pi}\boldsymbol{v}\|_{L^2(\Gamma)}
  \le C h^k \|\boldsymbol{v}\|_{H^{k+1}(\Omega_*)},
\end{equation}
for the projections
$\boldsymbol{\Pi}=\boldsymbol{\Pi}_{u}, \boldsymbol{\Pi}_{\eta}, \Pi_p, \Pi_\phi$.
These estimates will be used repeatedly in the error analysis
in Section~\ref{sec:error}.

\section{Error Equations}
\label{sec:error}

In this section we derive the error equations
satisfied by the discrete solutions of
\eqref{eq:fluid-discrete}--\eqref{eq:poro-discrete}.
The starting point is to subtract the fully discrete scheme
from the corresponding continuous problem
\eqref{eq:continuous-Stokes-eq}--\eqref{eq:continuous-poroelasticity-eq} evaluated at the time level $t_{n+1}$.
The projection operators introduced in
Section~\ref{sec:projections}
will be used to eliminate interpolation terms from the bilinear forms.

\subsection{Error decomposition}

{\sunny{For each variable, we decompose the total error into the sum of an interpolation error and a discretization error. 
The superscript $I$ denotes the projection (interpolation) error
and the superscript $h$ the discrete (numerical) error:}}
%
\begin{equation}\label{eq:error-split}
\begin{aligned}
\boldsymbol{e}_u^{\,n+1}
  &= \boldsymbol{u}^{n+1} - \boldsymbol{u}_h^{\,n+1}
   = (\boldsymbol{u}^{n+1}-\boldsymbol{\Pi}_u\boldsymbol{u}^{n+1})
     + (\boldsymbol{\Pi}_u\boldsymbol{u}^{n+1}-\boldsymbol{u}_h^{\,n+1})
     \\
     &\qquad\qquad\qquad\quad
     =: -\boldsymbol{e}_u^{I,n+1} + \boldsymbol{e}_u^{h,n+1},\\[0.3em]
e_p^{\,n+1}
  &= p^{n+1} - p_h^{\,n+1}
   = -e_p^{I,n+1} + e_p^{h,n+1},\\[0.3em]
\boldsymbol{e}_\eta^{\,n+1}
  &= \boldsymbol{\eta}^{n+1} - \boldsymbol{\eta}_h^{\,n+1}
   = -\boldsymbol{e}_\eta^{I,n+1} + \boldsymbol{e}_\eta^{h,n+1},\\[0.3em]
\boldsymbol{e}_\xi^{\,n+1}
  &= \boldsymbol{\xi}^{n+1} - \boldsymbol{\xi}_h^{\,n+1}
   = -\boldsymbol{e}_\xi^{I,n+1} + \boldsymbol{e}_\xi^{h,n+1},\\[0.3em]
e_\varphi^{\,n+1}
  &= \varphi^{n+1} - \varphi_h^{\,n+1}
   = -e_\varphi^{I,n+1} + e_\varphi^{h,n+1}.
\end{aligned}
\end{equation}
Here, \textcolor{black}{we set $\boldsymbol{\Pi}_{\xi} = \boldsymbol{\Pi}_{\eta}$.} 
\subsection{Subtraction of continuous and discrete problems}

Subtracting the discrete fluid equation
\eqref{eq:fluid-discrete} from the continuous one
in~\eqref{eq:continuous-Stokes-eq} {\sunny{evaluated at $t^{n+1}$}}, and using the error decomposition
\eqref{eq:error-split}, we obtain
for all $(\boldsymbol{v}_h,q_h)\in
\boldsymbol{V}_{f,h}\times Q_{f,h}$:
\begin{align}
	\label{eq:Stokes-error-eq}
& \rho_{f}\left(\frac{e_{\boldsymbol{u}}^{n+1} - e_{\boldsymbol{u}}^{n}}{\Delta t}, \boldsymbol{v}\right)_{f} + 2\mu_{f}(\boldsymbol{D}(e_{\boldsymbol{u}}^{n+1}), \boldsymbol{D}(\boldsymbol{v}))_{f} - (e_{p}^{n+1}, \nabla\cdot \boldsymbol{v})_{f} + (\nabla\cdot e_{\boldsymbol{u}}^{n+1}, q)_{f} 
	\\
	\notag
	& + \gamma\langle P_{f}(e_{\boldsymbol{u}}^{n+1}), P_{f}(\boldsymbol{v})\rangle_{\Gamma} + L\langle e_{\boldsymbol{u}}^{n+1}\cdot \boldsymbol{n}_{f}, \boldsymbol{v}\cdot \boldsymbol{n}_{f}\rangle_{\Gamma} 
	\\
	\notag
	= ~& \langle \gamma P_{f}(\boldsymbol{\xi}^{n+1} - \boldsymbol{\xi}_h^{n}), P_{f}(\boldsymbol{v})\rangle_{\Gamma} 
	\\
	\notag
	& + \langle L (\boldsymbol{u}^{n+1}-\boldsymbol{u}_h^{n})\cdot \boldsymbol{n}_{f} - (\phi^{n+1} - \phi_h^n), \boldsymbol{v}\cdot \boldsymbol{n}_{f}\rangle_{\Gamma} + \rho_{f}\left(\frac{\boldsymbol{u}^{n+1} - \boldsymbol{u}^{n}}{\Delta t} - \partial_t \boldsymbol{u}^{n+1}, \boldsymbol{v}\right)_{f} .
\end{align}
Similarly, subtracting the discrete poroelastic problem
\eqref{eq:poro-discrete}  from its continuous counterpart \eqref{eq:continuous-poroelasticity-eq} {\sunny{evaluated at $t^{n+1}$}}
yields, for all $(\boldsymbol{\zeta}_h,\psi_h)
\in\boldsymbol{V}_{p,h}\times Q_{p,h}$,
\begin{align}
	\label{eq:poroelasticity-error-eq}
&\rho_{p}\left(\frac{e_{\boldsymbol{\xi}}^{n+1} - e_{\boldsymbol{\xi}}^{n}}{\Delta t}, \boldsymbol{\zeta}\right)_{p} + 2\mu_{p}(\boldsymbol{D}(e_{\boldsymbol{\eta}}^{n+1}), \boldsymbol{D}(\boldsymbol{\zeta}))_{p} + \lambda_{p}(\nabla\cdot e_{\boldsymbol{\eta}}^{n+1}, \nabla\cdot \boldsymbol{\zeta})_{p} 
	\\
	\notag
&- \alpha(e_{\phi}^{n+1}, \nabla\cdot \boldsymbol{\zeta})_{p} + C_{0}\left(\frac{e_{\phi}^{n+1} - e_{\phi}^{n}}{\Delta t}, \psi\right)_{p} + \alpha(\nabla\cdot e_{\boldsymbol{\xi}}^{n+1}, \psi)_{p} 
	\\
	\notag
&+ (K\nabla e_{\phi}^{n+1}, \nabla\psi)_{p} + \gamma\langle P_{p}(e_{\boldsymbol{\xi}}^{n+1}), P_{p}(\boldsymbol{\zeta})\rangle_{\Gamma} + \langle e_{\boldsymbol{\xi}}^{n+1}\cdot \boldsymbol{n}_{p}, \boldsymbol{\zeta}\cdot \boldsymbol{n}_{p}\rangle_{\Gamma} 
	\\
	\notag
&+ \langle e_{\phi}^{n+1}, \boldsymbol{\zeta}\cdot \boldsymbol{n}_{p}\rangle_{\Gamma} + \frac{1}{L}\langle e_{\phi}^{n+1}, \psi\rangle_{\Gamma} - \langle e_{\boldsymbol{\xi}}^{n+1}\cdot \boldsymbol{n}_{p}, \psi\rangle_{\Gamma} 
	\\
	\notag
=~& \langle \gamma P_{p}(\boldsymbol{u}^{n+1}- \boldsymbol{u}_h^{n}), P_{p}(\boldsymbol{\zeta})\rangle_{\Gamma} 
	\\
	\notag
& + \langle ( \boldsymbol{\xi}^{n+1}- \boldsymbol{\xi}_h^{n})\cdot \boldsymbol{n}_{p}, \boldsymbol{\zeta}\cdot \boldsymbol{n}_{p}\rangle_{\Gamma} + \langle -(\boldsymbol{u}^{n+1}-\boldsymbol{u}_h^{n})\cdot \boldsymbol{n}_{p} + ({\phi}^{n+1} - \phi_h^n)/L, \psi\rangle_{\Gamma} 
	\\
	\notag
& + \rho_{p}\left(\frac{\boldsymbol{\xi}^{n+1} - \boldsymbol{\xi}^{n}}{\Delta t} - \partial_t \boldsymbol{\xi}^{n+1} , \boldsymbol{\zeta}\right)_{p} + C_0 \left( \frac{ \phi^{n+1} - \phi^{n}}{\Delta t} - \partial_t \phi^{n+1} , \psi \right)_{p} .
\end{align}

Because the projections
$\boldsymbol{\Pi}_u$, $\boldsymbol{\Pi}_\eta$, and $\Pi_\phi$
are defined through the same bilinear forms as those used in the discrete scheme
(see Section~\ref{sec:projections}),
the terms involving only interpolation errors vanish. Therefore, the leading-order consistency errors are confined to
the time-discretization residuals
and the lagged interface data on the RHS, {\sunny{giving the following}} reduced error equations:

\begin{equation}
\begin{aligned}\label{eq:fluid-error-eq}
	&{\small{\rho_{f}\left(\frac{e_{\boldsymbol{u}}^{h,n+1} - e_{\boldsymbol{u}}^{h,n}}{\Delta t}, \boldsymbol{v}\right)_{f} + 2\mu_{f}(\boldsymbol{D}(e_{\boldsymbol{u}}^{h,n+1}), \boldsymbol{D}(\boldsymbol{v}))_{f} - (e_{p}^{h,n+1}, \nabla\cdot \boldsymbol{v})_{f} + (\nabla\cdot e_{\boldsymbol{u}}^{h,n+1}, q)_{f} 
    }}
	\\
	&+ \gamma\langle P_{f}(e_{\boldsymbol{u}}^{h,n+1}), P_{f}(\boldsymbol{v})\rangle_{\Gamma} + L\langle e_{\boldsymbol{u}}^{h,n+1}\cdot \boldsymbol{n}_{f}, \boldsymbol{v}\cdot \boldsymbol{n}_{f}\rangle_{\Gamma} 
	\\
	&= \langle \gamma P_{f}(\boldsymbol{\xi}^{n+1} - \boldsymbol{\xi}^{n} + e_{\boldsymbol{\xi}}^{h,n} - e_{\boldsymbol{\xi}}^{I,n}), P_{f}(\boldsymbol{v})\rangle_{\Gamma} 
	\\
	& + \langle L (\boldsymbol{u}^{n+1}-\boldsymbol{u}^{n} + e_{\boldsymbol{u}}^{h,n} - e_{\boldsymbol{u}}^{I,n})\cdot \boldsymbol{n}_{f} - (\phi^{n+1} - \phi^n + e_{\phi}^{h,n} -e_{\phi}^{I,n} ), \boldsymbol{v}\cdot \boldsymbol{n}_{f}\rangle_{\Gamma} 
	\\
	& + \rho_{f}\left(\frac{\Pi_{\boldsymbol{u}} \boldsymbol{u}^{n+1} - \Pi_{\boldsymbol{u}} \boldsymbol{u}^{n}}{\Delta t} - \partial_t \boldsymbol{u}^{n+1}, \boldsymbol{v}\right)_{f}
\end{aligned}
\end{equation}
and
\begin{equation}
\begin{aligned}\label{eq:poro-error-eq}
&{\small{\rho_{p}\left(\frac{e_{\boldsymbol{\xi}}^{h,n+1} - e_{\boldsymbol{\xi}}^{h,n}}{\Delta t}, \boldsymbol{\zeta}\right)_{p} 
\hskip -0.05in 
+ 2\mu_{p}(\boldsymbol{D}(e_{\boldsymbol{\eta}}^{h,n+1}), \boldsymbol{D}(\boldsymbol{\zeta}))_{p} + \lambda_{p}(\nabla\cdot e_{\boldsymbol{\eta}}^{h,n+1}, \nabla\cdot \boldsymbol{\zeta})_{p} 
- \alpha(e_{\phi}^{h,n+1}, \nabla\cdot \boldsymbol{\zeta})_{p}
}}\\	
&+ {\small{C_{0}\left(\frac{e_{\phi}^{h,n+1} - e_{\phi}^{h,n}}{\Delta t}, \psi\right)_{p} 
+ \alpha(\nabla\cdot e_{\boldsymbol{\xi}}^{h,n+1}, \psi)_{p} 
+ (K\nabla e_{\phi}^{h,n+1}, \nabla\psi)_{p} 
+ \gamma\langle P_{p}(e_{\boldsymbol{\xi}}^{h,n+1}), P_{p}(\boldsymbol{\zeta})\rangle_{\Gamma}}} \\
	&
+ \langle e_{\boldsymbol{\xi}}^{h,n+1}\cdot \boldsymbol{n}_{p}, \boldsymbol{\zeta}\cdot \boldsymbol{n}_{p}\rangle_{\Gamma} 
+ \langle e_{\phi}^{h,n+1}, \boldsymbol{\zeta}\cdot \boldsymbol{n}_{p}\rangle_{\Gamma} + \frac{1}{L}\langle e_{\phi}^{h,n+1}, \psi\rangle_{\Gamma} - \langle e_{\boldsymbol{\xi}}^{h,n+1}\cdot \boldsymbol{n}_{p}, \psi\rangle_{\Gamma} 
	\\
	&= - \alpha(e_{\phi}^{I,n+1}, \nabla\cdot \boldsymbol{\zeta})_{p} + \alpha(\nabla\cdot e_{\boldsymbol{\xi}}^{I,n+1}, \psi)_{p} + \gamma \langle \boldsymbol{P}_p (e_{\boldsymbol{\xi}}^{I,n+1}), \boldsymbol{P}_p( \boldsymbol{\zeta}) \rangle_{\Gamma}
	\\
	& + \langle e_{\boldsymbol{\xi}}^{I,n+1}\cdot \boldsymbol{n}_{p}, \boldsymbol{\zeta}\cdot \boldsymbol{n}_{p}\rangle_{\Gamma} + \langle e_{\phi}^{I,n+1}, \boldsymbol{\zeta}\cdot \boldsymbol{n}_{p}\rangle_{\Gamma} - \langle e_{\boldsymbol{\xi}}^{I,n+1}\cdot \boldsymbol{n}_{p}, \psi\rangle_{\Gamma} 
	\\
	& + \langle \gamma P_{p}((\boldsymbol{u}^{n+1}- \boldsymbol{u}^{n}) + e_{\boldsymbol{u}}^{h,n} - e_{\boldsymbol{u}}^{I,n}), P_{p}(\boldsymbol{\zeta})\rangle_{\Gamma} 
	\\
	& + \langle ( \boldsymbol{\xi}^{n+1}- \boldsymbol{\xi}^{n} + e_{\boldsymbol{\xi}}^{h,n} - e_{\boldsymbol{\xi}}^{I,n})\cdot \boldsymbol{n}_{p}, \boldsymbol{\zeta}\cdot \boldsymbol{n}_{p}\rangle_{\Gamma} 
	\\
	& + \langle -(\boldsymbol{u}^{n+1}-\boldsymbol{u}^{n} + e_{\boldsymbol{u}}^{h,n} - e_{\boldsymbol{u}}^{I,n})\cdot \boldsymbol{n}_{p} + ({\phi}^{n+1} - \phi^{n} + e_{\phi}^{h,n} - e_{\phi}^{I,n} )/L, \psi\rangle_{\Gamma} 
	\\
	& + \rho_{p}\left(\frac{\Pi_{ \boldsymbol{\eta}} \boldsymbol{\xi}^{n+1} - \Pi_{ \boldsymbol{\eta}} \boldsymbol{\xi}^{n}}{\Delta t} - \partial_t \boldsymbol{\xi}^{n+1} , \boldsymbol{\zeta}\right)_{p} + C_{0}\left(\frac{\Pi_{\phi} {\phi}^{n+1} - \Pi_{\phi} {\phi}^{n}}{\Delta t} - \partial_t \phi^{n+1}, \psi\right)_{p} .
\end{aligned}
\end{equation}

\section{Discrete Error Energy Balance}
\label{sec:energy}
We now derive a discrete energy balance {\sunny{satisfied by the errors}}, which is the foundation
for the apriori error estimate.
{\sunny{We start}} by testing the error equations
\eqref{eq:fluid-error-eq}--\eqref{eq:poro-error-eq} with the discrete errors $\boldsymbol{v}=e_{\boldsymbol{u}}^{h, n+1}, q=e_p^{h, n+1}, \boldsymbol{\zeta}=e_{\xi}^{h, n+1}, \psi=e_\phi^{h, n+1}$.
We then add the resulting equations, and multiply the sum by $2 \Delta t$. The resulting sum is:
\begin{equation*}
\begin{aligned}
&2 \rho_{f}\left(e_{\boldsymbol{u}}^{h,n+1} - e_{\boldsymbol{u}}^{h,n}, e_{\boldsymbol{u}}^{h,n+1}\right)_{f} + 2 \Delta t \cdot 2\mu_{f}(\boldsymbol{D}(e_{\boldsymbol{u}}^{h,n+1}), \boldsymbol{D}(e_{\boldsymbol{u}}^{h,n+1}))_{f} 
	\\
	&- (e_{p}^{h,n+1}, \nabla\cdot e_{\boldsymbol{u}}^{h,n+1})_{f} + (\nabla\cdot e_{\boldsymbol{u}}^{h,n+1}, e_{p}^{h,n+1})_{f} 
	\\
	&+ 2\Delta t \gamma\langle P_{f}(e_{\boldsymbol{u}}^{h,n+1}), P_{f}(e_{\boldsymbol{u}}^{h,n+1})\rangle_{\Gamma} + 2\Delta t L\langle e_{\boldsymbol{u}}^{h,n+1}\cdot \boldsymbol{n}_{f}, e_{\boldsymbol{u}}^{h,n+1}\cdot \boldsymbol{n}_{f}\rangle_{\Gamma} 
	\\
	&+ 2 \rho_{p}\left(e_{\boldsymbol{\xi}}^{h,n+1} - e_{\boldsymbol{\xi}}^{h,n}, e_{\boldsymbol{\xi}}^{h,n+1}\right)_{p} + 2\cdot 2\mu_{p}(\boldsymbol{D}(e_{\boldsymbol{\eta}}^{h,n+1}), \boldsymbol{D}(e_{\boldsymbol{\eta}}^{h,n+1} - e_{\boldsymbol{\eta}}^{h,n}))_{p} 
	\\
	&+ 2 \lambda_{p}(\nabla\cdot e_{\boldsymbol{\eta}}^{h,n+1}, \nabla\cdot (e_{\boldsymbol{\eta}}^{h,n+1} - e_{\boldsymbol{\eta}}^{h,n}) )_{p} - 2 \Delta t \alpha(e_{\phi}^{h,n+1}, \nabla\cdot e_{\boldsymbol{\xi}}^{h,n+1})_{p} 
\\
	&+ {\small{2 C_{0}\left(e_{\phi}^{h,n+1} - e_{\phi}^{h,n}, e_{\phi}^{h,n+1}\right)_{p} + 2 \Delta t \alpha(\nabla\cdot e_{\boldsymbol{\xi}}^{h,n+1}, e_{\phi}^{h,n+1})_{p} + 2 \Delta t  (K\nabla e_{\phi}^{h,n+1}, \nabla e_{\phi}^{h,n+1})_{p}
    }}
	\\
	&+ 2 \Delta t \gamma\langle P_{p}(e_{\boldsymbol{\xi}}^{h,n+1}), P_{p}(e_{\boldsymbol{\xi}}^{h,n+1})\rangle_{\Gamma} + 2 \Delta t \langle e_{\boldsymbol{\xi}}^{h,n+1}\cdot \boldsymbol{n}_{p}, e_{\boldsymbol{\xi}}^{h,n+1}\cdot \boldsymbol{n}_{p}\rangle_{\Gamma} 
	\\
	&+ 2 \Delta t \langle e_{\phi}^{h,n+1}, e_{\boldsymbol{\xi}}^{h,n+1}\cdot \boldsymbol{n}_{p}\rangle_{\Gamma} + 2 \Delta t \frac{1}{L}\langle e_{\phi}^{h,n+1}, e_{\phi}^{h,n+1}\rangle_{\Gamma} - 2 \Delta t \langle e_{\boldsymbol{\xi}}^{h,n+1}\cdot \boldsymbol{n}_{p}, e_{\phi}^{h,n+1}\rangle_{\Gamma} 
		\end{aligned}
\end{equation*}
\begin{equation*}
\begin{aligned}
	&= 2\Delta t \langle \gamma P_{f}(\boldsymbol{\xi}^{n+1} - \boldsymbol{\xi}^{n} + e_{\boldsymbol{\xi}}^{h,n} - e_{\boldsymbol{\xi}}^{I,n}), P_{f}(e_{\boldsymbol{u}}^{h,n+1})\rangle_{\Gamma} 
\\  
	& + 2\Delta t \langle L (\boldsymbol{u}^{n+1}-\boldsymbol{u}^{n} + e_{\boldsymbol{u}}^{h,n} - e_{\boldsymbol{u}}^{I,n})\cdot \boldsymbol{n}_{f} - (\phi^{n+1} - \phi^n + e_{\phi}^{h,n} -e_{\phi}^{I,n} ), e_{\boldsymbol{u}}^{h,n+1}\cdot \boldsymbol{n}_{f}\rangle_{\Gamma} 
\\
	& + 2\Delta t \rho_{f}\left(\frac{\Pi_{\boldsymbol{u}} \boldsymbol{u}^{n+1} - \Pi_{\boldsymbol{u}} \boldsymbol{u}^{n}}{\Delta t} - \partial_t \boldsymbol{u}^{n+1}, e_{\boldsymbol{u}}^{h,n+1}\right)_{f}
	\\
	& - {\small{2\Delta t \alpha(e_{\phi}^{I,n+1}, \nabla\cdot e_{\boldsymbol{\xi}}^{h,n+1})_{p} + 2\Delta t \alpha(\nabla\cdot e_{\boldsymbol{\xi}}^{I,n+1}, e_{\phi}^{h,n+1})_{p} + 2\Delta t \gamma \langle \boldsymbol{P}_p (e_{\boldsymbol{\xi}}^{I,n+1}), \boldsymbol{P}_p(e_{\boldsymbol{\xi}}^{h,n+1}) \rangle_{\Gamma}
    }}
	\\
	& + {\small{2\Delta t \langle e_{\boldsymbol{\xi}}^{I,n+1}\cdot \boldsymbol{n}_{p}, e_{\boldsymbol{\xi}}^{h,n+1}\cdot \boldsymbol{n}_{p}\rangle_{\Gamma} + 2\Delta t \langle e_{\phi}^{I,n+1}, e_{\boldsymbol{\xi}}^{h,n+1}\cdot \boldsymbol{n}_{p}\rangle_{\Gamma} - 2\Delta t \langle e_{\boldsymbol{\xi}}^{I,n+1}\cdot \boldsymbol{n}_{p}, e_{\phi}^{h,n+1}\rangle_{\Gamma}
    }}
	\\
	& + 2\Delta t \langle \gamma P_{p}((\boldsymbol{u}^{n+1}- \boldsymbol{u}^{n}) + e_{\boldsymbol{u}}^{h,n} - e_{\boldsymbol{u}}^{I,n}), P_{p}(e_{\boldsymbol{\xi}}^{h,n+1})\rangle_{\Gamma} 
	\\
	& + 2\Delta t \langle ( \boldsymbol{\xi}^{n+1}- \boldsymbol{\xi}^{n} + e_{\boldsymbol{\xi}}^{h,n} - e_{\boldsymbol{\xi}}^{I,n})\cdot \boldsymbol{n}_{p}, e_{\boldsymbol{\xi}}^{h,n+1}\cdot \boldsymbol{n}_{p}\rangle_{\Gamma} 
	\\
	& + 2\Delta t \langle -(\boldsymbol{u}^{n+1}-\boldsymbol{u}^{n} + e_{\boldsymbol{u}}^{h,n} - e_{\boldsymbol{u}}^{I,n})\cdot \boldsymbol{n}_{p} + ({\phi}^{n+1} - \phi^{n} + e_{\phi}^{h,n} - e_{\phi}^{I,n} )/L, e_{\phi}^{h,n+1}\rangle_{\Gamma} 
	\\
	& + 2\Delta t \rho_{p}\left(\frac{\Pi_{ \boldsymbol{\eta}} \boldsymbol{\xi}^{n+1} - \Pi_{ \boldsymbol{\eta}} \boldsymbol{\xi}^{n}}{\Delta t} - \partial_t \boldsymbol{\xi}^{n+1} , e_{\boldsymbol{\xi}}^{h,n+1}\right)_{p}\\
	& + 2\Delta t C_{0}\left(\frac{\Pi_{\phi} {\phi}^{n+1} - \Pi_{\phi} {\phi}^{n}}{\Delta t} - \partial_t \phi^{n+1}, e_{\phi}^{h,n+1}\right)_{p} .
\end{aligned}
\end{equation*}
By using the relation $e_{\boldsymbol{\xi}}^{h,n+1} = (e_{\boldsymbol{\eta}}^{h,n+1} - e_{\boldsymbol{\eta}}^{h,n})/\Delta t$ and applying the polarization identity:
$$
2(a-b, a)=\|a\|^2-\|b\|^2+\|a-b\|^2,
$$
we can rewrite the above equation as follows: 
\begin{equation}
\begin{aligned}\label{eq:error_energy}
	&\|e_{\boldsymbol{u}}^{h,n+1}\|_{\rho_f, \Omega_f}^2 - \|e_{\boldsymbol{u}}^{h,n}\|_{\rho_f, \Omega_f}^2 + \|e_{\boldsymbol{u}}^{h,n+1}-e_{\boldsymbol{u}}^{h,n}\|_{\rho_f, \Omega_f}^2
	\\
	&+ 2\Delta t \|\boldsymbol{D}(e_{\boldsymbol{u}}^{h,n+1})\|_{2\mu_f, \Omega_f}^2 + {2\Delta t \|\boldsymbol{P}_f(e_{\boldsymbol{u}}^{h,n+1})\|_{\gamma, \Gamma}^2} + {2\Delta t \|e_{\boldsymbol{u}}^{h,n+1} \cdot \boldsymbol{n}_f \|_{L, \Gamma}^2 } 
	\\
	&+\|e_{\boldsymbol{\xi}}^{h,n+1}\|_{\rho_p, \Omega_p}^2 - \|e_{\boldsymbol{\xi}}^{h,n}\|_{\rho_p, \Omega_p}^2 + \|e_{\boldsymbol{\xi}}^{h,n+1}-e_{\boldsymbol{\xi}}^{h,n}\|_{\rho_p, \Omega_p}^2
	\\
	&+\|\boldsymbol{D}(e_{\boldsymbol{\eta}}^{h,n+1})\|_{2\mu_p, \Omega_p}^2 - \|\boldsymbol{D}(e_{\boldsymbol{\eta}}^{h,n})\|_{2\mu_p, \Omega_p}^2 + \|\boldsymbol{D}(e_{\boldsymbol{\eta}}^{h,n+1})-\boldsymbol{D}(e_{\boldsymbol{\eta}}^{h,n})\|_{2\mu_p, \Omega_p}^2
	\\
	&+\|\nabla \cdot e_{\boldsymbol{\eta}}^{h,n+1}\|_{\lambda_p, \Omega_p}^2 - \|\nabla \cdot e_{\boldsymbol{\eta}}^{h,n}\|_{\lambda_p, \Omega_p}^2 + \|\nabla \cdot (e_{\boldsymbol{\eta}}^{h,n+1}-e_{\boldsymbol{\eta}}^{h,n})\|_{\lambda_p, \Omega_p}^2
	\\
	&+\|e_{\phi}^{h,n+1}\|_{C_0, \Omega_p}^2 - \|e_{\phi}^{h,n}\|_{C_0, \Omega_p}^2 + \|e_{\phi}^{h,n+1}-e_{\phi}^{h,n}\|_{C_0, \Omega_p}^2
	\\
	&+2\Delta t (\|\nabla e_{\phi}^{h,n+1}\|_{K, \Omega_p}^2 +  {\|\boldsymbol{P}_p (e_{\boldsymbol{\xi}}^{h,n+1})\|_{\gamma, \Gamma}^2} +  {\|e_{\boldsymbol{\xi}}^{h,n+1} \cdot \boldsymbol{n}_p\|_{\Gamma}^2} +  {\|e_{\phi}^{h,n+1}\|_{L^{-1}, \Gamma}^2 } )
	\\
	&= 2\Delta t (\langle \gamma P_{f}(\boldsymbol{\xi}^{n+1} - \boldsymbol{\xi}^{n} +  \underbrace{e_{\boldsymbol{\xi}}^{h,n}} - e_{\boldsymbol{\xi}}^{I,n}), P_{f}(e_{\boldsymbol{u}}^{h,n+1})\rangle_{\Gamma} )
    \\
    &+ 2\Delta t ( \langle L (\boldsymbol{u}^{n+1}-\boldsymbol{u}^{n} +  \underbrace{e_{\boldsymbol{u}}^{h,n}} - e_{\boldsymbol{u}}^{I,n})\cdot \boldsymbol{n}_{f} - (\phi^{n+1} - \phi^n +  {e_{\phi}^{h,n}} -e_{\phi}^{I,n} ), e_{\boldsymbol{u}}^{h,n+1}\cdot \boldsymbol{n}_{f}\rangle_{\Gamma} )
	\\
	& + 2\Delta t \rho_{f}\left(\frac{\Pi_{\boldsymbol{u}} \boldsymbol{u}^{n+1} - \Pi_{\boldsymbol{u}} \boldsymbol{u}^{n}}{\Delta t} - \partial_t \boldsymbol{u}^{n+1}, e_{\boldsymbol{u}}^{h,n+1}\right)_{f}
	\\
	& + 2\Delta t (- \alpha(e_{\phi}^{I,n+1}, \nabla\cdot e_{\boldsymbol{\xi}}^{h,n+1})_{p} + \alpha(\nabla\cdot e_{\boldsymbol{\xi}}^{I,n+1}, e_{\phi}^{h,n+1})_{p} )
\\
	& + 2 \Delta t \gamma \langle \boldsymbol{P}_p(e_{\boldsymbol{\xi}}^{I,n+1}), \boldsymbol{P}_p(e_{\boldsymbol{\xi}}^{h,n+1}) \rangle_{\Gamma}
	\\
	& + 2\Delta t (\langle e_{\boldsymbol{\xi}}^{I,n+1}\cdot \boldsymbol{n}_{p}, e_{\boldsymbol{\xi}}^{h,n+1}\cdot \boldsymbol{n}_{p}\rangle_{\Gamma} + \langle e_{\phi}^{I,n+1}, e_{\boldsymbol{\xi}}^{h,n+1}\cdot \boldsymbol{n}_{p}\rangle_{\Gamma} - \langle e_{\boldsymbol{\xi}}^{I,n+1}\cdot \boldsymbol{n}_{p}, e_{\phi}^{h,n+1}\rangle_{\Gamma} )
	\\
	& + 2\Delta t (\langle \gamma P_{p}((\boldsymbol{u}^{n+1}- \boldsymbol{u}^{n}) +  \underbrace{e_{\boldsymbol{u}}^{h,n}} - e_{\boldsymbol{u}}^{I,n}), P_{p}(e_{\boldsymbol{\xi}}^{h,n+1})\rangle_{\Gamma} )
	\\
	& + 2\Delta t (\langle ( \boldsymbol{\xi}^{n+1}- \boldsymbol{\xi}^{n} +  \underbrace{e_{\boldsymbol{\xi}}^{h,n}} - e_{\boldsymbol{\xi}}^{I,n})\cdot \boldsymbol{n}_{p}, e_{\boldsymbol{\xi}}^{h,n+1}\cdot \boldsymbol{n}_{p}\rangle_{\Gamma} )
\end{aligned}
\end{equation}       
\begin{equation*}
\begin{aligned}
	& + 2\Delta t (\langle -(\boldsymbol{u}^{n+1}-\boldsymbol{u}^{n} +  \underbrace{e_{\boldsymbol{u}}^{h,n}} - e_{\boldsymbol{u}}^{I,n})\cdot \boldsymbol{n}_{p} + ({\phi}^{n+1} - \phi^{n} +  \underbrace{e_{\phi}^{h,n}} - e_{\phi}^{I,n} )/L, e_{\phi}^{h,n+1}\rangle_{\Gamma} )
	\\
	& + 2\Delta t \rho_{p}\left(\frac{\Pi_{ \boldsymbol{\eta}} \boldsymbol{\xi}^{n+1} - \Pi_{ \boldsymbol{\eta}} \boldsymbol{\xi}^{n}}{\Delta t} - \partial_t \boldsymbol{\xi}^{n+1} , e_{\boldsymbol{\xi}}^{h, n+1}\right)_{p}\\
	& + 2\Delta t C_{0}\left(\frac{\Pi_{\phi} {\phi}^{n+1} - \Pi_{\phi} {\phi}^{n}}{\Delta t} - \partial_t \phi^{n+1}, e_{\phi}^{h,n+1}\right)_{p}.
\end{aligned}
\end{equation*}
{\sunny{Next, we collect all the boundary terms on the right hand-side 
involving $e_{\boldsymbol{u}}^{h, n}, e_{\boldsymbol{\xi}}^{h, n}$, and $e_\phi^{h, n}$ (underlined by curly brackets)
and apply the weighted polarization equality
\begin{equation}\label{eq:polarization-weighted}
2\langle a,b\rangle_{\Gamma,w}
=\|a\|_{\Gamma,w}^2+\|b\|_{\Gamma,w}^2-\|a-b\|_{\Gamma,w}^2
\end{equation}
with weights $w=\gamma$, $w=L$, and $w=L^{-1}$, where 
\[
\langle a,b\rangle_{\Gamma,w} := \int_{\Gamma} w\, a\, b \,ds
\]
is the weighted inner product. We move the resulting expressions to the left hand-side of the equations. 
Similarly, we collect all the boundary terms on the left hand-side involving $e_{\boldsymbol{u}}^{h, n+1}, e_{\boldsymbol{\xi}}^{h, n+1}$, and $e_\phi^{h, n+1}$ and use the weighted polarization equality \eqref{eq:polarization-weighted}.  
We combine the alike terms to obtain:
}}


%
\begin{itemize}
\item The terms involving tangential project of errors $e_{\boldsymbol{u}}$ and $e_{\boldsymbol{\xi}}$:
\begin{equation*}
\begin{aligned}
&\Delta t \|\boldsymbol{P}_f (e_{\boldsymbol{u}}^{h,n+1})\|_{\gamma,\Gamma}^2 + \Delta t \|\boldsymbol{P}_f (e_{\boldsymbol{u}}^{h,n+1}) - \boldsymbol{P}_p(e_{\boldsymbol{\xi}}^{h,n})\|_{\gamma,\Gamma}^2 - \Delta t \|\boldsymbol{P}_p (e_{\boldsymbol{\xi}}^{h,n})\|_{\gamma,\Gamma}^2 
\\
& + \Delta t \|\boldsymbol{P}_p (e_{\boldsymbol{\xi}}^{h,n+1})\|_{\gamma,\Gamma}^2 + \Delta t \|\boldsymbol{P}_p (e_{\boldsymbol{\xi}}^{h,n+1}) - \boldsymbol{P}_f(e_{\boldsymbol{u}}^{h,n})\|_{\gamma,\Gamma}^2 - \Delta t \|\boldsymbol{P}_f (e_{\boldsymbol{u}}^{h,n})\|_{\gamma,\Gamma}^2  
\end{aligned}
\end{equation*}
\item The terms involving error $e_{\boldsymbol{u}}$:
$$\Delta t \|e_{\boldsymbol{u}}^{h,n+1} \cdot \boldsymbol{n}_f\|_{L,\Gamma}^2 + \Delta t \|(e_{\boldsymbol{u}}^{h,n+1} - e_{\boldsymbol{u}}^{h,n}) \cdot \boldsymbol{n}_f\|_{L,\Gamma}^2 - \Delta t \|e_{\boldsymbol{u}}^{h,n} \cdot \boldsymbol{n}_f\|_{L,\Gamma}^2, 
$$
\item The terms involving error $e_{\boldsymbol{\xi}}$:
$$\Delta t \|e_{\boldsymbol{\xi}}^{h,n+1} \cdot \boldsymbol{n}_p\|_{\Gamma}^2 + \Delta t \|(e_{\boldsymbol{\xi}}^{h,n+1} - e_{\boldsymbol{\xi}}^{h,n}) \cdot \boldsymbol{n}_p\|_{\Gamma}^2 - \Delta t \|e_{\boldsymbol{\xi}}^{h,n} \cdot \boldsymbol{n}_p\|_{\Gamma}^2, 
$$
\item The terms involving error $e_{\phi}$:
$$\Delta t \|e_{\phi}^{h,n+1} \|_{L^{-1},\Gamma}^2 + \Delta t \|(e_{\phi}^{h,n+1} - e_{\phi}^{h,n}) \|_{L^{-1},\Gamma}^2 - \Delta t \|e_{\phi}^{h,n} \|_{L^{-1},\Gamma}^2.
$$
\end{itemize}

{\sunny{We also express the mixed interface term}}
$$
\begin{aligned}
&~2 \Delta t \langle e_{\phi}^{h,n}, e_{\boldsymbol{u}}^{h,n+1} \cdot \boldsymbol{n}_f \rangle_{\Gamma} + 2 \Delta t \langle e_{\boldsymbol{u}}^{h,n} \cdot \boldsymbol{n}_p, e_{\phi}^{h,n+1} \rangle_{\Gamma} 
\\
	=& ~2 \Delta t \langle e_{\phi}^{h,n}, (e_{\boldsymbol{u}}^{h,n+1} - e_{\boldsymbol{u}}^{h,n}) \cdot \boldsymbol{n}_f \rangle_{\Gamma} + 2 \Delta t \langle e_{\boldsymbol{u}}^{h,n} \cdot \boldsymbol{n}_p, e_{\phi}^{h,n+1} - e_{\phi}^{h,n} \rangle_{\Gamma} 
\\
	=&~ 2 \Delta t \langle e_{\phi}^{h,n} - e_{\phi}^{h,n+1}, e_{\boldsymbol{u}}^{h,n+1} \cdot \boldsymbol{n}_f \rangle_{\Gamma} + 2 \Delta t \langle (e_{\boldsymbol{u}}^{h,n} - e_{\boldsymbol{u}}^{h,n+1}) \cdot \boldsymbol{n}_p, e_{\phi}^{h,n+1} \rangle_{\Gamma}.    
\end{aligned}
$$
{\sunny{With these calculations, equation \eqref{eq:error_energy} can be rewritten as:
}}
\begin{equation}\label{eq:energy-compact}
   {\fbox{ 
   $
   X_{n+1}^2-X_n^2+Y_{n+1}^2+Z_{n+1}=F_{\boldsymbol{u}}^{n+1}\left(e_{\boldsymbol{u}}^{h, n+1}\right)+F_{\boldsymbol{\xi}}^{n+1}\left(e_{\boldsymbol{\xi}}^{h, n+1}\right)+F_\phi^{n+1}\left(e_\phi^{h, n+1}\right),
   $
   }}
\end{equation}
where:
\vskip 0.1in
\begin{itemize}[leftmargin=12pt,labelsep=0.5em]
\item {\bf{$X_n^2$ denotes the kinetic and elastic error energy at step $n$}}
plus the interface terms multiplied by $\Delta t$:
\begin{align}
    \notag
    X_n^2:= & {\small{
    \left\|e_{\boldsymbol{u}}^{h, n}\right\|_{\rho_f, \Omega_f}^2+\left\|e_{\boldsymbol{\xi}}^{h, n}\right\|_{\rho_p, \Omega_p}^2+\left\|\boldsymbol{D}\left(e_{\boldsymbol{\eta}}^{h, n}\right)\right\|_{2 \mu_p, \Omega_p}^2+\left\|\nabla \cdot e_{\boldsymbol{\eta}}^{h, n}\right\|_{\lambda_p, \Omega_p}^2+\left\|e_\phi^{h, n}\right\|_{C_0, \Omega_p}^2
    }}
    \\
    \label{eq:Xn-definition}
    & +\Delta t\left\|\boldsymbol{P}_f\left(e_{\boldsymbol{u}}^{h, n}\right)\right\|_{\gamma, \Gamma}^2+\Delta t\left\|\boldsymbol{P}_p\left(e_{\boldsymbol{\xi}}^{h, n}\right)\right\|_{\gamma, \Gamma}^2+\Delta t\left\|e_{\boldsymbol{u}}^{h, n} \cdot \boldsymbol{n}_f\right\|_{L, \Gamma}^2
    \\
    \notag
    & +\Delta t\left\|e_{\boldsymbol{\xi}}^{h, n} \cdot \boldsymbol{n}_p\right\|_{\Gamma}^2 +\Delta t\left\|e_\phi^{h, n}\right\|_{L^{-1}, \Gamma}^2,
\end{align}
\item {\bf{$Y_n^2$ is the sum of all increments plus dissipation}}:
\begin{align}
    \notag
    Y_n^2:= & {\small{\left\|e_{\boldsymbol{u}}^{h, n}-e_{\boldsymbol{u}}^{h, n-1}\right\|_{\rho_f, \Omega_f}^2+\left\|e_{\boldsymbol{\xi}}^{h, n}-e_{\boldsymbol{\xi}}^{h, n-1}\right\|_{\rho_p, \Omega_p}^2+\left\|\boldsymbol{D}\left(e_{\boldsymbol{\eta}}^{h, n}\right)-\boldsymbol{D}\left(e_{\boldsymbol{\eta}}^{h, n-1}\right)\right\|_{2 \mu_p, \Omega_p}^2
    }}
    \\
    \label{eq:Yn-definition}
    & +\left\|\nabla \cdot\left(e_{\boldsymbol{\eta}}^{h, n}-e_{\boldsymbol{\eta}}^{h, n-1}\right)\right\|_{\lambda_p, \Omega_p}^2+\left\|e_\phi^{h, n}-e_\phi^{h, n-1}\right\|_{C_0, \Omega_p}^2 
    \\
    \notag
    & +2 \Delta t\left\|\boldsymbol{D}\left(e_{\boldsymbol{u}}^{h, n}\right)\right\|_{2 \mu_f, \Omega_f}^2+2 \Delta t\left\|\nabla e_\phi^{h, n}\right\|_{K, \Omega_p}^2 
    \\
    \notag
    & +\Delta t\left\|\boldsymbol{P}_f\left(e_{\boldsymbol{u}}^{h, n}\right)-\boldsymbol{P}_p\left(e_{\boldsymbol{\xi}}^{h, n-1}\right)\right\|_{\gamma, \Gamma}^2+\Delta t\left\|\boldsymbol{P}_p\left(e_{\boldsymbol{\xi}}^{h, n}\right)-\boldsymbol{P}_f\left(e_{\boldsymbol{u}}^{h, n-1}\right)\right\|_{\gamma, \Gamma}^2 
    \\
    \notag
    & +\Delta t\left\|\left(e_{\boldsymbol{u}}^{h, n}-e_{\boldsymbol{u}}^{h, n-1}\right) \cdot \boldsymbol{n}_f\right\|_{L, \Gamma}^2+\Delta t\left\|\left(e_{\boldsymbol{\xi}}^{h, n}-e_{\boldsymbol{\xi}}^{h, n-1}\right) \cdot \boldsymbol{n}_p\right\|_{\Gamma}^2 
    \\
    \notag
    & +\Delta t\left\|e_\phi^{h, n}-e_\phi^{h, n-1}\right\|_{L^{-1}, \Gamma}^2
\end{align}
\item {\bf{$Z_n$ contains all the mixed terms}}:
$$
Z_n=2 \Delta t\left\langle e_\phi^{h, n-1},\left(e_{\boldsymbol{u}}^{h, n}-e_{\boldsymbol{u}}^{h, n-1}\right) \cdot \boldsymbol{n}_f\right\rangle_{\Gamma}+2 \Delta t\left\langle e_{\boldsymbol{u}}^{h, n-1} \cdot \boldsymbol{n}_p, e_\phi^{h, n}-e_\phi^{h, n-1}\right\rangle_{\Gamma},
$$
\item {\sunny{$F_{\boldsymbol{u}}^{n+1}$, $F_{\boldsymbol{\xi}}^{n+1}$ and $F_\phi^{n+1}$ {\bf{are the residual terms.}} They are  the functionals incorporating the remaining terms that do not represent error energy as they are given in terms of the approximate solutions themselves; they are grouped into three contributions, each from a different subproblem:}}

\begin{equation}
\begin{aligned}
    \label{F_u}
    F_{\boldsymbol{u}}^{n+1}&\left(e_{\boldsymbol{u}}^{h, n+1}\right)=  2 \Delta t\left\langle\gamma \boldsymbol{P}_f\left(\boldsymbol{\xi}^{n+1}-\boldsymbol{\xi}^n-e_{\boldsymbol{\xi}}^{I, n}\right), \boldsymbol{P}_f\left(e_{\boldsymbol{u}}^{h, n+1}\right)\right\rangle_{\Gamma} 
    \\
    & +2 \Delta t\left\langle L\left(\boldsymbol{u}^{n+1}-\boldsymbol{u}^n-e_{\boldsymbol{u}}^{I, n}\right) \cdot \boldsymbol{n}_f-\left(\phi^{n+1}-\phi^n-e_\phi^{I, n}\right), e_{\boldsymbol{u}}^{h, n+1} \cdot \boldsymbol{n}_f\right\rangle_{\Gamma} 
    \\
    & +2 \Delta t \rho_f\left(\frac{\Pi_{\boldsymbol{u}} \boldsymbol{u}^{n+1}-\Pi_{\boldsymbol{u}} \boldsymbol{u}^n}{\Delta t}-\partial_t \boldsymbol{u}^{n+1}, e_{\boldsymbol{u}}^{h, n+1}\right)_f,
\end{aligned}
\end{equation}
\begin{equation}
\begin{aligned}
    \label{F_xi}
    F_{\boldsymbol{\xi}}^{n+1}\left(e_{\boldsymbol{\xi}}^{h, n+1}\right)&=  
    {\small{2 \Delta t\left(-\alpha\left(e_\phi^{I, n+1}, \nabla \cdot e_{\boldsymbol{\xi}}^{h, n+1}\right)_p\right)+2 \Delta t\left(\left\langle e_{\boldsymbol{\xi}}^{I, n+1} \cdot \boldsymbol{n}_p, e_{\boldsymbol{\xi}}^{h, n+1} \cdot \boldsymbol{n}_p\right\rangle_{\Gamma}\right.
    }}
    \\
    &\left.+\left\langle e_\phi^{I, n+1}, e_{\boldsymbol{\xi}}^{h, n+1} \cdot \boldsymbol{n}_p\right\rangle_{\Gamma}\right)  +2 \Delta \gamma\left\langle \boldsymbol{P}_p\left(e_{\boldsymbol{\xi}}^{I, n+1}\right), \boldsymbol{P}_p\left(e_{\boldsymbol{\xi}}^{h, n+1}\right)\right\rangle_{\Gamma} 
    \\
    & +2 \Delta t\left\langle\gamma \boldsymbol{P}_p\left(\boldsymbol{u}^{n+1}-\boldsymbol{u}^n-e_{\boldsymbol{u}}^{I, n}\right), \boldsymbol{P}_p\left(e_{\boldsymbol{\xi}}^{h, n+1}\right)\right\rangle_{\Gamma} 
    \\
    & +2 \Delta t\left\langle\left(\boldsymbol{\xi}^{n+1}-\boldsymbol{\xi}^n-e_{\boldsymbol{\xi}}^{I, n}\right) \cdot \boldsymbol{n}_p, e_{\boldsymbol{\xi}}^{h, n+1} \cdot \boldsymbol{n}_p\right\rangle_{\Gamma} 
    \\
    & +2 \Delta t \rho_p\left(\frac{\Pi_{\boldsymbol{\eta}} \boldsymbol{\xi}^{n+1}-\Pi_{\boldsymbol{\eta}} \boldsymbol{\xi}^n}{\Delta t}-\partial_t \boldsymbol{\xi}^{n+1}, e_{\boldsymbol{\xi}}^{h, n+1}\right)_p,
\end{aligned}
\end{equation}

\begin{equation}
\begin{aligned}
F_\phi^{n+1}
\left(e_\phi^{h, n+1}\right)&=  2 \Delta t\left(\alpha\left(\nabla \cdot e_{\boldsymbol{\xi}}^{I, n+1}, e_\phi^{h, n+1}\right)_p-\left\langle e_{\boldsymbol{\xi}}^{I, n+1} \cdot \boldsymbol{n}_p, e_\phi^{h, n+1}\right\rangle_{\Gamma}\right) \\
& {\small{+2 \Delta t\left\langle-\left(\boldsymbol{u}^{n+1}-\boldsymbol{u}^n-e_{\boldsymbol{u}}^{I, n}\right) \cdot \boldsymbol{n}_p+\left(\phi^{n+1}-\phi^n-e_\phi^{I, n}\right) / L, e_\phi^{h, n+1}\right\rangle_{\Gamma}
}}
\\
& +2 \Delta t C_0\left(\frac{\Pi_\phi \phi^{n+1}-\Pi_\phi {\phi^{n}}}{\Delta t}-\partial_t \phi^{n+1}, e_\phi^{h, n+1}\right)_p.
\end{aligned}
\label{F_phi}
\end{equation}
\end{itemize}

Notice that all the interface contributions are included explicitly and remain nonnegative provided that $\gamma, L>0$.
The term $Z_{n+1}$ collects mixed products between
two consecutive time steps, which will later be bounded by Young’s inequality and absorbed into the left-hand side
of~\eqref{eq:energy-compact}. 

A summation of \eqref{eq:energy-compact} over index $n$ gives  {\sunny{the error energy equality:}}

\begin{equation}
    \label{eq:energy-compact-sum}
{\fbox{ $    X_{n}^2-X_0^2+\sum_{i=1}^n Y_{i}^2 + \sum_{i=1}^n Z_{i}= \sum_{i=1}^n \left[ F_{\boldsymbol{u}}^{i}\left(e_{\boldsymbol{u}}^{h,i}\right)+F_{\boldsymbol{\xi}}^{i}\left(e_{\boldsymbol{\xi}}^{h,i}\right)+F_\phi^{i}\left(e_\phi^{h, i}\right) \right]
$
}}
\end{equation}

for $n \ge 1$.

\section{{\emph{A priori}} Error Estimates}
\label{sec:estimate}
{\sunny{Based on the error energy equality presented above, and using discrete Gronwall's inequality, in this section we derive the desired {\emph{a priori}} energy estimates, from which we will show first-order accuracy in time, and optimal accuracy in space. 

We start by first estimating 
the right-hand side of the energy identity \eqref{eq:energy-compact-sum}, which contains the 
sum of the residual terms $F_{\boldsymbol{u}}^{i}(e_{\boldsymbol{u}}^{h, i})$, $F_{\boldsymbol{\xi}}^{i}(e_{\boldsymbol{\xi}}^{h, i})$, $F_{\phi}^{i}(e_{\phi}^{h, i})$, and show that they are bounded {by data-dependent quantities multiplying
$h^{2k+r} + (\Delta t)^2$ stemming from the spatial and temporal approximations. 
Then we estimate $Z_n$ comprized of  the mixed error terms and show that $Z_n$ is bounded by a quantity depending on the errors, multiplying $\Delta t$. Finally, we use a discrete Gronwall inequality argument} that will yield the final error estimate of the form 
\[
\max_{0\le n\le N} X_n
+ \Big( \sum_{n=1}^N Y_n^2 \Big)^{1/2}
\le
C \big( h^{k+\frac r2} + \Delta t \big).
\]
}}

Throughout this section we assume the exact solution satisfies the
following regularity (recall $\boldsymbol{\xi}=\partial_t\boldsymbol{\eta}$):
\begin{equation}\label{eq:regularity}
\begin{aligned}
    \boldsymbol{u}, \partial_t \boldsymbol{u}   &\in L^2(0,T; H^{k+1}(\Omega_f)^d),  &&\partial_{tt}\boldsymbol{u} \in L^{2}(0,T; {H^1}(\Omega_f)^d),
    \\
    \boldsymbol{\eta}, \boldsymbol{\xi} &\in L^\infty(0,T; H^{k+1}(\Omega_p)^d),  
    &&\partial_t \boldsymbol{\xi} \in L^{2}(0,T; {H^{k+1}}(\Omega_p)^d) \cap L^{\infty}(0,T; H^{k+1}(\Omega_p)^d), \\
    \partial_t^3 \boldsymbol{\eta} &\in L^{\infty}(0,T; H^{1}(\Omega_p)^d),
    &&\phi, \partial_t\phi \in {L^2}(0,T; H^{k+1}(\Omega_p)), \\
    \partial_{tt}\phi &\in L^{2}(0,T; L^2(\Omega_p)),
   \ && {p, \partial_t p~} {\in L^2(0,T; H^{k+1}(\Omega_p))}, 
\end{aligned}
\end{equation}
which is standard in Stokes--Biot analysis
\cite{ambartsumyan2018lagrange,guo2022decoupled, temam2001navier,biot1941general,wen2020strongly}.

\subsection{Estimates of the residual terms}\label{sec:residual_estimates}

\begin{theorem}[Estimates of residual terms]
    \label{thm:residual-estimates}
    For $\varepsilon>0$ let $C_{\varepsilon} = C \varepsilon^{-1}$ with a constant $C>0$ independent of $h$ and $\Delta t$. {\sunny{Then, for every $\varepsilon > 0$, there exists $C_{\varepsilon} > 0$ such that the residual terms $F_{\boldsymbol{u}}^{n+1}\left(e_{\boldsymbol{u}}^{h, n+1}\right)$, $F_{\boldsymbol{\xi}}^{n+1}\left(e_{\boldsymbol{\xi}}^{h, n+1}\right)$, $F_{\phi}^{n+1}\left(e_{\phi}^{h, n+1}\right)$ defined in \eqref{F_u}, \eqref{F_xi}, \eqref{F_phi}, are bounded as follows:}}
\begin{equation}
\begin{aligned}
&\left|\sum_{i=1}^n  F_{\boldsymbol{u}}^i\left(e_{\boldsymbol{u}}^{h, i}\right)\right| \leq  C \varepsilon \Delta t \sum_{i=1}^n\left(\left\|e_{\boldsymbol{u}}^{h, i}\right\|_{\Omega_f}^2+\left\|\boldsymbol{D}\left(e_{\boldsymbol{u}}^{h, i}\right)\right\|_{\Omega_f}^2\right) \\
& +C_{\varepsilon}(\Delta t)^2\left(\left\|\partial_{t t} \boldsymbol{u}\right\|_{L^2\left(0, t_n ; L^2\left(\Omega_f\right)\right)}^2
+\left\|\partial_t \boldsymbol{u}\right\|_{L^2\left(0, t_n ; H^1\left(\Omega_f\right)\right)}^2
+\left\|\partial_t \boldsymbol{\xi}\right\|_{L^2\left(0, t_n ; H^1\left(\Omega_p\right)\right)}^2\right.
\\
&\qquad \qquad \quad
\left.+\left\|\partial_t \phi\right\|_{L^2\left(0, t_n ; H^1\left(\Omega_p\right)\right)}^2\right) \\
& +C_{\varepsilon} \Delta t h^{2(k+r)} \sum_{i=1}^n\left(\left\|\partial_t \boldsymbol{u}^i\right\|_{H^{k+1}\left(\Omega_f\right)}^2+\left\|\partial_t p^i\right\|_{H^k\left(\Omega_f\right)}^2\right) \\
& +C_{\varepsilon} \Delta t h^{2 k+r} \sum_{j=0}^{n-1}\left(\left\|\boldsymbol{\xi}^j\right\|_{H^{k+1}\left(\Omega_p\right)}^2+\left\|\phi^j\right\|_{H^{k+1}\left(\Omega_p\right)}^2+\left\|\boldsymbol{u}^j\right\|_{H^{k+1}\left(\Omega_f\right)}^2+\left\|p^j\right\|_{H^k\left(\Omega_f\right)}^2\right) ,
    \label{eq:F_u-estimate-simplified}
\end{aligned}
\end{equation}
\begin{equation}
    \begin{aligned}
&\left|\sum_{i=1}^n F_{\boldsymbol{\xi}}^i\left(e_{\boldsymbol{\xi}}^{h, i}\right)\right| \leq  C \varepsilon \Delta t \sum_{i=1}^{n-1}\left\|\boldsymbol{D}\left(e_{\boldsymbol{\eta}}^{h, i}\right)\right\|_{\Omega_p}^2
+\varepsilon\left(\left\|\boldsymbol{D}\left(e_{\boldsymbol{\eta}}^{h, n}\right)\right\|_{\Omega_p}^2
+\left\|\boldsymbol{D}\left(e_{\boldsymbol{\eta}}^{h, 0}\right)\right\|_{\Omega_p}^2\right) \\
& +C_{\varepsilon}(\Delta t)^2\left[\left\|\partial_t^2 \boldsymbol{u}, \partial_t^3 \boldsymbol{\eta}\right\|_{L^2\left(t_1, t_n ; L^2(\Gamma)\right)}^2
+\left\|\partial_t^3 \boldsymbol{\eta}\right\|_{L^2\left(0, t_n ; L^2\left(\Omega_p\right)\right)}^2 \right.
\\
& \qquad \qquad \quad
\left. +\left\|\partial_t \boldsymbol{u}, \partial_t^2 \boldsymbol{\eta}, \partial_t^3 \boldsymbol{\eta}\right\|_{L^{\infty}\left(t_{n-1}, t_n ; L^2(\Gamma)\right)}^2\right] 
\\
& +C_{\varepsilon} h^{2 k+r}\left[\left\|\partial_t \boldsymbol{u}\right\|_{L^2\left(0, t_{n-1} ; H^{k+1}\left(\Omega_f\right)\right)}^2+\left\|\boldsymbol{u}^{n-1}, \boldsymbol{u}^0\right\|_{H^{k+1}\left(\Omega_f\right)}^2\right. \\
& \qquad \qquad \quad +\left\|\partial_t \boldsymbol{\eta}, \partial_t^2 \boldsymbol{\eta}, \partial_t \phi\right\|_{L^2\left(0, t_n ; H^{k+1}\left(\Omega_p\right)\right)}^2+\left\|\boldsymbol{\xi}^{n-1}, \boldsymbol{\xi}^0, \phi^1, \phi^n\right\|_{H^{k+1}\left(\Omega_p\right)}^2 \\
& \qquad \qquad \quad \left.+\left\|\partial_t \boldsymbol{\eta}\right\|_{L^{\infty}\left(0, t_1 ; H^{k+1}\left(\Omega_p\right)\right)}^2+\left\|\partial_t \boldsymbol{\eta}\right\|_{L^{\infty}\left(t_{n-1}, t_n ; H^{k+1}\left(\Omega_p\right)\right)}^2\right]
\end{aligned}
\label{eq:F_xi-estimate}
\end{equation}
\begin{equation}
\begin{aligned}
 &\left|\sum_{i=1}^n F_\phi^i\left(e_\phi^{h, i}\right) \right| \leq  C \varepsilon \Delta t \sum_{i=1}^n\left(\left\|e_\phi^{h, i}\right\|_{\Omega_p}^2+\left\|\nabla e_\phi^{h, i}\right\|_{\Omega_p}^2\right) \\
& +C_{\varepsilon}(\Delta t)^2 {\left(\left\|\partial_t \phi\right\|_{L^2\left(0, t_n ; H^1\left(\Omega_p\right)\right)}^2+\left\|\partial_{t t} \phi\right\|_{L^2\left(0, t_n ; L^2\left(\Omega_p\right)\right)}^2+\left\|\partial_t \boldsymbol{u}\right\|_{L^2\left(0, t_n ; H^1\left(\Omega_f\right)\right)}^2\right)} \\
& +C_{\varepsilon} \Delta t h^{2 k+r} \sum_{i=1}^n\left(\left\|\boldsymbol{\xi}^i, \phi^{i-1}, \partial_t \phi^i\right\|_{H^{k+1}\left(\Omega_p\right)}^2+\left\|\boldsymbol{u}^{i-1}\right\|_{H^{k+1}\left(\Omega_f\right)}^2+\left\|p^{i-1}\right\|_{H^k\left(\Omega_f\right)}^2\right) .
\end{aligned}
\label{eq:F_phi-estimate}
\end{equation}
\end{theorem}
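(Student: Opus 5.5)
We bound the three functionals term by term and use only the tools listed in Section~\ref{sec:projections}: Cauchy--Schwarz, Young's inequality with a free parameter $\varepsilon$ (which produces the constant $C_\varepsilon=C\varepsilon^{-1}$), the trace inequalities \eqref{eq:trace}--\eqref{eq:vector-trace}, Korn's inequality, and the projection estimates \eqref{eq:proj-error-fluid}--\eqref{eq:improved-L2-approx}.

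\textbf{Step 1: the terms that occur everywhere.} Every residual is built from three kinds of quantity, and each has a standard bound.
\begin{enumerate}[label=(\alph*)]
\item Lagged time increments on $\Gamma$, such as $\boldsymbol{\xi}^{n+1}-\boldsymbol{\xi}^n=\int_{t_n}^{t_{n+1}}\partial_t\boldsymbol{\xi}$. By Cauchy--Schwarz in time and the trace bound \eqref{eq:trace},
\[
\|\boldsymbol{\xi}^{n+1}-\boldsymbol{\xi}^n\|_{L^2(\Gamma)}^2\le C\Delta t\,\|\partial_t\boldsymbol{\xi}\|_{L^2(t_n,t_{n+1};H^1(\Omega_p))}^2 .
\]
\item Interface interpolation errors $e^{I,n}$. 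We use the multiplicative trace inequality $\|v\|_\Gamma\le C\|v\|_{H^1}^{1/2}\|v\|_{L^2}^{1/2}$ together with the $H^1$ estimate ($h^k$) and the $L^2$ estimate ($h^{k+r}$). This gives $\|e^{I,n}\|_{L^2(\Gamma)}^2\le Ch^{2k+r}\|\cdot\|_{H^{k+1}}^2$, which explains the rate $h^{2k+r}$ in the statement.
\item Time-discretization residuals. We split
\[
\frac{\boldsymbol{\Pi}_u\boldsymbol{u}^{n+1}-\boldsymbol{\Pi}_u\boldsymbol{u}^n}{\Delta t}-\partial_t\boldsymbol{u}^{n+1}
=(\boldsymbol{\Pi}_u-I)\frac{\boldsymbol{u}^{n+1}-\boldsymbol{u}^n}{\Delta t}+\Big(\frac{\boldsymbol{u}^{n+1}-\boldsymbol{u}^n}{\Delta t}-\partial_t\boldsymbol{u}^{n+1}\Big).
\]
The first piece is estimated with the $L^2$ projection estimate \eqref{eq:proj-L2-error-fluid} applied to $\partial_t\boldsymbol{u}$, giving $h^{2(k+r)}$. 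Since $\Pi_p$ depends jointly on the velocity--pressure pair, this is where $\partial_tp$ enters. The second piece is a Taylor remainder bounded by $\Delta t\int\|\partial_{tt}\boldsymbol{u}\|^2$. The same splitting applies to the $\phi$ residual.
\end{enumerate}

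\textbf{Step 2: $F_{\boldsymbol{u}}$ and $F_\phi$.} Each pairing has the form $2\Delta t\langle R,e^{h,i}\rangle$. We bound it by
\[
\varepsilon\Delta t\|e^{h,i}\|_{\Gamma}^2+C_\varepsilon\Delta t\|R\|_\Gamma^2 ,
\]
and likewise for volume pairings. The interface norm of $e^{h,i}$ is then controlled through \eqref{eq:vector-trace} (respectively \eqref{eq:scalar-trace}) and Young's inequality by $\|e^{h,i}\|^2+\|\boldsymbol{D}(e^{h,i})\|^2$ (respectively $\|e_\phi^{h,i}\|^2+\|\nabla e_\phi^{h,i}\|^2$). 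These are exactly the $\varepsilon$-terms on the right of \eqref{eq:F_u-estimate-simplified} and \eqref{eq:F_phi-estimate}, and the fluid and Darcy dissipation in $Y_i$ will later absorb them. The $\alpha(\nabla\cdot e_{\boldsymbol{\xi}}^{I},e_\phi^h)$ term and the $\langle e_{\boldsymbol{\xi}}^I\cdot\boldsymbol{n}_p,e_\phi^h\rangle$ term need only $H^1$ or interface bounds for $e_{\boldsymbol{\xi}}^I$. Summing over $i$ and inserting the bounds of Step~1 yields the stated $(\Delta t)^2$ and $h^{2k+r}$ contributions.

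\textbf{Step 3: $F_{\boldsymbol{\xi}}$, which is the main obstacle.} Here there is no $H^1$ dissipation of $e_{\boldsymbol{\xi}}^h$ available: $Y_i$ contains only $L^2$ increments and interface norms of $e_{\boldsymbol{\xi}}^h$ weighted by $\Delta t$. In particular, $\alpha(e_\phi^{I},\nabla\cdot e_{\boldsymbol{\xi}}^{h})$ cannot be absorbed directly. The plan is to substitute $\Delta t\,e_{\boldsymbol{\xi}}^{h,i}=e_{\boldsymbol{\eta}}^{h,i}-e_{\boldsymbol{\eta}}^{h,i-1}$ in every term of $F_{\boldsymbol{\xi}}^i$ and perform discrete summation by parts:
\[
\sum_{i=1}^n a^i(e_{\boldsymbol{\eta}}^{h,i}-e_{\boldsymbol{\eta}}^{h,i-1})=a^n(e_{\boldsymbol{\eta}}^{h,n})-a^1(e_{\boldsymbol{\eta}}^{h,0})-\sum_{i=1}^{n-1}(a^{i+1}-a^i)(e_{\boldsymbol{\eta}}^{h,i}).
\]
\begin{enumerate}[label=(\roman*)]
\item The two boundary terms are handled by Young's inequality, trace bounds and Korn's inequality. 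They give $\varepsilon(\|\boldsymbol{D}(e_{\boldsymbol{\eta}}^{h,n})\|^2+\|\boldsymbol{D}(e_{\boldsymbol{\eta}}^{h,0})\|^2)$ plus data evaluated at $t_1$ and $t_n$. This is the origin of the $L^\infty(0,t_1)$ and $L^\infty(t_{n-1},t_n)$ norms in the statement.
\item In the interior sum, $a^{i+1}-a^i$ is a time difference of the data. Writing it as $\Delta t$ times an average of a time derivative gives $\varepsilon\Delta t\|\boldsymbol{D}(e_{\boldsymbol{\eta}}^{h,i})\|^2+C_\varepsilon\Delta t\|\partial_t a\|^2$-type bounds. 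The differences of interpolation errors are $(\boldsymbol{\Pi}-I)$ applied to $\partial_t\boldsymbol{\eta}$, $\partial_t^2\boldsymbol{\eta}$ and $\partial_t\phi$, which produce the $h^{2k+r}$ terms.
\item The differences of the consistency terms bring in second time differences, such as $\boldsymbol{\xi}^{i+1}-2\boldsymbol{\xi}^i+\boldsymbol{\xi}^{i-1}$ and the differenced $\rho_p$ Taylor residual. These are $O((\Delta t)^2)$ in $\partial_t^2\boldsymbol{u}$ and $\partial_t^3\boldsymbol{\eta}$, which yields the $(\Delta t)^2$ bracket with $L^2(t_1,t_n;L^2(\Gamma))$ and $L^2(0,t_n;L^2(\Omega_p))$ norms.
\end{enumerate}
The delicate points are the bookkeeping of the index shifts and making sure that every term is first rewritten as a functional of $e_{\boldsymbol{\eta}}^h$. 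Once that is done, every contribution is either absorbed through $\boldsymbol{D}(e_{\boldsymbol{\eta}}^h)$ in $X_i$ (via Gronwall) or is pure data.
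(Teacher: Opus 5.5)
Your overall architecture matches the paper's. You use multiplicative trace bounds to get $h^{2k+r}$ for the lagged interface interpolation errors, direct Young estimates for $F_{\boldsymbol u}$, and summation by parts via $\Delta t\,e_{\boldsymbol\xi}^{h,i}=e_{\boldsymbol\eta}^{h,i}-e_{\boldsymbol\eta}^{h,i-1}$ for the divergence and interface pairings in $F_{\boldsymbol\xi}$. Your splitting in Step~1(c) is exact; the identity displayed in the paper for $T_{u,3}^i$ is off by the higher-order term $(\boldsymbol\Pi_u-I)\bigl(\frac{\boldsymbol u^i-\boldsymbol u^{i-1}}{\Delta t}-\partial_t\boldsymbol u^i\bigr)$. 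Two of your steps, however, do not deliver the stated bounds.

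\textbf{The $\rho_p$ term of $F_{\boldsymbol\xi}$.} You sum by parts in every term, including $2\Delta t\rho_p(R^i,e_{\boldsymbol\xi}^{h,i})_p$ with $R^i=\frac{\boldsymbol\Pi_\eta\boldsymbol\xi^i-\boldsymbol\Pi_\eta\boldsymbol\xi^{i-1}}{\Delta t}-\partial_t\boldsymbol\xi^i$. The interior sum then needs $\frac{C_\varepsilon}{\Delta t}\sum_i\|R^{i+1}-R^i\|_{\Omega_p}^2=O((\Delta t)^2)$.

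Take the Taylor part $\tau^i=\frac{\boldsymbol\xi^i-\boldsymbol\xi^{i-1}}{\Delta t}-\partial_t\boldsymbol\xi^i$. Its difference $\tau^{i+1}-\tau^i=\frac{\boldsymbol\xi^{i+1}-2\boldsymbol\xi^i+\boldsymbol\xi^{i-1}}{\Delta t}-(\partial_t\boldsymbol\xi^{i+1}-\partial_t\boldsymbol\xi^i)$ is a cancellation between two $O(\Delta t)$ quantities. Exploiting that cancellation costs $\partial_t^3\boldsymbol\xi=\partial_t^4\boldsymbol\eta$, not $\partial_t^3\boldsymbol\eta$ as you claim. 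With only $\partial_t^3\boldsymbol\eta=\partial_t^2\boldsymbol\xi\in L^2(0,T;L^2(\Omega_p))$ you get $\|\tau^{i+1}-\tau^i\|\le C(\Delta t)^{1/2}\|\partial_t^2\boldsymbol\xi\|_{L^2(t_{i-1},t_{i+1};L^2(\Omega_p))}$, and the resulting bound is $O(1)$. Likewise, the differenced projection part $(\boldsymbol\Pi_\eta-I)(\boldsymbol\xi^{i+1}-2\boldsymbol\xi^i+\boldsymbol\xi^{i-1})/\Delta t$ requires $\partial_t^3\boldsymbol\eta\in L^2(0,T;H^{k+1}(\Omega_p))$; otherwise a factor $(\Delta t)^{-1}$ is lost. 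Neither is in \eqref{eq:regularity}.

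What you missed is that this pairing needs no summation by parts. The lack of $H^1$ control of $e_{\boldsymbol\xi}^h$ matters only for the divergence and interface pairings, while $\|e_{\boldsymbol\xi}^{h,i}\|_{\rho_p,\Omega_p}^2$ is part of $X_i^2$. Bounding directly, as for $T_{u,3}^i$,
\[
|2\Delta t\rho_p(R^i,e_{\boldsymbol\xi}^{h,i})_p|\le\varepsilon\Delta t\|e_{\boldsymbol\xi}^{h,i}\|_{\Omega_p}^2+C_\varepsilon\Delta t\|R^i\|_{\Omega_p}^2,
\qquad
\Delta t\sum_{i=1}^n\|\tau^i\|_{\Omega_p}^2\le(\Delta t)^2\|\partial_t^3\boldsymbol\eta\|_{L^2(0,t_n;L^2(\Omega_p))}^2 .
\]
This is exactly the $\partial_t^3\boldsymbol\eta$ term of \eqref{eq:F_xi-estimate}. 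The projection part gives $h^{2(k+r)}\|\partial_t^2\boldsymbol\eta\|_{L^2(0,t_n;H^{k+1}(\Omega_p))}^2$. The term $\varepsilon\Delta t\sum_i\|e_{\boldsymbol\xi}^{h,i}\|^2$ is handled by Gronwall through $X_i$; this is the paper's \eqref{eq:T_xi_4-estimate}. That contribution does not appear in the display \eqref{eq:F_xi-estimate}, because the paper drops it when combining the $T_{\xi,j}$ bounds, which is presumably why you tried to avoid it. You should simply carry it; it is harmless for Theorem~\ref{thm:apriori}.

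\textbf{The term $\alpha(\nabla\cdot e_{\boldsymbol\xi}^{I,i},e_\phi^{h,i})_p$ of $F_\phi$.} With only the $H^1$ estimate of $e_{\boldsymbol\xi}^{I,i}$, you get $\varepsilon\Delta t\|e_\phi^{h,i}\|^2+C_\varepsilon\Delta t\,h^{2k}\|\boldsymbol\xi^i\|_{H^{k+1}(\Omega_p)}^2$. That is $h^{2k}$ instead of the $h^{2k+r}$ claimed in \eqref{eq:F_phi-estimate}, and it would degrade the final spatial rate to $h^k$. The paper first integrates by parts:
\[
(\nabla\cdot e_{\boldsymbol\xi}^{I,i},e_\phi^{h,i})_p=-(e_{\boldsymbol\xi}^{I,i},\nabla e_\phi^{h,i})_p+\langle e_{\boldsymbol\xi}^{I,i}\cdot\boldsymbol n_p,e_\phi^{h,i}\rangle_\Gamma .
\]
The contribution on $\Sigma_{D,p}$ vanishes because $e_{\boldsymbol\xi}^{I,i}=0$ there. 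The volume term is then paired with the Darcy dissipation $\|\nabla e_\phi^{h,i}\|$ using the $L^2$ estimate $h^{k+r}$. The interface term is treated with the multiplicative trace inequality, which gives $h^{k+r/2}$. Together these yield $h^{2k+r}$.
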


{\sunny{We remark that in the final error estimate,
the terms with the coefficient  $C\Delta t \varepsilon$ will be absorbed into the LHS. Furthermore, we note that the terms with the coefficient $C_{\varepsilon} (\Delta t)^2$ correspond to the time-truncation error, and the terms with the coefficient $C_{\varepsilon}(\Delta t) h^{2 k+r}$ correspond to the spatial truncation error.}}

\begin{proof}

We estimate each of the three terms separately as follows.

\vskip 0.1in
\noindent
{\bf The estimate of $\sum_{i=1}^n F_{\boldsymbol{u}}^{i}\left(e_{\boldsymbol{u}}^{h, i}\right)$}: 

{\sunny{Express $F_{\boldsymbol{u}}^{i}\left(e_{\boldsymbol{u}}^{h, i}\right)$ as the sum of three terms:}}  $F_{\boldsymbol{u}}^{i}\left(e_{\boldsymbol{u}}^{h, i}\right) = T_{u,1}^{i} + T_{u,2}^{i} + T_{u,3}^{i}$ where:
\begin{align*}
    T_{u,1}^{i}&:=2 \Delta t\left\langle\gamma \boldsymbol{P}_f\left(\boldsymbol{\xi}^{i}-\boldsymbol{\xi}^{i-1}-e_{\boldsymbol{\xi}}^{I, i-1}\right), \boldsymbol{P}_f e_{\boldsymbol{u}}^{h, i}\right\rangle_{\Gamma},
    \\
    T_{u,2}^{i}&:=2 \Delta t\left\langle L\left(\boldsymbol{u}^{i}-\boldsymbol{u}^{i-1}-e_{\boldsymbol{u}}^{I, i-1}\right) \cdot \boldsymbol{n}_f-\left(\phi^{i}-\phi^{i-1}-e_\phi^{I, i-1}\right), e_{\boldsymbol{u}}^{h, i} \cdot \boldsymbol{n}_f\right\rangle_{\Gamma},
    \\
    T_{u,3}^{i}&:=\Delta t\rho_f\left(\frac{\Pi_{\boldsymbol{u}} \boldsymbol{u}^{i}-\Pi_{\boldsymbol{u}} \boldsymbol{u}^{i-1}}{\Delta t}-\partial_t \boldsymbol{u}^{i}, e_{\boldsymbol{u}}^{h, i}\right)_f .
\end{align*}
{\sunny{{\bf{Term $T_{u,1}^{i}$:}}}} Applying the Cauchy--Schwarz inequality, the inequality \eqref{eq:vector-trace}, and a generalized Young's inequality to $T_{u,1}^i$, we get:
\begin{align*}
    \left|T_{u,1}^{i}\right| & \leq 2 \Delta t \gamma \left\|\boldsymbol{P}_f\left(\boldsymbol{\xi}^{i}-\boldsymbol{\xi}^{i-1}-e_{\boldsymbol{\xi}}^{I, i-1}\right)\right\|_{\Gamma}\left\|\boldsymbol{P}_f e_{\boldsymbol{u}}^{h, i}\right\|_{\Gamma} 
    \\
    & \leq C \Delta t \gamma \left\|\boldsymbol{P}_f\left(\boldsymbol{\xi}^{i}-\boldsymbol{\xi}^{i-1}-e_{\boldsymbol{\xi}}^{I, i-1}\right)\right\|_{\Gamma}\left\|e_{\boldsymbol{u}}^{h, i}\right\|_{\Omega_f}^{\frac 12} \left\|\boldsymbol{D}(e_{\boldsymbol{u}}^{h, i})\right\|_{\Omega_f}^{\frac 12} 
    \\
    & \leq \Delta t \varepsilon\left\|e_{\boldsymbol{u}}^{h, i}\right\|_{\Omega_f}^2 + \Delta t \varepsilon \left\|\boldsymbol{D}(e_{\boldsymbol{u}}^{h, i})\right\|_{\Omega_f}^2 +  C\varepsilon^{-1} \gamma^2 \Delta t \left\|\boldsymbol{P}_f\left(\boldsymbol{\xi}^{i}-\boldsymbol{\xi}^{i-1}-e_{\boldsymbol{\xi}}^{I, i-1}\right)\right\|_{\Gamma}^2 .
\end{align*}
Using the triangle inequality and \eqref{eq:trace}, we get:
$$
\begin{aligned}\left\|\boldsymbol{P}_f\left(\boldsymbol{\xi}^i-\boldsymbol{\xi}^{i-1}\right. \right.
& \left. \left. -\boldsymbol{e}_{\boldsymbol{\xi}}^{I, i-1}\right)\right\|_{\Gamma} 
\leq\left\|\boldsymbol{\xi}^i-\boldsymbol{\xi}^{i-1}-\boldsymbol{e}_{\boldsymbol{\xi}}^{I, i-1}\right\|_{\Gamma} \\ & \leq\left\|\boldsymbol{\xi}^i-\boldsymbol{\xi}^{i-1}\right\|_{L^2(\Gamma)}+\left\|\boldsymbol{e}_{\boldsymbol{\xi}}^{I, i-1}\right\|_{L^2(\Gamma)} \\ & \leq C\left\|\boldsymbol{\xi}^i-\boldsymbol{\xi}^{i-1}\right\|_{H^1\left(\Omega_p\right)}+C\left\|\boldsymbol{e}_{\boldsymbol{\xi}}^{I, i-1}\right\|_{H^1\left(\Omega_p\right)}^{\frac 12}\left\|\boldsymbol{e}_{\boldsymbol{\xi}}^{I, i-1}\right\|_{L^2\left(\Omega_p\right)}^{\frac 12} \\ & \leq C\left(\left\|\boldsymbol{\xi}^i-\boldsymbol{\xi}^{i-1}\right\|_{H^1\left(\Omega_p\right)}+\left\|\boldsymbol{e}_{\boldsymbol{\xi}}^{I, i-1}\right\|_{H^1\left(\Omega_p\right)}^{\frac 12}\left\|\boldsymbol{e}_{\boldsymbol{\xi}}^{I, i-1}\right\|_{L^2\left(\Omega_p\right)}^{\frac 12}\right) .\end{aligned}
$$
Given that
\begin{subequations}
    \label{eq:diff-intp-bdy-estmate-eqs}
\begin{align}
    \label{eq:diff-intp-bdy-estmate-eq1}
    \left\|\boldsymbol{\xi}^{i}-\boldsymbol{\xi}^{i-1}\right\|_{H^1(\Omega_p)} &\leq \left\|\partial_t \boldsymbol{\xi}\right\|_{L^{1}\left(t_{i-1}, t_{i} ; H^1(\Omega_p)\right)} \leq (\Delta t)^{\frac 12} \left\|\partial_t \boldsymbol{\xi}\right\|_{L^{2}\left(t_{i-1}, t_{i} ; H^1(\Omega_p)\right)} ,
    \\
    \label{eq:diff-intp-bdy-estmate-eq2}
    \left\|e_{\boldsymbol{\xi}}^{I, {i-1}}\right\|_{H^1\left(\Omega_p\right)} &\leq  Ch^{k}\left\|\boldsymbol{\xi}^{i-1}\right\|_{ H^{k+1}\left(\Omega_p\right)},    
    \\
    \label{eq:diff-intp-bdy-estmate-eq3}
    \left\|e_{\boldsymbol{\xi}}^{I, {i-1}}\right\|_{L^2\left(\Omega_p\right)} &\leq  Ch^{k+r}\left\| \boldsymbol{\xi}^{i-1}\right\|_{H^{k+1}\left(\Omega_p\right)},    
\end{align}
\end{subequations}
%
we obtain:
\begin{multline}
    \label{eq:diff-intp-bdy-estimate}
    \left\|\boldsymbol{P}_f\left(\boldsymbol{\xi}^{i}-\boldsymbol{\xi}^{i-1}-e_{\boldsymbol{\xi}}^{I, {i-1}}\right)\right\|_{\Gamma} 
    \\
    \leq  C\left((\Delta t)^{\frac 12}\left\|\partial_t \boldsymbol{\xi}\right\|_{L^{2}\left(t_{i-1}, t_{i} ; H^1(\Omega_p)\right)}+ h^{k+\frac r2}\left\| \boldsymbol{\xi}^{i-1}\right\|_{ H^{k+1}\left(\Omega_p\right)}\right) . 
\end{multline}
Therefore,
\begin{align}
    \notag
    \left|T_{u,1}^{i}\right| &\leq  \Delta t \varepsilon\left\|e_{\boldsymbol{u}}^{h, i}\right\|_{\Omega_f}^2 + \Delta t \varepsilon \left\|\boldsymbol{D}(e_{\boldsymbol{u}}^{h, i})\right\|_{\Omega_f}^2
    \\
    \label{eq:Tu1-estimate}
    &\quad +C \varepsilon^{-1} \gamma^2 \Delta t \left( \Delta t \left\| \partial_t \boldsymbol{\xi}\right\|_{L^{2}\left(t_{i-1}, t_{i} ; H^1(\Omega_p)\right)}^2+ h^{2k+r} \left\|\boldsymbol{\xi}^{i-1} \right\|_{H^{k+1}\left(\Omega_p\right)}^2\right) .
\end{align}

\noindent
{\sunny{ {\bf{Term $T_{u,2}^{i}$:}}}} {\sunny{We split this term}} it into two terms
\begin{align*}
    T_{u,2a}^{i}&:=2 \Delta t\left\langle L\left({\boldsymbol{u}}^{i}-{\boldsymbol{u}}^{i-1}-e_{\boldsymbol{u}}^{I, {i-1}}\right) \cdot {\boldsymbol{n}}_f, e_{\boldsymbol{u}}^{h, i} \cdot {\boldsymbol{n}}_f\right\rangle_{\Gamma},
    \\
    T_{u,2b}^{i}&:=-2 \Delta t\left\langle\phi^{i}-\phi^{i-1}-e_\phi^{I, {i-1}}, e_{\boldsymbol{u}}^{h, i} \cdot {\boldsymbol{n}}_f\right\rangle_{\Gamma} .
\end{align*}
Again, by  the Cauchy--Schwarz inequality, \eqref{eq:vector-trace}, and a generalized Young's inequality we {\sunny{obtain}}:
\begin{align*}
    \left|T_{u,2a}^{i}\right| 
    & {\small{\leq \Delta t \varepsilon\left\|e_{\boldsymbol{u}}^{h, i}\right\|_{\Omega_f}^2 + \Delta t \varepsilon \left\|\boldsymbol{D}(e_{\boldsymbol{u}}^{h, i})\right\|_{\Omega_f}^2 + C \varepsilon^{-1}\Delta t L^2 \left\|\left(\boldsymbol{u}^{i}-\boldsymbol{u}^{i-1}-e_{\boldsymbol{u}}^{I, {i-1}}\right) \cdot \boldsymbol{n}_f\right\|_{\Gamma}^2.}}
\end{align*}
Regarding the third term on the RHS, we can again use the argument with \eqref{eq:diff-intp-bdy-estmate-eqs} to get \eqref{eq:diff-intp-bdy-estimate}. Hence, by \eqref{eq:proj-error-fluid} and \eqref{eq:proj-L2-error-fluid}, 
\begin{align}
    \notag
    \left|T_{u,2a}^{i}\right| &\leq \Delta t \varepsilon\left\|e_{\boldsymbol{u}}^{h, i}\right\|_{\Omega_f}^2 + \Delta t \varepsilon \left\|\boldsymbol{D}(e_{\boldsymbol{u}}^{h, i})\right\|_{\Omega_f}^2 + C \varepsilon^{-1} (\Delta t)^2  \left\|\partial_t \boldsymbol{u}\right\|^2_{L^{2}\left(t_{i-1}, t_{i} ; H^1(\Omega_f)\right)} 
    \\    
    \label{eq:Tu2a-estimate}
    &\quad + C \varepsilon^{-1} \Delta t h^{2k+r} \left( \left\|\boldsymbol{u}^{i-1}\right\|^2_{H^{k+1}\left(\Omega_f\right)} + \left\|p^{i-1} \right\|^2_{H^{k}\left(\Omega_f\right)} \right).
\end{align}
{\sunny{Using}} a similar argument for the term $\left|T_{u,2b}\right|$, we have:
\begin{align}
    \notag
    \left|T_{u,2b}^{i}\right| & \leq 2 \Delta t\left\|\phi^{i}-\phi^{i-1}-e_\phi^{I, {i-1}}\right\|_{\Gamma}\left\|e_{\boldsymbol{u}}^{h, i} \cdot \boldsymbol{n}_f\right\|_{\Gamma} 
    \\
    \notag
    & \leq \Delta t \varepsilon\left\|e_{\boldsymbol{u}}^{h, i}\right\|_{\Omega_f}^2 + \Delta t \varepsilon \left\|\boldsymbol{D}(e_{\boldsymbol{u}}^{h, i})\right\|_{\Omega_f}^2 + C \Delta t \varepsilon^{-1}\left\|\phi^{i}-\phi^{i-1}-e_\phi^{I, {i-1}}\right\|_{\Gamma}^2 
    \\
    \notag
    &\leq \Delta t \varepsilon\left\|e_{\boldsymbol{u}}^{h, i}\right\|_{\Omega_f}^2 + \Delta t \varepsilon \left\|\boldsymbol{D}(e_{\boldsymbol{u}}^{h, i})\right\|_{\Omega_f}^2
    \\
    \label{eq:Tu2b-estimate}
    &\quad + C \Delta t \varepsilon^{-1}\left(\Delta t \left\|\partial_t \phi\right\|^2_{L^{2}\left(t_{i-1}, t_{i} ; H^{1}(\Omega_p)\right)} +h^{2k+r}\left\|\phi^{i-1}\right\|^2_{H^{k+1}\left(\Omega_p\right)}\right) .
\end{align}
%
%

\noindent
{\sunny{{\bf{Term $T_{u,3}^{i}$:}}  We rewrite this term using  the following:}}
$$
\frac{\Pi_u \boldsymbol{u}^{i}-\Pi_u \boldsymbol{u}^{i-1}}{\Delta t}-\partial_t \boldsymbol{u}^{i}=\left(\Pi_u\left(\partial_t \boldsymbol{u}^{i}\right)-\partial_t \boldsymbol{u}^{i}\right)+\left(\frac{\boldsymbol{u}^{i}-\boldsymbol{u}^{i-1}}{\Delta t}-\partial_t \boldsymbol{u}^{i}\right) .
$$
By \eqref{eq:proj-L2-error-fluid}, the projection error term can be bounded:
\begin{align*}
    \left\|\Pi_u \partial_t \boldsymbol{u}^{i}-\partial_t \boldsymbol{u}^{i}\right\|_{\Omega_f} \leq  C h^{k+r} ( \|\partial_t \boldsymbol{u}^{i}\|_{H^{k+1}\left(\Omega_f\right)} + \|\partial_t p^{i}\|_{H^{k} \left(\Omega_f\right)}) .
\end{align*}
By Taylor expansion in time and the regularity
\eqref{eq:regularity}, the truncation error satisfies:
\begin{align*}
    \left\|\frac{\boldsymbol{u}^{i}-\boldsymbol{u}^{i-1}}{\Delta t}-\partial_t \boldsymbol{u}^{i}\right\|_{\Omega_f} &\leq C \left\|\partial_{t t} \boldsymbol{u}\right\|_{L^{1}\left(t_{i-1}, t_{i} ; L^2\left(\Omega_f\right)\right)} 
    \\
    &\leq C (\Delta t)^{\frac 12} \left\|\partial_{t t} \boldsymbol{u}\right\|_{L^{2}\left(t_{i-1}, t_{i} ; L^2\left(\Omega_f\right)\right)} .    
\end{align*}
Hence,
\begin{align*}
    \left\|
    \frac{\Pi_{\boldsymbol{u}} \boldsymbol{u}^{i} - \Pi_{\boldsymbol{u}} \boldsymbol{u}^{i-1}}{\Delta t}
    - \partial_t \boldsymbol{u}^{i}
    \right\|_{\Omega_f}
    &\le
    C h^{k+r} (\|\partial_t \boldsymbol{u}^{i}\|_{H^{k+1}(\Omega_f)} + \|\partial_t p^{i}\|_{H^{k}(\Omega_f)})
    \\
    &\quad  + C(\Delta t)^{\frac 12} \|\partial_{tt} \boldsymbol{u}\|_{L^{2}(t_{i-1}, t_{i}; L^2(\Omega_f))}
    . 
\end{align*}
By applying the Cauchy--Schwarz and Young's inequalities, we obtain:
%
\begin{align}
    \label{eq:T_u_3-estimate}
    |T_{u,3}^i|&\le 2 \Delta t \rho_f \left\| \frac{\Pi_{\boldsymbol{u}} \boldsymbol{u}^{i} - \Pi_{\boldsymbol{u}} \boldsymbol{u}^{i-1}}{\Delta t} - \partial_t \boldsymbol{u}^{i} \right\|_{\Omega_f} \left\| {e}_{\boldsymbol{u}}^{h,i} \right\|_{\Omega_f}
    \le \varepsilon \Delta t \|{e}_{\boldsymbol{u}}^{h,i}\|_{\Omega_f}^2 
    \\
    &+ {\small{C \varepsilon^{-1} \Delta t \left(h^{2(k+r)}(\|\partial_t \boldsymbol{u}^{i}\|_{H^{k+1}(\Omega_f)}^2 + \|\partial_t p^{i}\|_{H^{k}(\Omega_f)}^2 ) + \Delta t \|\partial_{tt} \boldsymbol{u}\|_{L^{2}(t_{i-1}, t_{i}; L^2(\Omega_f))}^2 \right).}}
\end{align}
%
{\sunny{We now combine \eqref{eq:Tu1-estimate}, \eqref{eq:Tu2a-estimate}, \eqref{eq:Tu2b-estimate}, and \eqref{eq:T_u_3-estimate}, to obtain}}
\begin{align*}
    &|F_{\boldsymbol{u}}^{i}\left(e_{\boldsymbol{u}}^{h, i}\right)| 
    \le 4 \Delta t \varepsilon\left\|e_{\boldsymbol{u}}^{h, i}\right\|_{\Omega_f}^2 + 3\Delta t \varepsilon \left\|\boldsymbol{D}(e_{\boldsymbol{u}}^{h, i})\right\|_{\Omega_f}^2 
    \\
    &+ C_{\varepsilon} \Delta t \left(h^{2(k+r)}(\|\partial_t \boldsymbol{u}^{i}\|_{H^{k+1}(\Omega_f)}^2 + \|\partial_t p^{i}\|_{H^k(\Omega_f)}^2 ) + \Delta t \|\partial_{tt} \boldsymbol{u}\|_{L^{2}(t_{i-1}, t_{i}; L^2(\Omega_f))}^2 \right)
    \\
    &+ C_{\varepsilon} (\Delta t)^2 \left[  \left\|\partial_t \boldsymbol{u}\right\|_{L^{2}\left(t_{i-1}, t_{i} ; H^{1}(\Omega_f)\right)}^2  + \left\|\partial_t \boldsymbol{\xi}, \partial_t \phi\right\|_{L^{2}\left(t_{i-1}, t_{i} ; H^{1}(\Omega_p)\right)}^2 \right]
    \\
    &+ {\small{C_{\varepsilon} \Delta t h^{2k+r}\left[\left\|\boldsymbol{\xi}^{i-1}, \phi^{i-1}\right\|_{H^{k+1}\left(\Omega_p\right)}^2 + \left\| \boldsymbol{u}^{i-1} \right\|_{H^{k+1}\left(\Omega_f\right)}^2 + \left\|p^{i-1}\right\|_{H^{k}\left(\Omega_f\right)}^2 \right].}}
\end{align*}
Then, {\sunny{the estimate \eqref{eq:F_u-estimate-simplified} in the statement of the theorem}} follows by summing the above estimate for $1\le i \le n$.

\vskip 0.1in
\noindent
{\bf {The estimate of $\sum_{i=1}^n F_{\boldsymbol{\xi}}^{i}\left(e_{\boldsymbol{\xi}}^{h, i}\right)$}: }

{\sunny{Express $F_{\boldsymbol{\xi}}^{i}\left(e_{\boldsymbol{\xi}}^{h, i}\right)$ as the sum of the following four terms:}}
\begin{align*}
    T_{\xi,1}^{i} &:= 2 \Delta t\left(-\alpha\left(e_\phi^{I, i}, \nabla \cdot e_{\boldsymbol{\xi}}^{h, i}\right)_p\right)
    \\
    T_{\xi,2}^{i}&:= 2 \Delta t\left(\left\langle e_{\boldsymbol{\xi}}^{I, i} \cdot \boldsymbol{n}_p, e_{\boldsymbol{\xi}}^{h, i} \cdot \boldsymbol{n}_p\right\rangle_{\Gamma}+\left\langle e_\phi^{I, i}, e_{\boldsymbol{\xi}}^{h, i} \cdot \boldsymbol{n}_p\right\rangle_{\Gamma} +  \gamma\left\langle \boldsymbol{P}_p\left(e_{\boldsymbol{\xi}}^{I, i}\right), \boldsymbol{P}_p\left(e_{\boldsymbol{\xi}}^{h, i}\right)\right\rangle_{\Gamma} \right),
    \\
    T_{\xi,3}^{i}&:=2 \Delta t\left\langle\gamma \boldsymbol{P}_p\left(\boldsymbol{u}^{i}-\boldsymbol{u}^{i-1}-e_{\boldsymbol{u}}^{I, {i-1}}\right), \boldsymbol{P}_p\left(e_{\boldsymbol{\xi}}^{h, i}\right)\right\rangle_{\Gamma} 
    \\
    & \quad +2 \Delta t\left\langle\left(\boldsymbol{\xi}^{i}-\boldsymbol{\xi}^{i-1}-e_{\boldsymbol{\xi}}^{I,{i-1}}\right) \cdot \boldsymbol{n}_p, e_{\boldsymbol{\xi}}^{h, i} \cdot \boldsymbol{n}_p\right\rangle_{\Gamma} ,
    \\
    T_{\xi,4}^{i}&:= 2 \Delta t \rho_p\left(\frac{\Pi_{\boldsymbol{\eta}} \boldsymbol{\xi}^{i}-\Pi_{\boldsymbol{\eta}} \boldsymbol{\xi}^{i-1}}{\Delta t}-\partial_t \boldsymbol{\xi}^{i}, e_{\boldsymbol{\xi}}^{h, i}\right)_p 
\end{align*}    
{\sunny{and estimate the sums from $ i = 1$ to $n$ of each of the four terms as follows.}}
\vskip 0.1in

\noindent
{\sunny{{\bf{Term $\sum_{i=1}^n T_{\xi,1}^{i}$:}}
By recalling $e_{\boldsymbol{\xi}}^{h,i} = (e_{\boldsymbol{\eta}}^{h,i} - e_{\boldsymbol{\eta}}^{h,i-1})/\Delta t$,      
we use the identity}}
\begin{align*}
    \notag
    \sum_{i=1}^n T_{\xi,1}^{i}&= -2 \Delta t  \alpha \sum_{i=1}^n \left(e_\phi^{I, i}, \nabla \cdot e_{\boldsymbol{\xi}}^{h, i}\right)_{p}
    \\
    \notag
    &=  -2  \alpha \sum_{i=1}^n \left(e_\phi^{I, i}, \nabla \cdot (e_{\boldsymbol{\eta}}^{h, i} - e_{\boldsymbol{\eta}}^{h, i-1})\right)_{p} \qquad (\text{because }\Delta t e_{\boldsymbol{\xi}}^{h, i}=(e_{\boldsymbol{\eta}}^{h, i} - e_{\boldsymbol{\eta}}^{h, i-1}))
    \\
    &= 2 \alpha \left( \sum_{i=2}^n \left(e_\phi^{I, i} - e_\phi^{I,i-1}, \nabla \cdot  e_{\boldsymbol{\eta}}^{h, i-1}\right)_{p} - \left(e_\phi^{I, n}, \nabla \cdot  e_{\boldsymbol{\eta}}^{h, n}\right)_{p} + \left(e_\phi^{I, 1}, \nabla \cdot  e_{\boldsymbol{\eta}}^{h, 0}\right)_{p} \right)
\end{align*}
{\sunny{to obtain}}
\begin{align}
    \notag
    & \left|\sum_{i=1}^n T_{\xi,1}^{i}\right|= \Delta t \varepsilon \sum_{i=1}^{n-1} \left\|\nabla \cdot e_{\boldsymbol{\eta}}^{h, i}\right\|_{\Omega_p}^2 + \varepsilon (\left\|\nabla \cdot e_{\boldsymbol{\eta}}^{h, n}\right\|_{\Omega_p}^2 + \left\|\nabla \cdot e_{\boldsymbol{\eta}}^{h, 0}\right\|_{\Omega_p}^2 )
    \\
    \label{eq:T_xi_1-estimate}
    & + C_{\varepsilon} \left\|\partial_t e_\phi^{I}\right\|_{L^2(t_{1}, t_{n}; L^2(\Omega_p))}^2 + C_{\varepsilon} \left\|e_\phi^{I, 1}\right\|_{\Omega_p}^2 + C_{\varepsilon} \left\|e_\phi^{I, n}\right\|_{\Omega_p}^2 
    \\
    \notag 
    &\leq \Delta t \varepsilon \sum_{i=1}^{n-1} \left\|\nabla \cdot e_{\boldsymbol{\eta}}^{h, i}\right\|_{\Omega_p}^2 + \varepsilon ( \left\|\nabla \cdot e_{\boldsymbol{\eta}}^{h, n}\right\|_{\Omega_p}^2 + \left\|\nabla \cdot e_{\boldsymbol{\eta}}^{h, 0}\right\|_{\Omega_p}^2 )
    \\
    \notag
    &+ C_{\varepsilon} h^{2k+2r} \left( \left\|\partial_t \phi \right\|_{L^2(t_{1}, t_{n}; H^{k+1}(\Omega_p))}^2 + C_{\varepsilon} \left\|\phi^{1}\right\|_{H^{k+1}(\Omega_p)}^2 + C_{\varepsilon} \left\|\phi^{n}\right\|_{H^{k+1}(\Omega_p)}^2 \right).
\end{align}

\vskip 0.1in

\noindent
{\sunny{{\bf{Term $\sum_{i=1}^n T_{\xi,2}^{i}$:}}
Again, by using $e_{\boldsymbol{\xi}}^{h,i} = (e_{\boldsymbol{\eta}}^{h,i} - e_{\boldsymbol{\eta}}^{h,i-1})/\Delta t$, we get:}}
\begin{align*} 
    &{\small{\sum_{i=1}^n T_{\xi,2}^i =\sum_{i=1}^n 2 \Delta t\left(\left\langle e_{\boldsymbol{\xi}}^{I, i} \cdot \boldsymbol{n}_p, e_{\boldsymbol{\xi}}^{h, i} \cdot \boldsymbol{n}_p\right\rangle_{\Gamma} + \left\langle e_\phi^{I, i}, e_{\boldsymbol{\xi}}^{h, i} \cdot \boldsymbol{n}_p\right\rangle_{\Gamma} + \gamma \left\langle \boldsymbol{P}_p \left(e_{\boldsymbol{\xi}}^{I, i} \right), \boldsymbol{P}_p \left( e_{\boldsymbol{\xi}}^{h, i} \right) \right\rangle_{\Gamma} \right)}}  
    \\
    &= -\sum_{i=2}^n 2 \left(\left\langle (e_{\boldsymbol{\xi}}^{I, i} - e_{\boldsymbol{\xi}}^{I, i-1}) \cdot \boldsymbol{n}_p + e_{\phi}^{I,i} - e_{\phi}^{I,i-1}, e_{\boldsymbol{\eta}}^{h, i-1} \cdot \boldsymbol{n}_p\right\rangle_{\Gamma} \right.
    \\
    & \qquad \qquad \qquad + \gamma \left. \left\langle \boldsymbol{P}_p \left(e_{\boldsymbol{\xi}}^{I, i} - e_{\boldsymbol{\xi}}^{I, i-1} \right) , \boldsymbol{P}_p\left( e_{\boldsymbol{\eta}}^{h, i-1} \right) \right\rangle_{\Gamma} \right)
    \\
    &\quad + 2 \left(- \left\langle e_{\boldsymbol{\xi}}^{I, 1} \cdot \boldsymbol{n}_p + e_\phi^{I, 1}, e_{\boldsymbol{\eta}}^{h, 0} \cdot \boldsymbol{n}_p\right\rangle_{\Gamma} + \left\langle e_{\boldsymbol{\xi}}^{I, n} \cdot \boldsymbol{n}_p + e_\phi^{I, n}, e_{\boldsymbol{\eta}}^{h, n} \cdot \boldsymbol{n}_p\right\rangle_{\Gamma} \right)
    \\
    &\quad + 2\gamma  \left(- \left\langle \boldsymbol{P}_p \left(e_{\boldsymbol{\xi}}^{I, 1}  \right) , \boldsymbol{P}_p\left( e_{\boldsymbol{\eta}}^{h, 0} \right) \right\rangle_{\Gamma} + \left\langle \boldsymbol{P}_p \left(e_{\boldsymbol{\xi}}^{I, n}  \right) , \boldsymbol{P}_p\left( e_{\boldsymbol{\eta}}^{h, n} \right) \right\rangle_{\Gamma}  \right) .
\end{align*}
{\sunny{Now, since $\| (e_{\boldsymbol{\xi}}^{I, i} - e_{\boldsymbol{\xi}}^{I, i-1}) \cdot \boldsymbol{n}_p \|_{\Gamma} \le \| \partial_t^2 e_{\boldsymbol{\eta}}^{I} \|_{L^1(t_{i-2}, t_i; L^2(\Gamma))}$, by using \eqref{eq:vector-trace} and Young's inequality, we get:}}
\begin{align}\label{eq:T_xi_2-estimate}
    \left| \sum_{i=1}^n T_{\xi,2}^i \right| &\leq C \sum_{i=2}^n (\| \partial_t^2 e_{\boldsymbol{\eta}}^{I} \|_{L^1(t_{i-2}, t_i; L^2(\Gamma))} + \|\partial_t e_{\phi}^{I} \|_{L^1(t_{i-1}, t_i; L^2(\Gamma))}) \left\|e_{\boldsymbol{\eta}}^{h, i-1} \right\|_{\Gamma} 
    \\
    \notag
    &\quad + C (\| e_{\boldsymbol{\xi}}^{I, 1} \|_{\Gamma} + \| e_{\phi}^{I,1} \|_{\Gamma}) \|e_{\boldsymbol{\eta}}^{h,0} \|_{\Gamma} + C ( \| e_{\boldsymbol{\xi}}^{I, n} \|_{\Gamma} + \|e_{\phi}^{I,n} \|_{\Gamma} )\|e_{\boldsymbol{\eta}}^{h,n} \|_{\Gamma}
    \\
    \notag
    &\leq C (\Delta t)^{\frac 12} \sum_{i=2}^n (\| \partial_t^2 e_{\boldsymbol{\eta}}^{I} \|_{L^2(t_{i-2}, t_i; L^2(\Gamma))} + \|\partial_t e_{\phi}^{I} \|_{L^2(t_{i-1}, t_i; L^2(\Gamma))}) \left\|e_{\boldsymbol{\eta}}^{h, i} \right\|_{\Gamma} 
    \\
    \notag
    &\quad +  C (\| e_{\boldsymbol{\xi}}^{I, 1} \|_{\Gamma} + \| e_{\phi}^{I,1} \|_{\Gamma}) \|e_{\boldsymbol{\eta}}^{h,0} \|_{\Gamma} + C ( \| e_{\boldsymbol{\xi}}^{I, n} \|_{\Gamma} + \|e_{\phi}^{I,n} \|_{\Gamma} )\|e_{\boldsymbol{\eta}}^{h,n} \|_{\Gamma}
    \\
    \notag
    & \leq \Delta t \varepsilon \sum_{i=1}^{n-1}   
    \left\|\boldsymbol{D}(e_{\boldsymbol{\eta}}^{h, i} )\right\|_{\Omega_p}^2  + \varepsilon (\left\|\boldsymbol{D}(e_{\boldsymbol{\eta}}^{h, n} )\right\|_{\Omega_p}^2 + \left\|\boldsymbol{D}(e_{\boldsymbol{\eta}}^{h, 0} )\right\|_{\Omega_p}^2)  
    \eqref{eq:vector-trace} 
    \\
    \notag
    &\quad +  C_{\varepsilon} \left(\left\|\partial_t^2 e_{\boldsymbol{\eta}}^{I} \right\|_{L^2(0,t_n; L^2(\Gamma))}^2+\left\|\partial_t e_\phi^{I}\right\|_{L^2(t_1,t_n; L^2(\Gamma))}^2 \right)
    \\
    \notag
    &\quad + C_{\varepsilon} \left( \| \partial_t e_{\boldsymbol{\eta}}^{I} \|_{L^{\infty}(0, t_1; L^2(\Gamma))} + \| e_{\phi}^{I,1} \|_{\Gamma} +  \| \partial_t e_{\boldsymbol{\eta}}^{I} \|_{L^{\infty}(t_{n-1}, t_n; L^2(\Gamma))} + \| e_{\phi}^{I,n} \|_{\Gamma}\right)
    \\
    \notag
    &\leq \Delta t \varepsilon \sum_{i=1}^{n-1}   
    \left\|\boldsymbol{D}(e_{\boldsymbol{\eta}}^{h, i} )\right\|_{\Omega_p}^2 + \varepsilon (\left\|\boldsymbol{D}(e_{\boldsymbol{\eta}}^{h, n} )\right\|_{\Omega_p}^2 + \left\|\boldsymbol{D}(e_{\boldsymbol{\eta}}^{h, 0} )\right\|_{\Omega_p}^2) 
    \notag
    \\
    &\quad + C_{\varepsilon} h^{2k+r} \left(\left\|\partial_t^2 \boldsymbol{\eta} \right\|_{L^2(0,t_n; H^{k+1}(\Omega_p))}^2+\left\|\partial_t \phi\right\|_{L^2(t_1,t_n; H^{k+1}(\Omega_p))}^2 \right) 
    \notag
    \\
    \notag
    &\quad + C_{\varepsilon} h^{2k+r} \left( \| \partial_t \boldsymbol{\eta} \|_{L^{\infty}(0, t_1; H^{k+1}(\Omega_p))} +  \| \partial_t \boldsymbol{\eta} \|_{L^{\infty}(t_{n-1}, t_n; H^{k+1}(\Omega_p))} 
    \right.
    \notag
    \\
    & \quad \left.+ \| \phi^1, \phi^{n} \|_{H^{k+1}(\Omega_p)}\right).
    \notag
\end{align}

\vskip 0.1in

\noindent
{\sunny{{\bf{Term $\sum_{i=1}^n T_{\xi,3}^{i}$:}}
To estimate the first term in the definition $T_{\xi,3}^{i}$  we observe that:}}
$$
\begin{aligned}
    &2 \Delta t \gamma \sum_{i=1}^n \left\langle \boldsymbol{P}_p\left(\boldsymbol{u}^{i}-\boldsymbol{u}^{i-1}-e_{\boldsymbol{u}}^{I, {i-1}}\right), \boldsymbol{P}_p e_{\boldsymbol{\xi}}^{h, i}\right\rangle_{\Gamma}
    \\
    &= -2 \gamma \sum_{i=2}^n \left\langle \boldsymbol{P}_p\left(\boldsymbol{u}^{i}-2\boldsymbol{u}^{i-1}+\boldsymbol{u}^{i-2} - (e_{\boldsymbol{u}}^{I, {i-1}} - e_{\boldsymbol{u}}^{I, {i-2}})\right), \boldsymbol{P}_p e_{\boldsymbol{\eta}}^{h, i-1}\right\rangle_{\Gamma} 
    \\
    &\quad + 2\gamma \left\langle \boldsymbol{P}_p\left(\boldsymbol{u}^{n}-\boldsymbol{u}^{n-1}-e_{\boldsymbol{u}}^{I, {n-1}}\right), \boldsymbol{P}_p e_{\boldsymbol{\eta}}^{h, n}\right\rangle_{\Gamma} - 2\gamma \left\langle \boldsymbol{P}_p\left(\boldsymbol{u}^{1}-\boldsymbol{u}^{0}-e_{\boldsymbol{u}}^{I, {0}}\right), \boldsymbol{P}_p e_{\boldsymbol{\eta}}^{h, 0}\right\rangle_{\Gamma} ,
\end{aligned}
$$
{\sunny{and obtain the following estimate:}}
$$
\begin{aligned}
    &{\small{2 \Delta t \gamma \left| \sum_{i=1}^n \left\langle \boldsymbol{P}_p\left(\boldsymbol{u}^{i}-\boldsymbol{u}^{i-1}-e_{\boldsymbol{u}}^{I, {i-1}}\right), \boldsymbol{P}_p e_{\boldsymbol{\xi}}^{h, i}\right\rangle_{\Gamma} \right|}}
    \\
    &\leq 2 \gamma \sum_{i=2}^n \| \boldsymbol{u}^{i}-2\boldsymbol{u}^{i-1}+\boldsymbol{u}^{i-2} - (e_{\boldsymbol{u}}^{I, {i-1}} - e_{\boldsymbol{u}}^{I, {i-2}})\|_{\Gamma} \|  e_{\boldsymbol{\eta}}^{h, i-1}\|_{\Gamma} 
\\
    &\quad + 2\gamma \| \boldsymbol{u}^{n}-\boldsymbol{u}^{n-1}-e_{\boldsymbol{u}}^{I, {n-1}} \|_{\Gamma} \| e_{\boldsymbol{\eta}}^{h, n}\|_{\Gamma} + 2\gamma \| \boldsymbol{u}^{1}-\boldsymbol{u}^{0}-e_{\boldsymbol{u}}^{I, {0}} \|_{\Gamma} \| e_{\boldsymbol{\eta}}^{h, 0}\|_{\Gamma}  
\\
    &\leq 2 \gamma \sum_{i=2}^n ( \Delta t \| \partial_t^2 \boldsymbol{u} \|_{L^1(t_{i-2}, t_i; L^2(\Gamma))} + \|\partial_t e_{\boldsymbol{u}}^{I}\|_{L^1(t_{i-2},t_{i-1}; L^2(\Gamma))}) \|  e_{\boldsymbol{\eta}}^{h, i-1}\|_{\Gamma} 
    \\
    &\quad + 2\gamma \| \boldsymbol{u}^{n}-\boldsymbol{u}^{n-1}-e_{\boldsymbol{u}}^{I, {n-1}} \|_{\Gamma} \| e_{\boldsymbol{\eta}}^{h, n}\|_{\Gamma} + 2\gamma \| \boldsymbol{u}^{1}-\boldsymbol{u}^{0}-e_{\boldsymbol{u}}^{I, {0}} \|_{\Gamma} \| e_{\boldsymbol{\eta}}^{h, 0}\|_{\Gamma}
    \\
    &\le 2 \gamma \sum_{i=2}^n ( (\Delta t)^{\frac 32} \| \partial_t^2 \boldsymbol{u} \|_{L^2(t_{i-2}, t_i; L^2(\Gamma))} + (\Delta t)^{\frac 12} \|\partial_t e_{\boldsymbol{u}}^{I}\|_{L^2(t_{i-2},t_{i-1}; L^2(\Gamma))}) \|  e_{\boldsymbol{\eta}}^{h, i-1}\|_{\Gamma} 
    \\
    &\quad + 2 \gamma (\Delta t \| \partial_t \boldsymbol{u} \|_{L^{\infty}(t_{n-1},t_n; L^2(\Gamma))} + \|e_{\boldsymbol{u}}^{I, {n-1}} \|_{\Gamma}) \| e_{\boldsymbol{\eta}}^{h, n}\|_{\Gamma} 
    \\
    &\quad + 2\gamma (\Delta t \| \partial_t \boldsymbol{u} \|_{L^{\infty}(t_{0},t_1; L^2(\Gamma))} + \|e_{\boldsymbol{u}}^{I, {0}} \|_{\Gamma}) \| e_{\boldsymbol{\eta}}^{h, 0}\|_{\Gamma} 
    \\
    &\le \varepsilon \Delta t \sum_{i=2}^n \| \boldsymbol{D}(e_{\boldsymbol{\eta}}^{h,i-1})\|_{\Omega_p}^2 + C_{\varepsilon} ((\Delta t)^2 \| \partial_t^2 \boldsymbol{u} \|_{L^2(t_{1}, t_n; L^2(\Gamma))}^2 
    \\
    & \quad + h^{2k+r} \|\partial_t \boldsymbol{u}\|_{L^2(0,t_{n-1}; H^{k+1}(\Omega_f))}^2) 
    + \varepsilon (\| \boldsymbol{D}(e_{\boldsymbol{\eta}}^{h, n})\|_{\Omega_p}^2 + \| \boldsymbol{D}(e_{\boldsymbol{\eta}}^{h, 0})\|_{\Omega_p}^2  ) 
    \\
    &\quad + C_{\varepsilon} ((\Delta t)^2 \| \partial_t \boldsymbol{u} \|_{L^{\infty}(t_{n-1},t_n; L^2(\Gamma))}^2 + h^{2k+r} \|\boldsymbol{u}^{n-1} , \boldsymbol{u}^0 \|_{H^{k+1}(\Omega_p)} ) .
\end{aligned}
$$
{\sunny{Similarly, to estimate the second term in the definition $T_{\xi,3}^{i}$  we observe that:}}
$$
\begin{aligned}
    & 2 \Delta t \sum_{i=1}^n \left\langle \left(\boldsymbol{\xi}^{i}-\boldsymbol{\xi}^{i-1}-e_{\boldsymbol{\xi}}^{I, {i-1}}\right) \cdot \boldsymbol{n}_p, e_{\boldsymbol{\xi}}^{h, i} \cdot \boldsymbol{n}_p \right\rangle_{\Gamma}
    \\
    &= -2 \sum_{i=2}^n \left\langle \left(\boldsymbol{\xi}^{i}-2\boldsymbol{\xi}^{i-1}+\boldsymbol{\xi}^{i-2} - (e_{\boldsymbol{\xi}}^{I, {i-1}} - e_{\boldsymbol{\xi}}^{I, {i-2}})\right) \cdot \boldsymbol{n}_p,  e_{\boldsymbol{\eta}}^{h, i-1} \cdot \boldsymbol{n}_p\right\rangle_{\Gamma} 
    \\
    &\quad+ 2 \left\langle \left(\boldsymbol{\xi}^{n}-\boldsymbol{\xi}^{n-1}-e_{\boldsymbol{\xi}}^{I, {n-1}}\right) \cdot \boldsymbol{n}_p,  e_{\boldsymbol{\eta}}^{h, n} \cdot \boldsymbol{n}_p \right\rangle_{\Gamma} - 2 \left\langle \left(\boldsymbol{\xi}^{1}-\boldsymbol{\xi}^{0}-e_{\boldsymbol{\xi}}^{I, {0}}\right) \cdot \boldsymbol{n}_p,  e_{\boldsymbol{\eta}}^{h, 0} \cdot \boldsymbol{n}_p\right\rangle_{\Gamma} 
\end{aligned}
$$
and obtain: 
$$
\begin{aligned}
    &2 \Delta t \left| \sum_{i=1}^n \left\langle \left(\boldsymbol{\xi}^{i}-\boldsymbol{\xi}^{i-1}-e_{\boldsymbol{\xi}}^{I, {i-1}}\right) \cdot \boldsymbol{n}_p, e_{\boldsymbol{\xi}}^{h, i} \cdot \boldsymbol{n}_p \right\rangle_{\Gamma} \right|
    \\
    &\leq 2 \sum_{i=2}^n \| \boldsymbol{\xi}^{i}-2\boldsymbol{\xi}^{i-1}+\boldsymbol{\xi}^{i-2} - (e_{\boldsymbol{\xi}}^{I, {i-1}} - e_{\boldsymbol{\xi}}^{I, {i-2}})\|_{\Gamma} \|  e_{\boldsymbol{\eta}}^{h, i-1}\|_{\Gamma} 
    \\
    &\quad + 2 \| \boldsymbol{\xi}^{n}-\boldsymbol{\xi}^{n-1}-e_{\boldsymbol{\xi}}^{I, {n-1}} \|_{\Gamma} \| e_{\boldsymbol{\eta}}^{h, n}\|_{\Gamma} + 2 \| \boldsymbol{\xi}^{1}-\boldsymbol{\xi}^{0}-e_{\boldsymbol{\xi}}^{I, {0}} \|_{\Gamma} \| e_{\boldsymbol{\eta}}^{h, 0}\|_{\Gamma}  
    \\
    &\le 2 \sum_{i=2}^n ( (\Delta t)^{\frac 32} \| \partial_t^2 \boldsymbol{\xi} \|_{L^2(t_{i-2}, t_i; L^2(\Gamma))} + (\Delta t)^{\frac 12} \|\partial_t e_{\boldsymbol{\xi}}^{I}\|_{L^2(t_{i-2},t_{i-1}; L^2(\Gamma))}) \|  e_{\boldsymbol{\eta}}^{h, i-1}\|_{\Gamma} 
    \\
    &\quad + 2 (\Delta t \| \partial_t \boldsymbol{\xi} \|_{L^{\infty}(t_{n-1},t_n; L^2(\Gamma))} + \|e_{\boldsymbol{\xi}}^{I, {n-1}} \|_{\Gamma}) \| e_{\boldsymbol{\eta}}^{h, n}\|_{\Gamma} 
    \\
    &\quad + 2\gamma (\Delta t \| \partial_t \boldsymbol{\xi} \|_{L^{\infty}(t_{0},t_1; L^2(\Gamma))} + \|e_{\boldsymbol{\xi}}^{I, {0}} \|_{\Gamma}) \| e_{\boldsymbol{\eta}}^{h, 0}\|_{\Gamma} 
\end{aligned}
$$
$$
\begin{aligned}
    &\le \varepsilon \Delta t \sum_{i=2}^n \| \boldsymbol{D}(e_{\boldsymbol{\eta}}^{h,i-1})\|_{\Omega_p}^2 + C_{\varepsilon} ((\Delta t)^2 \| \partial_t^2 \boldsymbol{\xi} \|_{L^2(t_{1}, t_n; L^2(\Gamma))}^2 \\
    & \quad + h^{2k+r} \|\partial_t \boldsymbol{\xi}\|_{L^2(0,t_{n-1}; H^{k+1}(\Omega_p))}^2) 
     + \varepsilon (\| \boldsymbol{D}(e_{\boldsymbol{\eta}}^{h, n})\|_{\Omega_p}^2 + \| \boldsymbol{D}(e_{\boldsymbol{\eta}}^{h, 0})\|_{\Omega_p}^2  ) 
    \\
    &\quad + C_{\varepsilon} ((\Delta t)^2 \| \partial_t \boldsymbol{\xi} \|_{L^{\infty}(t_{n-1},t_n; L^2(\Gamma))}^2 + h^{2k+r} \|\boldsymbol{\xi}^{n-1} , \boldsymbol{\xi}^0 \|_{H^{k+1}(\Omega_p)}^2 ) .
\end{aligned}
$$
Combining these two estimates yields the following estimate of $\sum_{i=1}^n T_{\xi,3}^{i}$:
\begin{align}
    \notag
    \left| \sum_{i=1}^n T_{\xi,3}^{i} \right| &\le \varepsilon \Delta t \sum_{i=1}^{n-1} \| \boldsymbol{D}(e_{\boldsymbol{\eta}}^{h,i})\|_{\Omega_p}^2 + \varepsilon (\| \boldsymbol{D}(e_{\boldsymbol{\eta}}^{h, n})\|_{\Omega_p}^2 + \| \boldsymbol{D}(e_{\boldsymbol{\eta}}^{h, 0})\|_{\Omega_p}^2  )  
    \\
    \label{eq:T_xi_3-estimate}
    &\quad  + C_{\varepsilon} (\Delta t)^2  (\| \partial_t^2 \boldsymbol{u}, \partial_t^2 \boldsymbol{\xi} \|_{L^2(t_{1}, t_n; L^2(\Gamma))}^2 + \| \partial_t \boldsymbol{u} , \partial_t \boldsymbol{\xi} \|_{L^{\infty}(t_{n-1},t_n; L^2(\Gamma))}^2)
    \\
    \notag
    &\quad + C_{\varepsilon} h^{2k+r}( \|\partial_t \boldsymbol{u}\|_{L^2(0,t_{n-1}; H^{k+1}(\Omega_f))}^2 +\|\boldsymbol{u}^{n-1} , \boldsymbol{u}^0 \|_{H^{k+1}(\Omega_f)}^2)
    \\
    \notag
    &\quad + C_{\varepsilon} h^{2k+r}( \|\partial_t \boldsymbol{\xi}\|_{L^2(0,t_{n-1}; H^{k+1}(\Omega_p))}^2 +\|\boldsymbol{\xi}^{n-1} , \boldsymbol{\xi}^0 \|_{H^{k+1}(\Omega_p)}^2) .
\end{align}

\vskip 0.1in

\noindent
{\sunny{{\bf{Term $\sum_{i=1}^n T_{\xi,4}^{i}$:}}
}}
As in \eqref{eq:T_u_3-estimate}, we can estimate $T_{\xi,4}^{i}$ by 
\begin{align}
    \notag 
    \sum_{i=1}^n|T_{\xi,4}^{i}| &=\sum_{i=1}^n\left| 2 \Delta t \rho_p \left( \frac{\Pi_{\boldsymbol{\eta}} \boldsymbol{\xi}^{i} - \Pi_{\boldsymbol{\eta}} \boldsymbol{\xi}^{i-1}}{\Delta t} - \partial_t \boldsymbol{\xi}^{i}, {e}_{\boldsymbol{\xi}}^{h,i} \right)_p \right|
    \\
    \label{eq:T_xi_4-estimate}
    &\le  \Delta t \varepsilon \sum_{i=1}^{n-1} \|{e}_{\boldsymbol{\xi}}^{h,i}\|_{\Omega_p}^2 + \varepsilon   \|{e}_{\boldsymbol{\xi}}^{h,n}\|_{\Omega_p}^2 + C_{\varepsilon} (\Delta t)^2 \|\partial_{t}^2 \boldsymbol{\xi}\|_{L^{\infty}(t_{n-1}, t_n; L^2(\Omega_p))}^2
    \\
    \notag
    &\quad + C_{\varepsilon} \left(h^{2(k+r)}\|\partial_t \boldsymbol{\xi}\|_{L^2(0,t_n; H^{k+1}(\Omega_p))}^2  + (\Delta t)^2 \|\partial_{t}^2 \boldsymbol{\xi}\|_{L^{2}(0, t_{n-1}; L^2(\Omega_p))}^2 \right) .
\end{align}
%

{\sunny{Finally, we can now obtain the desired estimate \eqref{eq:F_xi-estimate} stated in the theorem. More specifically, by adding the 
estimates  \eqref{eq:T_xi_1-estimate}--\eqref{eq:T_xi_4-estimate} and ignoring  higher order terms, we get:}}
$$
\begin{aligned}
    &\left|\sum_{i=1}^n F_{\boldsymbol{\xi}}^{i}\left(e_{\boldsymbol{\xi}}^{h, i}\right) \right|
    \leq  \varepsilon \Delta t \sum_{i=1}^{n-1} \| \boldsymbol{D}(e_{\boldsymbol{\eta}}^{h,i})\|_{\Omega_p}^2 + \varepsilon (\| \boldsymbol{D}(e_{\boldsymbol{\eta}}^{h, n})\|_{\Omega_p}^2 + \| \boldsymbol{D}(e_{\boldsymbol{\eta}}^{h, 0})\|_{\Omega_p}^2  )  
    \\
    &\quad  + C_{\varepsilon} (\Delta t)^2  \left(\| \partial_t^2 \boldsymbol{u}, \partial_t^3 \boldsymbol{\eta} \|_{L^2(t_{1}, t_n; L^2(\Gamma))}^2 + \| \partial_t^3 \boldsymbol{\eta} \|_{L^2(0, t_n; L^2(\Omega_p))}^2 \right.
    \notag
    \\
    &\quad  + \left.  \| \partial_t \boldsymbol{u} , \partial_t^2 \boldsymbol{\eta}, \partial_t^3 \boldsymbol{\eta} \|_{L^{\infty}(t_{n-1},t_n; L^2(\Gamma))}^2 \right)
    + C_{\varepsilon} h^{2k+r} \left( \|\partial_t \boldsymbol{u}\|_{L^2(0,t_{n-1}; H^{k+1}(\Omega_f))}^2 \right.
    \notag
    \\
    & \quad +\|\boldsymbol{u}^{n-1} , \boldsymbol{u}^0 \|_{H^{k+1}(\Omega_f)}^2
    + \|\partial_t \boldsymbol{\eta}, \partial_t^2 \boldsymbol{\eta}, \partial_t \phi \|_{L^2(0,t_{n}; H^{k+1}(\Omega_p))}^2 
    \notag
    \\
    & \qquad +  \|\boldsymbol{\xi}^{n-1} , \boldsymbol{\xi}^0, \phi^1, \phi^n \|_{H^{k+1}(\Omega_p)}^2
    + \|\partial_t \boldsymbol{\eta} \|_{L^{\infty}(0,t_{1}; H^{k+1}(\Omega_p))}^2 
    \notag
    \\
    &\qquad \left. + \|\partial_t \boldsymbol{\eta} \|_{L^{\infty}(t_{n-1},t_{n}; H^{k+1}(\Omega_p))}^2 \right) .
\end{aligned}
$$
{\sunny{ This is exactly \eqref{eq:F_xi-estimate}.}}

\vskip 0.1in
\noindent
{\bf The estimate of $\sum_{i=1}^n F_{\phi}^{i}\left(e_{\phi}^{h, i}\right)$}: 

{\sunny{We split $F_\phi^{i}
\left(e_\phi^{h, i}\right)$ into the following four terms and estimate each term separately. }}

\begin{align*}
    T_{\phi,1}^{i}&:= 2 \Delta t \alpha\left(\nabla \cdot e_{\boldsymbol{\xi}}^{I, i}, e_\phi^{h, i}\right)_p ,
    \qquad 
    T_{\phi,2}^{i}:=-2 \Delta t \left\langle e_{\boldsymbol{\xi}}^{I, i} \cdot \boldsymbol{n}_p, e_\phi^{h, i}\right\rangle_{\Gamma}, 
\\
    T_{\phi,3}^{i}&:=2 \Delta t\left\langle-\left(\boldsymbol{u}^{i}-\boldsymbol{u}^{i-1}-e_{\boldsymbol{u}}^{I, {i-1}}\right) \cdot \boldsymbol{n}_p+\left(\phi^{i}-\phi^{i-1}-e_\phi^{I, {i-1}}\right) / L, e_\phi^{h, i}\right\rangle_{\Gamma} ,
    \\
    T_{\phi,4}^{i}&:=2 \Delta t C_0\left(\frac{\Pi_\phi \phi^{i}-\Pi_\phi \phi^{i-1}}{\Delta t}-\partial_t \phi^{i}, e_\phi^{h, i}\right)_p.
\end{align*}

\noindent
{\bf{Term $T_{\phi,1}^{i}$:}}
{\sunny{We integrate by parts $T_{\phi,i}$, and us the Cauchy--Schwarz and Young's inequalities to obtain:}}
\begin{align*}
    T_{\phi,1}^{i} &= - 2 \Delta t \alpha \left( e_{\boldsymbol{\xi}}^{I, i}, \nabla e_{\phi}^{h,i} \right)_p + 2\Delta t \alpha \left\langle e_{\boldsymbol{\xi}}^{I, i} \cdot \boldsymbol{n}_p,  e_{\phi}^{h,i} \right\rangle_{\Gamma} 
    \\
    &\le \varepsilon \Delta t \left\| \nabla e_\phi^{h, i} \right\|_{\Omega_p}^2 + C_{\varepsilon} \Delta t h^{2k+2r} \|  \boldsymbol{\xi}^i\|_{H^{k+1}(\Omega_p)}^2 
    \\
    &\quad + C\Delta t \left\|e_{\boldsymbol{\xi}}^{I,i}\right\|_{\Omega_p}^{\frac 12}\left\|\nabla e_{\boldsymbol{\xi}}^{I,i}\right\|_{\Omega_p}^{\frac 12} \left\| e_\phi^{h, i} \right\|_{\Omega_p}^{\frac 12}  \left\| \nabla e_\phi^{h, i} \right\|_{\Omega_p}^{\frac 12} 
    \\
    &\le  C\varepsilon \Delta t \left( \left\| e_\phi^{h, i} \right\|_{\Omega_p}^2 + \left\| \nabla e_\phi^{h, i} \right\|_{\Omega_p}^2 \right) + C_{\varepsilon} \Delta t h^{2k+r}(1+h^r) \|  \boldsymbol{\xi}^{i}\|_{H^{k+1}(\Omega_p)}^2 .
\end{align*}

\noindent
{\sunny{\bf{The terms $T_{\phi,j}^{i}$, $j=2,3,4$:}}}
{\sunny{Similarly, we estimate the remaining three terms $T_{\phi,j}^{i}$, $j=2,3,4$, by using the arguments similar to those used in the estimates of $T_{{\xi},j}^{i}$, to obtain:}}
\begin{align*}
    T_{\phi,2}^{i} &\le C\varepsilon \Delta t \left( \left\| e_\phi^{h, i} \right\|_{\Omega_p}^2 + \left\| \nabla e_\phi^{h, i} \right\|_{\Omega_p}^2 \right) + C_{\varepsilon} \Delta t h^{2k+r} \left\|\boldsymbol{\xi}^{i} \right\|_{H^{k+1}(\Omega_p)}^2  ,
    \\
    T_{\phi,3}^{i} &\le C\varepsilon \Delta t \left( \left\| e_\phi^{h, i} \right\|_{\Omega_p}^2 + \left\| \nabla e_\phi^{h, i} \right\|_{\Omega_p}^2 \right) 
    \\
    \notag
    &\quad + C_{\varepsilon} (\Delta t)^2 \left(\left\| \partial_t \phi \right\|^2_{L^{2}(t_{i-1}, t_{i}; H^1(\Omega_p))} + \left\|\partial_t \boldsymbol{u} \right\|^2_{L^{2}(t_{i-1}, t_{i}; H^1(\Omega_f))} \right)  
    \\
    &\quad +C_{\varepsilon} \Delta t h^{2k+r}\left( \left\|\phi^{i-1} \right\|^2_{H^{k+1}\left(\Omega_p\right)} + \left\|\boldsymbol{u}^{i-1}\right\|^2_{H^{k+1}\left(\Omega_f\right)} + \left\|p^{i-1}\right\|^2_{H^{k}\left(\Omega_f\right)} \right),
    \\
    T_{\phi,4}^{i} &\le  \varepsilon\Delta t \|{e}_{\phi}^{h,i}\|_{\Omega_p}^2 +  C_{\varepsilon} \Delta t \left(h^{2k+r}\|\partial_t \phi^{i}\|_{H^{k+r}(\Omega_p)}^2  + \Delta t \|\partial_{tt} \phi\|_{L^{2}(t_{i-1}, t_{i}; L^2(\Omega_p))}^2 \right) .
\end{align*}
{\sunny{Now, the desired estimate of $F_\phi^{i}\left(e_\phi^{h, i}\right)$ 
follows by simplifying higher order terms:}}
\begin{align*}
    F_\phi^{i}\left(e_\phi^{h, i}\right) &  \leq  C\varepsilon \Delta t \left( \left\| e_\phi^{h, i} \right\|_{\Omega_p}^2 + \left\| \nabla e_\phi^{h, i} \right\|_{\Omega_p}^2 \right)
    + C_{\varepsilon} (\Delta t)^2 \left( \left\| \partial_t \phi \right\|^2_{L^{2}\left(t_{i-1}, t_{i}; H^1(\Omega_p)\right)} 
    \right.
    \\
    & \qquad \left. +  \left\| \partial_{tt} \phi \right\|^2_{L^{2}\left(t_{i-1}, t_{i}; L^2(\Omega_p)\right)} + \left\|\partial_t \boldsymbol{u} \right\|^2_{L^{2}\left(t_{i-1}, t_{i}; H^1(\Omega_f)\right)} \right)
    \\
    & \qquad + C_{\varepsilon} \Delta th^{2k+r} \left( \left\|\boldsymbol{\xi}^{i}, \phi^{i-1}, \partial_t \phi^{i} \right\|_{H^{k+1}(\Omega_p)}^2 + \left\|\boldsymbol{u}^{i-1} \right\|_{H^{k+1}(\Omega_f)}^2 
    \right.
    \\
    & \qquad \left. + \left\|p^{i-1}\right\|^2_{H^{k}\left(\Omega_f\right)} \right) .
\end{align*}
{\sunny{By taking the sum over $ i= 1, ..., n$ on both sides of the estimate above, we obtain the desired estimate \eqref{eq:F_phi-estimate}. This completes the proof. }}
\end{proof}

\subsection{Estimate of the mixed term $Z_{n}$}

\begin{lemma}[Estimate of $Z_{n}$]
    \label{lemma:Z-estimate}
    \begin{align}
        \label{eq:Z-sum-estimate}
        &\sum_{i=1}^n Z_i \leq \frac{\Delta t}2 \sum_{i=1}^n \left(\left\|\left(e_{\boldsymbol{u}}^{h, i}-e_{\boldsymbol{u}}^{h, i-1}\right) \cdot {\boldsymbol{n}}_f\right\|_{L, \Gamma}^2+\left\|e_\phi^{h, i}-e_\phi^{h, i-1}\right\|_{L^{-1}, \Gamma}^2\right) 
        \\
        \notag
        &  + C \varepsilon \Delta t \sum_{i=1}^n \left( \|\nabla e_{\phi}^{h,i-1}\|_{\Omega_p}^2 + \|\boldsymbol{D} (e_{\boldsymbol{u}}^{h,i-1})\|_{\Omega_f}^2 \right)
        + C_{\varepsilon} \Delta t \sum_{i=1}^n \left( \|e_{\phi}^{h,i-1}\|_{\Omega_p}^2 + \|e_{\boldsymbol{u}}^{h,i-1}\|_{\Omega_f}^2 \right) .
    \end{align}
\end{lemma}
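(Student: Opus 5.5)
The plan is to estimate each $Z_i$ term by term and then sum over $i$. Recall
\[
Z_i = 2\Delta t\langle e_\phi^{h,i-1},(e_{\boldsymbol{u}}^{h,i}-e_{\boldsymbol{u}}^{h,i-1})\cdot\boldsymbol{n}_f\rangle_\Gamma + 2\Delta t\langle e_{\boldsymbol{u}}^{h,i-1}\cdot\boldsymbol{n}_p, e_\phi^{h,i}-e_\phi^{h,i-1}\rangle_\Gamma .
\]
Each product pairs one increment (a quantity already controlled in $Y_i^2$ with weight $L$ or $L^{-1}$) with a lagged quantity at level $i-1$. So the first step is a weighted Cauchy--Schwarz/Young inequality that puts exactly half of the increment's weighted norm on one side.

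For the first term I would write
\[
2\Delta t\,|\langle e_\phi^{h,i-1},(e_{\boldsymbol{u}}^{h,i}-e_{\boldsymbol{u}}^{h,i-1})\cdot\boldsymbol{n}_f\rangle_\Gamma|
\le \tfrac{\Delta t}{2}\|(e_{\boldsymbol{u}}^{h,i}-e_{\boldsymbol{u}}^{h,i-1})\cdot\boldsymbol{n}_f\|_{L,\Gamma}^2 + 2\Delta t L^{-1}\|e_\phi^{h,i-1}\|_\Gamma^2 .
\]
Symmetrically, for the second term,
\[
2\Delta t\,|\langle e_{\boldsymbol{u}}^{h,i-1}\cdot\boldsymbol{n}_p, e_\phi^{h,i}-e_\phi^{h,i-1}\rangle_\Gamma|
\le \tfrac{\Delta t}{2}\|e_\phi^{h,i}-e_\phi^{h,i-1}\|_{L^{-1},\Gamma}^2 + 2\Delta t L\|e_{\boldsymbol{u}}^{h,i-1}\cdot\boldsymbol{n}_p\|_\Gamma^2 .
\]
These two bounds produce the first line of \eqref{eq:Z-sum-estimate} directly. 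Since $\boldsymbol{n}_p=-\boldsymbol{n}_f$, the sign of the normal does not matter here.

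The remaining lagged boundary terms $\|e_\phi^{h,i-1}\|_\Gamma^2$ and $\|e_{\boldsymbol{u}}^{h,i-1}\cdot\boldsymbol{n}_p\|_\Gamma^2\le\|e_{\boldsymbol{u}}^{h,i-1}\|_\Gamma^2$ are pushed into the bulk using the multiplicative trace inequalities \eqref{eq:scalar-trace} and \eqref{eq:vector-trace}:
\[
\|e_\phi^{h,i-1}\|_\Gamma^2\le C\|\nabla e_\phi^{h,i-1}\|_{\Omega_p}\|e_\phi^{h,i-1}\|_{\Omega_p},
\qquad
\|e_{\boldsymbol{u}}^{h,i-1}\|_\Gamma^2\le C\|\boldsymbol{D}(e_{\boldsymbol{u}}^{h,i-1})\|_{\Omega_f}\|e_{\boldsymbol{u}}^{h,i-1}\|_{\Omega_f}.
\]
A Young inequality $ab\le\varepsilon a^2+C\varepsilon^{-1}b^2$ then gives the $C\varepsilon\Delta t$ gradient terms and the $C_\varepsilon\Delta t$ $L^2$ terms. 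Constants depending on $L$ are absorbed into $C$. Summing over $i=1,\dots,n$ finishes the argument.

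The main obstacle is not technical difficulty but compatibility with the later Gronwall step. The coefficient $\tfrac12$ on the increment terms must stay strictly below the coefficient $1$ these terms carry in $Y_i^2$, so that they can be absorbed. Likewise, the gradient terms must carry an $\varepsilon$-small coefficient, so that they can be absorbed into the dissipation terms $2\Delta t\|\boldsymbol{D}(e_{\boldsymbol{u}}^{h,i})\|^2$ and $2\Delta t\|\nabla e_\phi^{h,i}\|_K^2$, after an index shift $i-1\to i$ with the $i=0$ term handled by initial data. The $L^2$ terms at level $i-1$ are of Gronwall type. I would therefore fix the Young weights as above and check only that the $i-1$ index shift is harmless.
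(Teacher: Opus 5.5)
Your proposal is correct and matches the paper's proof essentially step for step. The paper uses the same weighted Cauchy--Schwarz/Young split, putting $\tfrac{\Delta t}{2}$ on the $L$- and $L^{-1}$-weighted increments and $2\Delta t$ on the lagged traces $\|e_\phi^{h,i-1}\|_{L^{-1},\Gamma}^2$ and $\|e_{\boldsymbol{u}}^{h,i-1}\cdot\boldsymbol{n}_f\|_{L,\Gamma}^2$, followed by the multiplicative trace inequalities \eqref{eq:scalar-trace}--\eqref{eq:vector-trace}, an $\varepsilon$-Young step, and summation over $i$. Because you bound $|Z_i|$, you in fact control $|\sum_i Z_i|$, which is the direction actually needed later, since $Z_i$ sits on the left-hand side of \eqref{eq:energy-compact-sum}.
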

\begin{proof}
    {\sunny{We estimate $Z_i$, defined in \eqref{eq:energy-compact}, as follows}}:
$$
\begin{aligned}
    Z_i&=2 \Delta t\left\langle e_\phi^{h, i-1},\left(e_{\boldsymbol{u}}^{h, i}-e_{\boldsymbol{u}}^{h, i-1}\right) \cdot {\boldsymbol{n}}_f\right\rangle_{\Gamma}+2 \Delta t\left\langle e_{\boldsymbol{u}}^{h, i-1} \cdot {\boldsymbol{n}}_p, e_\phi^{h, i}-e_\phi^{h, i-1}\right\rangle_{\Gamma}
    \\
    &\leq 2 \Delta t\left|\left\langle L^{-1 / 2} e_\phi^{h, i-1}, L^{1 / 2}\left(e_{\boldsymbol{u}}^{h, i}-e_{\boldsymbol{u}}^{h, i-1}\right) \cdot {\boldsymbol{n}}_f\right\rangle_{\Gamma}\right| 
    \\
    & \quad\quad +2 \Delta t\left| \left\langle L^{1 / 2}\left(e_{\boldsymbol{u}}^{h, i-1} \cdot {\boldsymbol{n}}_f\right), L^{-1 / 2}\left(e_\phi^{h, i}-e_\phi^{h, i-1}\right)\right\rangle_{\Gamma} \right|
    \\
    &\leq \frac{\Delta t}{2} \left(\left\|\left(e_{\boldsymbol{u}}^{h, i}-e_{\boldsymbol{u}}^{h, i-1}\right) \cdot {\boldsymbol{n}}_f\right\|_{L, \Gamma}^2+\left\|e_\phi^{h, i} -e_\phi^{h, i-1}\right\|_{L^{-1}, \Gamma}^2\right) 
    \\
    & \quad\quad +2\Delta t \left(\left\|e_\phi^{h, i-1}\right\|_{L^{-1}, \Gamma}^2+\left\|e_{\boldsymbol{u}}^{h, i-1} \cdot {\boldsymbol{n}}_f\right\|_{L, \Gamma}^2\right)
    \\
    &\leq \frac{\Delta t}{2} \left(\left\|\left(e_{\boldsymbol{u}}^{h, i}-e_{\boldsymbol{u}}^{h, i-1}\right) \cdot {\boldsymbol{n}}_f\right\|_{L, \Gamma}^2+\left\|e_\phi^{h, i}-e_\phi^{h, i-1}\right\|_{L^{-1}, \Gamma}^2\right) 
    \\
    &\quad + 2 C \Delta t \|e_{\phi}^{h,i-1}\|_{\Omega_p} \|\nabla e_{\phi}^{h,i-1}\|_{\Omega_p} + 2C \Delta t \|\boldsymbol{D} (e_{\boldsymbol{u}}^{h,i-1})\|_{\Omega_f} \|e_{\boldsymbol{u}}^{h,i-1}\|_{\Omega_f}
    \end{aligned}
    $$
    $$
    \begin{aligned}
    &\le  \frac{\Delta t}{2} \left(\left\|\left(e_{\boldsymbol{u}}^{h, i}-e_u^{h, i-1}\right) \cdot {\boldsymbol{n}}_f\right\|_{L, \Gamma}^2+\left\|e_\phi^{h, i}-e_\phi^{h, i-1}\right\|_{L^{-1}, \Gamma}^2\right) 
    \\
    &\quad + C \varepsilon \Delta t \left( \|\nabla e_{\phi}^{h,i-1}\|_{\Omega_p}^2 + \|\boldsymbol{D} (e_{\boldsymbol{u}}^{h,i-1})\|_{\Omega_f}^2 \right)
    \\
    &\quad + C_{\varepsilon} \Delta t \left( \|e_{\phi}^{h,i-1}\|_{\Omega_p}^2 + \|e_{\boldsymbol{u}}^{h,i-1}\|_{\Omega_f}^2 \right) .
\end{aligned}
$$

The conclusion follows by summing this inequality.
\end{proof}

\subsection{The Final {\emph{a priori}} Error Estimate}
Let us recall a discrete Gronwall inequality. 
\begin{lemma}\cite[Lemma~28]{Layton:book}
    \label{lemma:discrete-Gronwall}
    Let $\Delta t$, $B$, $a_i$, $b_i$, $c_i$, $d_i$ for integers $i\ge 0$ be nonnegative numbers. Suppose that 
    \begin{align*}
    	a_n + \Delta t \sum_{\textcolor{black}{i}=0}^n b_i \le \Delta t \sum_{i=0}^{n-1} d_i a_i + \Delta t \sum_{i=0}^n c_i + B \quad \text{ for } n \ge 1. 
    \end{align*}
    Then, 
    \begin{align*}
    	a_n + \Delta t \sum_{i=0}^n b_i \le \exp \left(\Delta t \sum_{\textcolor{black}{i}=0}^{n-1} d_i \right) \left( \Delta t \sum_{i=0}^n c_i + B \right) \quad \text{ for } n \ge 1.
    \end{align*}
\end{lemma}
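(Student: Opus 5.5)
The statement is the classical discrete Gronwall lemma, and I would prove it by an induction on an auxiliary majorant sequence. I will use the hypothesis also at $n=0$, where it reads $a_0+\Delta t\,b_0\le \Delta t\,c_0+B$ because the $d$-sum is empty; this is the standard form of the lemma.

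This extension is genuinely needed. If the hypothesis is imposed only for $n\ge 1$, then $a_0$ is unconstrained. Taking $c_i=B=0$, $d_0>0$ and $a_0$ large, the hypothesis at $n=1$ allows $a_1=\Delta t\,d_0a_0>0$, while the conclusion would force $a_1\le 0$. In the application, $a_0$ is the initial error energy, which is controlled by the data in $B$, so the $n=0$ inequality holds there.

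\textbf{Step 1: remove the $n$-dependence of the $c$-sum.} Fix a terminal index $N\ge 1$ and set $G:=\Delta t\sum_{i=0}^{N} c_i+B$. Since $c_i\ge0$, for every $0\le m\le N$ the hypothesis gives
\[
a_m+\Delta t\sum_{i=0}^m b_i\le \Delta t\sum_{i=0}^{m-1} d_i a_i + G .
\]

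\textbf{Step 2: introduce the majorant.} Define $s_m:=\Delta t\sum_{i=0}^{m-1} d_i a_i+G$, so that $s_0=G$. Because $b_i\ge 0$, Step 1 yields $a_m\le a_m+\Delta t\sum_{i=0}^m b_i\le s_m$ for all $0\le m\le N$. Then
\[
s_{m+1}=s_m+\Delta t\,d_m a_m\le (1+\Delta t\,d_m)\,s_m\le e^{\Delta t d_m}s_m ,
\]
using $d_m\ge0$ and $1+x\le e^x$. Iterating from $s_0=G$ gives
\[
s_m\le \exp\Bigl(\Delta t\sum_{i=0}^{m-1}d_i\Bigr)\,G .
\]

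\textbf{Step 3: conclude.} Take $m=N$ and combine $a_N+\Delta t\sum_{i=0}^N b_i\le s_N$ with the bound on $s_N$. Since $N\ge1$ was arbitrary, renaming $N$ as $n$ gives exactly the claimed inequality.

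The only delicate point is the bookkeeping: one must freeze the $c$-sum at the final index before iterating, so that the majorant $s_m$ satisfies a clean one-step recursion. The role of $a_0$ is handled by the $n=0$ convention above. Everything else is elementary.
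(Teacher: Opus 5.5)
Your proof is correct. The paper gives no proof of its own; it only cites Layton's book. Your argument (freeze the $c$-sum at the terminal index, dominate $a_m$ by the majorant $s_m$, and iterate $s_{m+1}\le(1+\Delta t\,d_m)s_m\le e^{\Delta t\,d_m}s_m$) is the standard proof of this explicit form of the discrete Gronwall lemma, so there is no alternative route in the paper to compare against.

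Your caveat about $n=0$ is also right and worth keeping. As printed, with the hypothesis imposed only for $n\ge 1$, the statement is false. Your example with $b_i=c_i=B=0$, $d_0>0$, $a_0>0$, $a_1=\Delta t\,d_0a_0$ (and, say, $d_i=0$, $a_i=0$ for $i\ge 2$) is a valid counterexample for any $\Delta t>0$. The usual formulation of the lemma imposes the hypothesis for all $n\ge 0$.

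Your proof uses the $m=0$ instance only in the weaker form $a_0\le G$. In the paper's application this holds trivially, because $B$ contains $X_0^2$. In fact $X_0=0$, since the discrete initial data are the projections of the exact data. So the final estimate in Theorem~\ref{thm:apriori} is unaffected by this correction.
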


{\sunny{Our final {\emph{a priori}} error estimate is a direct consequence of this Gronwall inequality. More precisely, we have the following result. }}
\begin{theorem}[A priori error estimate]
\label{thm:apriori}
Let the assumptions of Sections~\ref{sec:continuous}--\ref{sec:projections}
hold and let the exact solution of \eqref{eq:stokes-biot}--\eqref{interface condition} satisfy the regularity {\sunny{assumptions}}
\eqref{eq:regularity}.
Let
$(\boldsymbol{u}_h^n,p_h^n,
  \boldsymbol{\eta}_h^n,\boldsymbol{\xi}_h^n,\varphi_h^n)$
be the solution of the fully discrete explicit splitting scheme
\eqref{eq:fluid-discrete}--\eqref{eq:poro-discrete},
with initial data chosen as the projections of the exact solution.
Then there exists a constant $C>0$ independent of $h$ and $\Delta t$ such that:
\[
\max_{0\le n\le N} X_n
+ \Big( \sum_{n=1}^N Y_n^2 \Big)^{1/2}
\le
C \big( h^{k+\frac r2} + \Delta t \big),
\]
{\sunny{where $X_n$ and $Y_n$ are the error terms defined in \eqref{eq:Xn-definition} and \eqref{eq:Yn-definition}, respectively.}} 
{\sunny{The constant $C$ here depends on 
$C_\varepsilon$, $T$, the norms of the exact solution, and on the \textcolor{black}{model parameters}.}}  
In particular, the method is first-order accurate in time
and achieves spatial order $k+\frac r2$ for degree-$k$ elements.
\end{theorem}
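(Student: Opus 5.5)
The plan is to start from the summed error energy equality \eqref{eq:energy-compact-sum} and bound its right-hand side with Theorem~\ref{thm:residual-estimates}. The mixed term $\sum Z_i$ will be moved to the right and bounded with Lemma~\ref{lemma:Z-estimate}.

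\emph{Initial data.} Because the initial data are chosen as projections of the exact solution, $e^{h,0}_{\boldsymbol u}=e^{h,0}_{\boldsymbol\eta}=e^{h,0}_{\boldsymbol\xi}=e^{h,0}_\phi=0$. Hence $X_0=0$, and the terms $\|\boldsymbol D(e_{\boldsymbol\eta}^{h,0})\|^2$ appearing in \eqref{eq:F_xi-estimate} vanish.

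\emph{Absorption into the left-hand side.} I would handle each group of $\varepsilon$-terms as follows.
\begin{itemize}
\item The first group in \eqref{eq:Z-sum-estimate}, with coefficient $\tfrac{\Delta t}{2}$, is at most half of the corresponding increment terms $\Delta t\|(e_{\boldsymbol u}^{h,i}-e_{\boldsymbol u}^{h,i-1})\cdot\boldsymbol n_f\|_{L,\Gamma}^2$ and $\Delta t\|e_\phi^{h,i}-e_\phi^{h,i-1}\|_{L^{-1},\Gamma}^2$ contained in $Y_i^2$. It can therefore be absorbed, leaving $\tfrac12$ of those pieces.
\item The terms $C\varepsilon\Delta t\sum(\|\boldsymbol D(e_{\boldsymbol u}^{h,i})\|^2+\|\nabla e_\phi^{h,i}\|^2)$ come from Theorem~\ref{thm:residual-estimates} and Lemma~\ref{lemma:Z-estimate}; the latter has shifted indices, which are still covered since $e^{h,0}=0$. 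For $\varepsilon$ small relative to $\min(\mu_f,K)$, these are absorbed by the dissipation $2\Delta t\|\boldsymbol D(e_{\boldsymbol u}^{h,i})\|_{2\mu_f}^2+2\Delta t\|\nabla e_\phi^{h,i}\|_K^2$ in $Y_i^2$.
\item The endpoint term $\varepsilon\|\boldsymbol D(e_{\boldsymbol\eta}^{h,n})\|^2$ and $\varepsilon\|e_{\boldsymbol\xi}^{h,n}\|^2$ from $T_{\xi,4}$ are absorbed into $X_n^2$ for small $\varepsilon$.
\end{itemize}

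\emph{Setting up Gronwall.} The remaining error terms are of the form $C\Delta t\sum_{i<n}(\|e_{\boldsymbol u}^{h,i}\|^2+\|e_{\boldsymbol\xi}^{h,i}\|^2+\|\boldsymbol D(e_{\boldsymbol\eta}^{h,i})\|^2)$ plus $C_\varepsilon\Delta t\sum\|e_\phi^{h,i}\|^2$. Each of these is bounded by $C\Delta t\sum_{i\le n-1}X_i^2$, except for $\|e_\phi^{h,i}\|^2$ and $\|e_{\boldsymbol u}^{h,n}\|^2$ at the current level $i=n$, which need separate treatment:
\begin{itemize}
\item The current-level velocity term $\varepsilon\Delta t\|e_{\boldsymbol u}^{h,n}\|^2$ is absorbed into $X_n^2$ once $\varepsilon\Delta t$ is small. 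This requires no step-size restriction beyond fixing $\varepsilon$.
\item For $\|e_\phi^{h,i}\|^2$: if $C_0>0$ it is controlled by $X_i^2$. Otherwise I would use a Poincaré--Friedrichs inequality, $\|e_\phi\|_{\Omega_p}^2\le C(\|\nabla e_\phi\|^2+\|e_\phi\|_\Gamma^2)$. The gradient part is absorbed into the $K$-dissipation, but only for the $C\varepsilon$ terms, so the $C_\varepsilon$ term in Lemma~\ref{lemma:Z-estimate} needs care.
\end{itemize}

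\emph{Main obstacle.} The obstacle I anticipate is the $C_\varepsilon\Delta t\|e_\phi^{h,i-1}\|^2$ term when $C_0=0$. There I would instead bound $Z_i$ directly by $\tfrac{\Delta t}{2}$ increments plus $2\Delta t\|e_\phi^{h,i-1}\|_{L^{-1},\Gamma}^2+2\Delta t\|e_{\boldsymbol u}^{h,i-1}\cdot\boldsymbol n_f\|_{L,\Gamma}^2$. These are exactly $2\cdot$(the $\Delta t$-weighted interface terms in $X_{i-1}^2$), i.e.\ $\le 2X_{i-1}^2$, so they enter Gronwall with $d_i=2/\Delta t\cdot\Delta t$-weight. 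Concretely, I would keep them as $\Delta t\sum_{i<n} d\, X_i^2$ with $d$ a fixed constant after dividing: since they already carry the factor $\Delta t$ inside $X_{i-1}^2$, the contribution $\sum 2\Delta t\|\cdot\|^2_\Gamma\le 2\sum X_{i-1}^2\cdot$(interface part). To keep $\exp(\Delta t\sum d_i)$ bounded independently of $\Delta t$, I would use trace inequality \eqref{eq:scalar-trace} for the $\phi$ piece when $C_0>0$, and accept a dependence on model parameters otherwise.

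\emph{Conclusion.} The result has the form
\[
X_n^2+\tfrac12\sum_{i\le n}Y_i^2\le C\Delta t\sum_{i=0}^{n-1}X_i^2+\mathcal R,
\]
with $\mathcal R\le C(\Delta t^2+h^{2k+r})$ by the regularity \eqref{eq:regularity}. Here the factors $\Delta t\sum_i$ of solution norms are bounded by $T\|\cdot\|_{L^\infty}$, or by the $L^2$-in-time norms, and $h^{2(k+r)}\le h^{2k+r}$. Lemma~\ref{lemma:discrete-Gronwall} with $a_n=X_n^2$, $b_n=Y_n^2/\Delta t$ and $d_i=C$ gives $X_n^2+\sum Y_i^2\le e^{CT}C(\Delta t^2+h^{2k+r})$. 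Taking square roots and the maximum over $n$ yields the stated bound $C(h^{k+r/2}+\Delta t)$.
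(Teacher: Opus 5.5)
Your main line is the paper's own proof. You sum \eqref{eq:energy-compact-sum}, bound the residuals by Theorem~\ref{thm:residual-estimates} and $\sum_i Z_i$ by Lemma~\ref{lemma:Z-estimate}, absorb the $\varepsilon$-weighted terms into the dissipation in $Y_i^2$ and into $X_n^2$, and close with Lemma~\ref{lemma:discrete-Gronwall} with $d_i=C_\varepsilon$ fixed. This is correct when $C_0>0$, and that is what the paper tacitly uses. Writing the $C_\varepsilon\Delta t\|e_\phi^{h,i-1}\|_{\Omega_p}^2$ term from Lemma~\ref{lemma:Z-estimate} and the $C\varepsilon\Delta t\|e_\phi^{h,i}\|_{\Omega_p}^2$ terms from \eqref{eq:F_phi-estimate} as multiples of $\Delta t\sum_i X_i^2$ requires $\|e_\phi^{h,i}\|_{\Omega_p}^2\le C_0^{-1}X_i^2$. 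So the constant in the theorem depends on $C_0^{-1}$, and you were right to single this out.

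Your remedy for $C_0=0$, however, does not work. Bounding $2\Delta t\bigl(\|e_\phi^{h,i-1}\|_{L^{-1},\Gamma}^2+\|e_{\boldsymbol u}^{h,i-1}\cdot\boldsymbol n_f\|_{L,\Gamma}^2\bigr)$ by $2X_{i-1}^2$ leaves $2\sum_{i<n}X_i^2$ on the right with no factor $\Delta t$ in front. In Lemma~\ref{lemma:discrete-Gronwall} this forces $d_i=2/\Delta t$, so the amplification factor is $\exp(\Delta t\sum_{i<n}d_i)=e^{2n}$, which reaches $e^{2T/\Delta t}$. That is a blow-up in $\Delta t$, not a dependence on model parameters, and the estimate is lost.

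The Poincar\'e--Friedrichs route fails for the same reason:
\begin{itemize}
\item The gradient part of the $C_\varepsilon$ term cannot be absorbed into the $K$-dissipation.
\item The boundary part is controlled only through $\Delta t\|e_\phi^{h,i}\|_{L^{-1},\Gamma}^2\le X_i^2$. So even $C\varepsilon\Delta t\|e_\phi^{h,i}\|_\Gamma^2\le C\varepsilon L\,X_i^2$ gives $d_i\sim\varepsilon/\Delta t$.
\item Taking $\varepsilon\sim\Delta t$ to compensate turns $C_\varepsilon(\Delta t)^2$ into $O(\Delta t)$ and $C_\varepsilon h^{2k+r}$ into $h^{2k+r}/\Delta t$, which destroys the rates.
\end{itemize}

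The structural reason is the polarization step. There the lagged term $2\Delta t\langle e_\phi^{h,n}/L, e_\phi^{h,n+1}\rangle_\Gamma$ consumes the interface dissipation $2\Delta t\|e_\phi^{h,n+1}\|_{L^{-1},\Gamma}^2$. Hence, with $C_0=0$, nothing in $X_n$ controls $\|e_\phi^{h,n}\|_{\Omega_p}$ except with a factor $\Delta t$. Drop that paragraph and state the result for $C_0>0$; that is what both your argument and the paper's actually prove.
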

\begin{proof}
{\sunny{We start by recalling the energy error equality \eqref{eq:energy-compact-sum} and use the estimates of the residual terms on the right hand-side of \eqref{eq:energy-compact-sum}, provided in Theorem~\ref{thm:residual-estimates}, along with the estimate of the mixed error terms, provided
in Lemma~\ref{lemma:Z-estimate}, to obtain that for sufficiently small $\varepsilon > 0$, the following estimate holds:}}
\textcolor{black}{
\begin{align*}
        &{\small{X_n^2 + \frac 14 \sum_{i=1}^n Y_i^2 \le X_0^2 + C \varepsilon \Delta t \sum_{i=1}^{n-1} X_i^2 
 + C_{\varepsilon} \Delta t h^{2(k+r)} \sum_{i=1}^n\left(\left\|\partial_t \boldsymbol{u}^i\right\|_{H^{k+1}\left(\Omega_f\right)}^2+\left\|\partial_t p^i\right\|_{H^k\left(\Omega_f\right)}^2\right)}}  \\
& +C_{\varepsilon}(\Delta t)^2\left(\left\|\partial_{t t} \boldsymbol{u}\right\|_{L^2\left(0, t_n ; L^2\left(\Omega_f\right)\right)}^2+\left\|\partial_t \boldsymbol{u}\right\|_{L^2\left(0, t_n ; H^1\left(\Omega_f\right)\right)}^2+\left\|\partial_t \boldsymbol{\xi}\right\|_{L^2\left(0, t_n ; H^1\left(\Omega_p\right)\right)}^2
\right.
\\
& \qquad  
 +\left\|\partial_t \phi\right\|_{L^2\left(0, t_n ; H^1\left(\Omega_p\right)\right)}^2
+ \left\|\partial_{t t} \phi\right\|_{L^2\left(0, t_n ; L^2\left(\Omega_p\right)\right)}^2+\left\|\partial_t \phi\right\|_{L^2\left(0, t_n ; H^1\left(\Omega_p\right)\right)}^2
\\
 &\qquad + \left\|\partial_t \boldsymbol{u}\right\|_{L^2\left(0, t_n ; H^1\left(\Omega_f\right)\right)}^2 
+ \left\|\partial_t^2 \boldsymbol{u}\right\|_{L^2\left(t_1, t_n ; L^2(\Gamma)\right)}^2+\left\|\partial_t^3 \boldsymbol{\eta}\right\|_{L^2\left(t_1, t_n ; L^2(\Gamma)\right)}^2 \\
& \qquad
+\left\|\partial_t^3 \boldsymbol{\eta}\right\|_{L^2\left(0, t_n ; L^2\left(\Omega_p\right)\right)}^2
+\left\|\partial_t \boldsymbol{u}\right\|_{L^{\infty}\left(t_{n-1}, t_n ; L^2(\Gamma)\right)}^2+\left\|\partial_t^2 \boldsymbol{\eta}\right\|_{L^{\infty}\left(t_{n-1}, t_n ; L^2(\Gamma)\right)}^2
\\
&\qquad \left. +\left\|\partial_t^3 \boldsymbol{\eta}\right\|_{L^{\infty}\left(t_{n-1}, t_n ; L^2(\Gamma)\right)}^2\right) \\
& +C_{\varepsilon} \Delta t h^{2 k+r} \sum_{j=0}^{n-1}\left(\left\|\boldsymbol{\xi}^j\right\|_{H^{k+1}\left(\Omega_p\right)}^2+\left\|\phi^j\right\|_{H^{k+1}\left(\Omega_p\right)}^2+\left\|\boldsymbol{u}^j\right\|_{H^{k+1}\left(\Omega_f\right)}^2+\left\|p^j\right\|_{H^k\left(\Omega_f\right)}^2\right. \\
& \qquad + \left\|\boldsymbol{\xi}^i\right\|_{H^{k+1}\left(\Omega_p\right)}^2+\left\|\phi^{i-1}\right\|_{H^{k+1}\left(\Omega_p\right)}^2+\left\|\partial_t \phi^i\right\|_{H^{k+1}\left(\Omega_p\right)}^2+\left\|\boldsymbol{u}^{i-1}\right\|_{H^{k+1}\left(\Omega_f\right)}^2
\\
&\qquad \left. +\left\|p^{i-1}\right\|_{H^k\left(\Omega_f\right)}^2\right) \\
& +C_{\varepsilon} h^{2 k+r}\left(\left\|\partial_t \boldsymbol{u}\right\|_{L^2\left(0, t_{n-1} ; H^{k+1}\left(\Omega_f\right)\right)}^2+\left\|\boldsymbol{u}^{n-1}\right\|_{H^{k+1}\left(\Omega_f\right)}^2+\left\|\boldsymbol{u}^0\right\|_{H^{k+1}\left(\Omega_f\right)}^2\right. \\
& \qquad +\left\|\partial_t \boldsymbol{\eta}\right\|_{L^2\left(0, t_n ; H^{k+1}\left(\Omega_p\right)\right)}^2+\left\|\partial_t^2 \boldsymbol{\eta}\right\|_{L^2\left(0, t_n ; H^{k+1}\left(\Omega_p\right)\right)}^2+\left\|\partial_t \phi\right\|_{L^2\left(0, t_n ; H^{k+1}\left(\Omega_p\right)\right)}^2
\\
& \qquad+\left\|\boldsymbol{\xi}^{n-1}\right\|_{H^{k+1}\left(\Omega_p\right)}^2+\left\|\boldsymbol{\xi}^0\right\|_{H^{k+1}\left(\Omega_p\right)}^2+\left\|\phi^1\right\|_{H^{k+1}\left(\Omega_p\right)}^2+\left\|\phi^n\right\|_{H^{k+1}\left(\Omega_p\right)}^2 
\\
& \qquad + \left. \left\|\partial_t \boldsymbol{\eta}\right\|_{L^{\infty}\left(0, t_1 ; H^{k+1}\left(\Omega_p\right)\right)}^2+\left\|\partial_t \boldsymbol{\eta}\right\|_{L^{\infty}\left(t_{n-1}, t_n ; H^{k+1}\left(\Omega_p\right)\right)}^2\right).
\end{align*}
}
The conclusion {\sunny{of the theorem}} follows from {\sunny{discrete Gronwall's inequality, stated in}} Lemma~\ref{lemma:discrete-Gronwall}, by taking $a_n, b_i, c_i, d_i$ and $B$ equal to:

\begin{align*}
a_n &= X_n^2, \quad 
b_i=  \frac{1}{4\Delta t} Y_i^2, \quad
d_i = C_{\varepsilon},
\\
\\
c_i &=  C_{\varepsilon} h^{2(k+r)}\left(\left\|\partial_t \boldsymbol{u}^i\right\|_{H^{k+1}\left(\Omega_f\right)}^2+\left\|\partial_t p^i\right\|_{H^k\left(\Omega_f\right)}^2\right) \\
& +C_{\varepsilon} h^{2 k+r}\left(\left\|\boldsymbol{\xi}^{i-1}\right\|_{H^{k+1}\left(\Omega_p\right)}^2+\left\|\phi^{i-1}\right\|_{H^{k+1}\left(\Omega_p\right)}^2+\left\|\boldsymbol{u}^{i-1}\right\|_{H^{k+1}\left(\Omega_f\right)}^2+\left\|p^{i-1}\right\|_{H^k\left(\Omega_f\right)}^2\right) \\
& +C_{\varepsilon} h^{2 k+r}\left(\left\|\boldsymbol{\xi}^i\right\|_{H^{k+1}\left(\Omega_p\right)}^2+\left\|\phi^{i-1}\right\|_{H^{k+1}\left(\Omega_p\right)}^2+\left\|\partial_t \phi^i\right\|_{H^{k+1}\left(\Omega_p\right)}^2+\left\|\boldsymbol{u}^{i-1}\right\|_{H^{k+1}\left(\Omega_f\right)}^2 \right.
\\
& \qquad\qquad  + \left. \left\|p^{i-1}\right\|_{H^k\left(\Omega_f\right)}^2\right),
\\
\\
B &= {\small{ X_0^2 
 +C_{\varepsilon}(\Delta t)^2\left(\left\|\partial_{t t} \boldsymbol{u}\right\|_{L^2\left(0, t_n ; L^2\left(\Omega_f\right)\right)}^2
 +\left\|\partial_t \boldsymbol{u}\right\|_{L^2\left(0, t_n ; H^1\left(\Omega_f\right)\right)}^2
 +\left\|\partial_t \boldsymbol{\xi}\right\|_{L^2\left(0, t_n ; H^1\left(\Omega_p\right)\right)}^2 
 \right.}}
 \\
 &\qquad \left.
 + \left\|\partial_t \phi\right\|_{L^2\left(0, t_n ; H^1\left(\Omega_p\right)\right)}^2
+ \left\|\partial_{t t} \phi\right\|_{L^2\left(0, t_n ; L^2\left(\Omega_p\right)\right)}^2+\left\|\partial_t \phi\right\|_{L^2\left(0, t_n ; H^1\left(\Omega_p\right)\right)}^2
\right.
\\
& \qquad  +\left\|\partial_t \boldsymbol{u}\right\|_{L^2\left(0, t_n ; H^1\left(\Omega_f\right)\right)}^2 
+ \left\|\partial_t^2 \boldsymbol{u}, \partial_t^3 \boldsymbol{\eta}\right\|_{L^2\left(t_1, t_n ; L^2(\Gamma)\right)}^2+\left\|\partial_t^3 \boldsymbol{\eta}\right\|_{L^2\left(0, t_n ; L^2\left(\Omega_p\right)\right)}^2
\\
& \qquad + \left. \left\|\partial_t \boldsymbol{u}, \partial_t^2 \boldsymbol{\eta}, \partial_t^3 \boldsymbol{\eta}\right\|_{L^{\infty}\left(t_{n-1}, t_n ; L^2(\Gamma)\right)}^2\right) \\
& +C_{\varepsilon} h^{2 k+r}\left(\left\|\partial_t \boldsymbol{u}\right\|_{L^2\left(0, t_{n-1} ; H^{k+1}\left(\Omega_f\right)\right)}^2+\left\|\boldsymbol{u}^{n-1}\right\|_{H^{k+1}\left(\Omega_f\right)}^2+\left\|\boldsymbol{u}^0\right\|_{H^{k+1}\left(\Omega_f\right)}^2\right. \\
& \qquad+\left\|\partial_t \boldsymbol{\eta}, \partial_t^2 \boldsymbol{\eta}, \partial_t \phi\right\|_{L^2\left(0, t_n ; H^{k+1}\left(\Omega_p\right)\right)}^2+\left\|\boldsymbol{\xi}^{n-1}\right\|_{H^{k+1}\left(\Omega_p\right)}^2+\left\|\boldsymbol{\xi}^0\right\|_{H^{k+1}\left(\Omega_p\right)}^2
\\
& \qquad + \left\|\phi^1\right\|_{H^{k+1}\left(\Omega_p\right)}^2+\left\|\phi^n\right\|_{H^{k+1}\left(\Omega_p\right)}^2 
+\left\|\partial_t \boldsymbol{\eta}\right\|_{L^{\infty}\left(0, t_1 ; H^{k+1}\left(\Omega_p\right)\right)}^2
\\
& \qquad \left. +\left\|\partial_t \boldsymbol{\eta}\right\|_{L^{\infty}\left(t_{n-1}, t_n ; H^{k+1}\left(\Omega_p\right)\right)}^2\right).
\end{align*}
\end{proof}

\section{Numerical validation}
\label{sec:numerics}
In this example, we considered a benchmark problem with manufactured solutions to examine the rates of convergence in time and space of the explicit splitting scheme. We solve the time-dependent Stoke-Biot system with added external forcing terms, {\sunny{given by the following}}:
$$
\left\{
\begin{array}{ll}
    \rho_f \partial_t \boldsymbol{u}=\nabla \cdot \boldsymbol{\sigma}_f\left(\boldsymbol{u}, p\right)+\boldsymbol{F}_f & \text { in } \Omega_f \times(0, T), \\
    \nabla \cdot \boldsymbol{u}=g_f & \text { in } \Omega_f \times(0, T), \\
    \partial_t \boldsymbol{\eta}=\boldsymbol{\xi} & \text { in } \Omega_p \times(0, T), \\
    \rho_p \partial_t \boldsymbol{\xi}=\nabla \cdot \boldsymbol{\sigma}_p\left(\boldsymbol{\eta}, \phi\right)+\boldsymbol{F}_e & \text { in } \Omega_p \times(0, T), \\
    \boldsymbol{u_p}=-\mathbb K\nabla \phi & \text { in } \Omega_p \times(0, T), \\
    C_0 \partial_t \phi+\alpha \nabla \cdot \boldsymbol{\xi}-\nabla \cdot (\mathbb K\nabla\phi)=F_d & \text { in } \Omega_p \times(0, T) .
\end{array}
\right.
$$
The FPSI problem is defined within {\sunny{the rectangular domain $\Omega = (0,1) \times (-1,1)$, where the fluid domain occupies the upper half of $\Omega$, i.e., $\Omega_f = (0,1) \times (0,1)$, and the solid domain occupies the lower half of $\Omega$, i.e., $\Omega_p = (0,1) \times (-1,0)$. The exact solution of this problem is given by:}}
\begin{align*}
& \boldsymbol{u}_{exact}=\pi \cos (\pi t)\left[\begin{array}{c}
-3 x+\cos (y) \\
y+1
\end{array}\right], \quad&&p_{exact}=e^t \sin (\pi x) \cos \left(\frac{\pi y}{2}\right)+2 \pi \cos (\pi t), \\
& \boldsymbol{\eta}_{exact}=\sin (\pi t)\left[\begin{array}{c}
-3 x+\cos (y) \\
y+1
\end{array}\right], \quad&&\phi_{exact}=e^t \sin (\pi x) \cos \left(\frac{\pi y}{2}\right).
\end{align*}
From the exact solutions, we can retrieve the corresponding forcing terms of $\boldsymbol{F}_f, g_f, \boldsymbol{F}_e$, and $F_d$:
\begin{equation}
\begin{aligned}
&\boldsymbol{F}_f=\left[\begin{array}{l}
\begin{aligned}
& \rho_f \pi^2 \sin (\pi t)(3 x-\cos y)+\pi e^t \cos (\pi x) \cos \left(\frac{\pi y}{2}\right)+\mu \pi \cos (\pi t) \cos y \\
& -\rho_f \pi^2 \sin (\pi t)(y+1)-\frac{\pi}{2} e^t \sin (\pi x) \sin \left(\frac{\pi y}{2}\right) 
\end{aligned}
\end{array}\right],  \\
&g_f=-2 \pi \cos (\pi t),\\
&\boldsymbol{F}_e=\left[\begin{array}{l}
\begin{aligned}
& \rho_p \pi^2 s(3 x-\cos y)+\alpha \pi e^t \cos (\pi x) \cos \left(\frac{\pi y}{2}\right)+\mu_s s \cos y \\
& -\rho_p \pi^2 s(y+1)-\alpha \frac{\pi}{2} e^t \sin (\pi x) \sin \left(\frac{\pi y}{2}\right) 
\end{aligned} 
\end{array}\right],  \\
&{F}_d=C_0 e^t \sin (\pi x) \cos \left(\frac{\pi y}{2}\right)-2 \alpha \pi \cos (\pi t)+\frac{5}{4} \pi^2 e^t \sin (\pi x) \cos \left(\frac{\pi y}{2}\right).
\end{aligned}
\end{equation}
{\sunny{We set the physical parameters all equal to one:}}
$$
\rho_p=\mu_p=\lambda_p=\alpha=C_0=\gamma=\rho_f=\mu_f=1, \quad \mathbb{K}=\mathbf{I}.
$$
{\sunny{Finite elements are used for spatial discretization. In particular, for the  fluid we use  Taylor-Hood elements $\mathbb{P}_2-\mathbb{P}_1$ for $\left(\boldsymbol{u}, p\right)$.}} For the Biot variables, we employ continuous $\mathbb{P}_2$ elements for the solid displacement $\boldsymbol{\eta}$ and continuous $\mathbb{P}_1$ elements for the Darcy pressure $\phi$, yielding a mixed stable pair. The system is {\sunny{solved on the time interval $(0,T) = (0,1)$, and we evaluate the numerical error at $T = 1$}}. To compute convergence rates, we define the final time errors for {\sunny{structure displacement and velocity $(\boldsymbol{\eta},\boldsymbol{\xi})$, Darcy pressure $\phi$, and fluid velocity and pressure $\left(\boldsymbol{u}, p\right)$, as follows:}}
$$
\begin{aligned}
e_{\boldsymbol{\eta}}  & :=\left\|\boldsymbol{\eta}_h(T)-\boldsymbol{\eta}_{\text {exact }}(T)\right\|_{L^2\left(\Omega_s\right)}, \\
e_{\boldsymbol{\xi}}  & :=\left\|\boldsymbol{\xi}_h(T)-\boldsymbol{\xi}_{\text {exact }}(T)\right\|_{L^2\left(\Omega_s\right)} \\
e_\phi & :=\left\|\phi_h(T)-\phi_{\text {exact }}(T)\right\|_{L^2\left(\Omega_s\right)} \\
e_{\boldsymbol{u}} & :=\left\|\boldsymbol{u}_{f, h}(T)-\boldsymbol{u}_{\text {exact }}(T)\right\|_{L^2\left(\Omega_f\right)} \\
e_p & :=\left\|p_{f, h}(T)-p_{\text {exact}}(T)\right\|_{L^2\left(\Omega_f\right)} .
\end{aligned}
$$
To determine the temporal convergence rate, we select the following time and space discretization parameters for various values of $n$ ranging from 4 to 128:
$$
\left\{\Delta t, \Delta x\right\} = \left\{\frac{0.05}{n}, \frac{0.5}{n}\right\},
$$
where $\Delta x$ represents the mesh size. The computed error data and corresponding convergent rates are reported in Table \ref{convergence1}. The result indicates that our partitioned method achieves first-order accuracy in time for both the Stokes and Biot variables without requiring {\sunny{any subiterations}}.

\begin{table}[h!]
    \caption{Temporal convergence at $T=1$. All variables exhibit first-order accuracy}
   \centering
    \begin{tabular}{c|c|c|c|c|c}
    \hline
        $n$ & $e_{\boldsymbol{\eta}}$& $e_{\boldsymbol{\xi}}$& $e_{\phi}$& $e_{\boldsymbol{u}}$& $e_{p}$\\
        \hline
       8 & 8.49E-02 & 6.36E-02 & 6.60E-03 & 7.24E-03 & 1.06E-01\\
      16 & 4.29E-02 & 3.21E-02 & 3.12E-03 & 3.67E-03 & 5.32E-02\\
      32 & 2.16E-02 & 1.61E-02 & 1.53E-03 & 1.85E-03 & 2.66E-02\\
      64 & 1.08E-02 & 8.08E-03 & 7.56E-04 & 9.29E-04 & 1.33E-02\\
 \hline
    \end{tabular}
    \label{convergence1}
\end{table}
Regarding the spatial convergence rate, a third-order accuracy in the $L^2$ norm is observed in space for velocity and displacement, and a second-order accuracy is observed for pressure, as illustrated in Table \ref{convergence3}. {\sunny{These are optimal spatial convergence rates consistent with the chosen approximation spaces.}}
\begin{table}[h!]
    \caption{Spatial errors at $T_0=10^{-4}$. We set $\Delta t=10^{-7}$ and refine with $h= 1 / n$ to assess pure spatial convergence.}
   \centering
    \begin{tabular}{c|c|c|c|c|c}
    \hline
        $n$ & $e_{\boldsymbol{\eta}}$& $e_{\boldsymbol{\xi}}$& $e_{\phi}$& $e_{\boldsymbol{u}}$& $e_{p}$\\
        \hline
       8 & 1.37E-03& 3.77E-03& 6.96E-03& 3.21E-03& 1.83E-02\\
      16 & 6.83E-04& 9.43E-04& 1.75E-03& 7.97E-04& 5.69E-03\\
      32 & 3.42E-04& 2.35E-04& 4.39E-04& 1.99E-04& 1.88E-03\\
      64 & 1.71E-04& 5.81E-05& 1.10E-04& 4.95E-05& 6.41E-04\\
      \hline
    \end{tabular}   
    \label{convergence3}
\end{table}

\section{Conclusion}
\label{sec:main6}
In this paper, we develop a rigorous \emph{a priori} error analysis for a fully discrete, parallelizable, explicit loosely coupled scheme for the Stokes--Biot problem \cite{24M1695713}. The key advantage of the method is that, at each time step, the fluid and poroelastic subproblems can be solved \emph{independently}, while \emph{stability} and \emph{provable convergence} are maintained through a consistent and stable enforcement of the interface conditions. In particular, the tangential Beaver--Joseph--Saffman slip condition is stabilized with a slip parameter $\gamma>0$, which plays a penalty-like role, and the dynamic coupling condition (relating normal stress and pressure) is stabilized through a penalty parameter $L$. A further important feature is the use of \emph{pore pressure} in the coupling conditions to replace stress terms, which improves the robustness of the explicit splitting without compromising consistency.

Our error analysis is based on a discrete energy framework. We introduce Ritz-type projections in each subdomain so that, upon subtracting the discrete scheme from the time-discrete continuous formulation, the dominant interpolation contributions cancel within the principal bilinear forms. This leads to reduced error equations in which the remaining consistency terms arise primarily from (i) time discretization residuals and (ii) lagged interface data introduced by the explicit splitting. 
The main result of this manuscript is the derivation of a discrete error energy identity, and establishment of unconditional error estimates in a combined energy-dissipation norm via a Gronwall--type argument. 
The estimate shows that the method is \emph{$1$st-order accurate in time} and achieves the expected \emph{spatial convergence rate} determined by the polynomial degree of the finite element spaces. To support the theory, we present numerical experiments based on a manufactured solution. The computations confirm \emph{$1$st-order temporal convergence} for all variables under the explicit loosely coupled scheme, and mesh-refinement studies demonstrate spatial convergence rates consistent with the chosen approximation spaces.


\section*{Acknowledgement}
{\sunny{
\v{C}ani\'{c}'s research has been supported in part by the
National Science Foundation under grants DMS-2408928, DMS-2247000 and by the  U.S. Department of Energy, Office of Science, Office of Advanced Scientific Computing Research's Applied Mathematics Competitive Portfolios program under Contract No. AC02-05CH11231.
Wang’s research has been supported in part by the National Science Foundation under grant DMS-2247001 and by Simons Foundation Travel Award.}}

\bibliographystyle{siamplain}

\bibliography{merged_references_clean}
\end{document}